\documentclass[11pt,reqno]{amsart}

\usepackage[margin=1.15in]{geometry}
\usepackage{amsmath,amssymb,amsthm,mathtools,mathrsfs}
\usepackage{bbm}
\usepackage{enumerate}
\usepackage{cite}
\usepackage[dvipsnames]{xcolor}
\usepackage[colorlinks=true,linkcolor=blue,citecolor=blue,urlcolor=blue]{hyperref}
\usepackage[nameinlink,noabbrev]{cleveref}
\usepackage{microtype}

\numberwithin{equation}{section}

\newtheorem{theorem}{Theorem}[section]
\newtheorem{proposition}[theorem]{Proposition}
\newtheorem{lemma}[theorem]{Lemma}
\newtheorem{corollary}[theorem]{Corollary}
\newtheorem{remark}[theorem]{Remark}
\newtheorem{definition}[theorem]{Definition}

\newcommand{\eps}{\varepsilon}
\newcommand{\R}{\mathbb R}
\newcommand{\E}{\mathbb E}

\newcommand{\Law}{\operatorname{Law}}
\newcommand{\Tr}{\operatorname{Tr}}
\newcommand{\Id}{\mathbb I}

\newcommand{\cN}{\mathcal N}
\newcommand{\dif}{\mathrm d}

\def\a{\alpha}
\def\b{\beta}
\def\p{\partial}
\def\d{\delta}
\def\g{\gamma}

\def\no{\nonumber}
\def\cA{{\mathcal A}}

\def\cD{{\mathcal D}}

\def\cH{{\mathcal H}}

\def\cP{{\mathcal P}}
\def\cQ{{\mathcal Q}}
\def\cR{{\mathcal R}}
\def\cS{{\mathcal S}}

\def\cU{{\mathcal U}}
\def\cV{{\mathcal V}}

\def\mI{{\mathbb I}}

\def\sI{{\mathscr I}}

\def\sL{{\mathscr L}}

\def\sR{{\mathscr R}}
\def\sS{{\mathscr S}}

\def\sU{{\mathscr U}}

\def\bt{\begin{theorem}}
\def\et{\end{theorem}}
\def\bl{\begin{lemma}}
\def\el{\end{lemma}}
\def\br{\begin{remark}}
\def\er{\end{remark}}
\def\bd{\begin{definition}}
\def\ed{\end{definition}}
\def\bp{\begin{proposition}}
\def\ep{\end{proposition}}
\def\bc{\begin{corollary}}
\def\ec{\end{corollary}}

\newcommand{\ip}[2]{\left\langle #1,#2\right\rangle}
\newcommand{\norm}[1]{\left|#1\right|}

\allowdisplaybreaks

\begin{document}

\title{Central limit theorem for the Smoluchowski--Kramers approximation}

\date{}

\author{Yun Li, Longjie Xie$^\dag$, Li Yang and Qianning Zhang}
\address{Yun Li:
School of Mathematics and Statistics, Jiangsu Normal University,
Xuzhou, Jiangsu 221000, P.R.China\\
Email: liyun@jsnu.edu.cn
}

	\address{Longjie Xie:
		School of Mathematics and Statistics, Jiangsu Normal University,
		Xuzhou, Jiangsu 221000, P.R. China\\
		Email: longjiexie@jsnu.edu.cn  }

\address{Li Yang:
	School of Mathematical Sciences, Tiangong University,
	Tianjin 300387, P.R.China\\
	Email: lyang@tiangong.edu.cn
}

	\address{Qianning Zhang:
		School of Mathematics and Statistics, Jiangsu Normal University,
		Xuzhou, Jiangsu 221000, P.R. China\\
		Email: zhangqianning@jsnu.edu.cn  }


\thanks{This work is supported by National Key R\&D program of China (No. 2023YFA1010103) and NSFC (No. 12301179, 12401580).
}

\begin{abstract}
We establish the central limit theorem (CLT) for Smoluchowski--Kramers
approximation of Langevin equations with state-dependent,
possibly non-symmetric friction and multiplicative noise. To the best of our knowledge, this is the  first such result, even in the case of constant friction and additive noise.
The result differs in an essential way from  classical  averaging CLTs
and reveals a   new two-scale fluctuation phenomenon.
 The normalized difference process
retains a persistent fast component and is generally  not
tight in \(C([0,T];\R^d)\), thus  the usual continuous-path CLT cannot hold.
We derive an exact  decomposition of the
normalized difference into an accumulated slow fluctuation and a rapidly relaxing
velocity correction. The auxiliary slow fluctuation converges to a
linear diffusion driven by both the original Brownian motion and   an
additional independent Brownian noise whose effective covariance is characterized explicitly through  a quadratic Poisson corrector. For every fixed \(t>0\), the fast correction converges conditionally to
its local Gaussian equilibrium. Consequently, the  fixed-time fluctuation limit  is identified as a Gaussian mixture:
\[
\frac{\bar X_t-X_t^\varepsilon}{\sqrt{\varepsilon}}
\Longrightarrow
\bar Y_t+N^{1/2}(\bar X_t)\xi,
\]
where $\bar X_t$ is the overdamped limit, \(\bar Y_t\) is the limiting slow fluctuation, \(N\) is the covariance of the terminal  velocity correction and solves a Lyapunov equation, and \(\xi\) is a standard
Gaussian random variable independent of \((\bar X_t,\bar Y_t)\).
A quantitative  convergence rate is also provided.
The result fills the remaining gap in the asymptotic theory of the overdamped approximation at the central limit scale, complementing the existing law of large numbers, moderate deviation and large deviation results.

		\bigskip
		
		\noindent {{\bf AMS 2020 Mathematics Subject Classification:} 60H10; 60F05; 82C31 }
		
		\bigskip
		\noindent{{\bf Keywords:}  Smoluchowski--Kramers approximation; small mass limit; central limit theorem; Langevin equations}

\end{abstract}

\maketitle

\section{Introduction}

\subsection{Background and the  problem}

The Smoluchowski--Kramers (S--K) approximation is a fundamental reduction
principle for stochastic dynamics with inertia:
when the mass of a particle is small, its velocity relaxes on a
much shorter time scale than its position, and the position is
approximated by an overdamped diffusion.
This reduction underlies many models of Brownian motion and
provides a tractable description of systems whose inertial
dynamics would otherwise require a phase-space formulation. We refer to \cite{F,PS,FW,CFAMZ} for a comprehensive overview.

\smallskip
Consider the following stochastic Langevin equation in $\R^d$ (\(d\geq1\)):
\begin{equation}\label{eq:Langevin}
\eps \ddot X_t^\eps
=
F(X_t^\eps)
-\gamma(X_t^\eps)\dot X_t^\eps
+\sigma(X_t^\eps)\dot W_t,
\qquad
X_0^\eps=x,\quad \dot X_0^\eps=v,
\end{equation}
where \(F:\R^d\to\R^d\) is an external force,
\(\gamma:\R^d\to\R^{d\times d}\) is a possibly non-symmetric friction
matrix,  \(\sigma:\R^d\to\R^{d\times d}\) is the noise coefficient, and \(W=(W_t)_{t\geq0}\) is a standard
\(d\)-dimensional Brownian motion. The parameter $\eps>0$ represents the mass of the particle.
Under suitable assumptions, the position
\(X_t^\varepsilon\) converges, as \(\varepsilon\to0\), to the
solution of the overdamped  equation
\begin{equation}\label{eq:overdamped}
\dif\bar X_t
=
B(\bar X_t)\dif t+a(\bar X_t)\dif W_t,
\qquad
\bar X_0=x,
\end{equation}
where
\begin{equation}\label{eq:defBa}
a(x):=\gamma^{-1}(x)\sigma(x),
\qquad
B(x):=\gamma^{-1}(x)F(x)+\cS(x).
\end{equation}
The additional vector field \(\cS\) is known as the noise-induced drift whose components are given by
\begin{equation}\label{eq:def-S}
\cS_i(x)
=
\sum_{j,k=1}^d
\partial_{x_j}\big(\gamma^{-1}\big)_{ik}(x)M_{kj}(x),
\qquad i=1,\ldots,d,
\end{equation}
where \(M(x)\) is the unique symmetric positive definite solution of
the Lyapunov equation
\begin{equation}\label{eq:lyapM}
\gamma(x)M(x)+M(x)\gamma^*(x)
=
\sigma(x)\sigma^*(x).
\end{equation}
More precisely, for any $T>0$ and $q\geq 1$, we have
\begin{equation}\label{eq:skrate}
\sup_{0\leq t\leq T}
\E\norm{X_t^\eps-\bar X_t}^q
\leq C_{q,T}\,\eps^{q/2}.
\end{equation}
Thus, the overdamped equation gives the functional law of large
numbers (LLN). This first-order approximation has been studied in a variety of finite and infinite dimensional settings; see, among others,
\cite{BC25,CF,HAN24,HMVW,HVW,SAL19,CFAM,BW20}.  Note that the noise-induced drift in (\ref{eq:def-S})
vanishes  for constant friction. The Poisson equation approach connects these results to
diffusion approximation for fully coupled multiscale systems;
see, e.g., \cite{HP2,PV,PS,RX,KM}. Extensions to irregular
coefficients or vanishing friction \cite{XY,FHW13}, Hamiltonian systems \cite{BW18,BW}, and geometric settings \cite{BHVW17,FGL15} further
demonstrate the robustness of the first-order approximation.

\smallskip
The first-order theory determines the effective motion of the position. Meanwhile,   the fluctuation theory has been developed extensively at the moderate and large deviation scales; see, e.g., \cite{CLW,MS,QY22} for the moderate deviation  principle (MDP) and \cite{CS,HS17,S13} for the large deviation  principle (LDP) of the S--K approximation. However, the central limit theorem (CLT) at the typical  fluctuation scale \(\sqrt{\eps}\) has remained unknown, even when both friction and noise coefficient are state independent. Define
the normalized difference process
\begin{equation}\label{eq:defZ}
Z_t^\eps
:=
\frac{\bar X_t-X_t^\eps}{\sqrt{\eps}}.
\end{equation}
Identifying the limit of $Z_t^\eps$ is
the pivotal level in the hierarchy
\[
\text{LLN}\ \longrightarrow\ \text{CLT}\ \longrightarrow\ \text{MDP}\ \longrightarrow\ \text{LDP}.
\]
It  quantifies the leading stochastic error made by the
overdamped approximation and  provides the local quadratic information underlying both moderate and large deviation rate functions.

\smallskip
This problem differs in an essential way  from the usual CLT problem
for averaging (see, e.g., \cite{Ce,HP,LPSY,RX2021,DL,DLPV}). A direct analogy with normal deviations for averaging would suggest
that \(Z_t^\varepsilon\) converges to a diffusion obtained by linearizing
\eqref{eq:overdamped}. This is not correct.  The main obstacle is that the normalized difference
\(
Z_t^\varepsilon
\)
not only contains an accumulated position
correction, but  also retains a non-vanishing fast velocity fluctuation (see our decomposition (\ref{eq:exact-decomposition-intro}) below).
This fast
component has order-one amplitude and correlation time of order
\(\varepsilon\),
it disappears under   macroscopic time
averaging but survives at every fixed positive time.
Thus \(Z^\varepsilon\) is generally not tight in
\(C([0,T];\mathbb R^d)\), and a standard continuous-path CLT fails.
The correct second-order description must therefore account for both the accumulated   fluctuation of the position dynamics and the local equilibrium of the rapidly relaxing velocity. The purpose of this paper is to identify these two contributions, their interaction through the common noise, and the resulting fixed-time limiting distribution.

\smallskip
Our result completes the overdamped limit theory at the typical fluctuation level and reveals  a  two-scale
fluctuation  mechanism that distinguishes the
S--K approximation from classical averaging: in the small mass limit, the fast thermal motion is not completely averaged out at the level of typical fluctuations, it leaves a residual Gaussian imprint whose   center is shifted by the slow fluctuation. Thus the S--K approximation, while exact at the LLN level, acquires a two-component structure at the fluctuation scale.

\subsection{Exact fluctuation decomposition and the limiting distribution}

Introduce the rescaled velocity
\begin{equation*}
V_t^\eps:=\sqrt{\eps}\,\dot X_t^\eps.
\end{equation*}
Then equation \eqref{eq:Langevin} is   equivalent to
\begin{equation}\label{eq:xv}
\left\{
\begin{aligned}
\dif X_t^\eps
&=\frac1{\sqrt{\eps}}V_t^\eps\,\dif t,
\\
\dif V_t^\eps
&=\frac1{\sqrt{\eps}}F(X_t^\eps)\,\dif t
-\frac1\eps\gamma(X_t^\eps)V_t^\eps\,\dif t
+\frac1{\sqrt{\eps}}\sigma(X_t^\eps)\,\dif W_t,
\end{aligned}
\right.
\end{equation}
with $X_0^\eps=x$ and $V_0^\eps=\sqrt{\eps}v$.
For a frozen position $x$, the leading velocity dynamics of (\ref{eq:xv}) is an
Ornstein--Uhlenbeck process satisfying
\begin{equation}\label{eq:frozenv}
\dif V_t^x=-\gamma(x)V_t^x\dif t+\sigma(x)\dif W_t.
\end{equation}
Its unique invariant measure is given by
\[
\zeta^x=\cN(0,M(x)),
\]
where \(M(x)\) is determined by \eqref{eq:lyapM}.
The process \(V_t^\varepsilon\) fluctuates on the time scale $\eps$ with
amplitude of order one.
Hence the corresponding velocity correction to the position has
size $\sqrt{\eps}$ and  remains visible in
\eqref{eq:defZ}.

\smallskip
The persistent part of the position error is captured by the linear
Poisson corrector
\[
\Gamma(x,v):=\gamma^{-1}(x)v.
\]
Let $\mathscr L_0(x)$ be the generator of the frozen velocity $V_t^x$, i.e.,
\begin{equation}\label{eq:L0}
\mathscr L_0(x)
=
-\langle\gamma(x)v,\nabla_v\rangle
+\frac12\sigma(x)\sigma^*(x):\nabla_{v}^2.
\end{equation}
Then we have
\[
\mathscr L_0(x)\Gamma(x,v)=-v.
\]
Applying It\^o's formula to
\(\Gamma(X_t^\varepsilon,V_t^\varepsilon)\) yields the exact
decomposition (see Lemma \ref{lem:decz} for the detailed proof)
\begin{equation}\label{eq:exact-decomposition-intro}
Z_t^\varepsilon
=
Y_t^\varepsilon
+\Gamma(X_t^\varepsilon,V_t^\varepsilon)
-\sqrt{\varepsilon}\,\Gamma(x,v),
\end{equation}
where $Y_t^\eps$ is the relative slow fluctuation part given by
\begin{align}\label{eq:defY}
Y_t^\eps&=\frac{1}{\sqrt{\eps}}
\int_0^t\big[B(\bar X_s)-B(X_s^\eps)\big]\dif s
+\frac{1}{\sqrt{\eps}}
\int_0^th(X_s^\eps,V_s^\eps)\dif s\no\\
&\quad+\frac{1}{\sqrt{\eps}}\int_0^t\big[a(\bar X_s)-a(X_s^\eps)\big]\dif W_s.
\end{align}
Here, $B$ and $a$ are defined by \eqref{eq:defBa}, and
for \(i=1,\dots,d\),
\begin{equation}\label{eq:defh}
h_i(x,v):=\cS_i(x)-\sum_{j,k=1}^d\partial_{x_j}(\gamma^{-1})_{ik}(x)v_kv_j.
\end{equation}
Identity \eqref{eq:exact-decomposition-intro} is the  starting point of
our analysis. The process \(Y_t^\varepsilon\) evolves
on the macroscopic time scale, whereas the term
\(\Gamma(X_t^\varepsilon,V_t^\varepsilon)\) remains of order one and
relaxes on the fast time scale. The decomposition isolates this term
before taking a limit and thereby separates the accumulated slow
fluctuation from the instantaneous local equilibrium.

\smallskip
We shall prove that  \(Y_t^\eps\) converges to the solution
of the linear SDE
\begin{equation}\label{eq:limitY}
\dif\bar Y_t=\nabla B(\bar X_t)\bar Y_t\,\dif t+\nabla a(\bar X_t)[\bar Y_t]\,\dif W_t+\sqrt{\cQ(\bar X_t)}\dif\tilde W_t,\quad \bar Y_0=0,
\end{equation}
where \(\tilde W\) is a \(d\)-dimensional Brownian motion
independent of \(W\).
This equation describes the accumulated position
fluctuation. The first two terms in \eqref{eq:limitY} are the linearization of the drift and diffusion coefficients of
the
overdamped equation along \(\bar X_t\), while the additional independent Brownian noise
records the  effect of centered velocity fluctuations.
The covariance \(\cQ\)  of this additional noise is not obtained by simply averaging separate
variance contributions. Indeed, two fast martingales exist: one arises from the linearization of the
common-noise coefficient, and the other from a quadratic Poisson
corrector for the centered velocity drift. Their cross-variation must
be retained before averaging, see  formula (\ref{eq:defQ}) below for the precise definition.
The fast term in
\eqref{eq:exact-decomposition-intro}  has a different
limiting behavior. We shall show that, at every fixed \(t>0\), it equilibrates
conditionally on the slow state and converges to a centered Gaussian
variable with covariance
\[
N(x)
:=
\gamma^{-1}(x)M(x)\gamma^{-*}(x).
\]

\smallskip
By combining the slow diffusion approximation with a conditional local-equilibrium estimate, we obtain an exact characterization of the fixed-time fluctuation limit, namely,
\[
Z_t^\eps
\Longrightarrow
Z_t:=\bar Y_t+N^{1/2}(\bar X_t)\xi,
\]
where \(\xi\sim\cN(0,\Id_d)\) is independent of
\((\bar X_t,\bar Y_t)\).  Equivalently, conditionally on the limiting slow state,
\[
\operatorname{Law}
\bigl(Z_t\mid\bar X_t,\bar Y_t\bigr)
=
\mathcal N\bigl(\bar Y_t,N(\bar X_t)\bigr).
\]
The limit is therefore a Gaussian
mixture.
Under \(C^{1+\beta}\)-type regularity  assumptions on the coefficients,  we obtain that for every $t\in(0,T]$,
\[
\begin{aligned}
&\left|
\E\varphi(Z_t^\eps)
-
\E\left(\int_{\R^d}
\varphi(z)
\,\mu^{\bar X_t,\bar Y_t}(\dif z)\right)
\right|\leq
C_T\,\eps^{\beta/2}
+
C_0\,\mathrm e^{-\kappa t/\eps},
\end{aligned}
\]
where for every fixed $(x,y)$,
\[
\mu^{x,y}:=\cN(y,N(x)).
\]
The algebraic part $\eps^{\beta/2}$ measures the accuracy of the fluctuation
approximation under the stated coefficient regularity. The exponential term is an unavoidable initial layer: at \(t=0\),
the fast fluctuation starts from a deterministic state and has not yet
reached its local equilibrium. Relaxation occurs on the scale $\eps$, and  reaching the algebraic
accuracy uniformly requires times of order
$\eps\log(1/\eps)$.

\smallskip
The result   has several   consequences for the fluctuation theory
of the overdamped approximation.

\vspace{1mm}
\begin{enumerate}[{\it (i)}]
\item It identifies the correct CLT for the S--K approximation. As far as we know, this is the first such result even for   constant friction and additive noise. It completes the limit theory of small mass Langevin dynamics, which previously consisted of a LLN, moderate deviations  and large deviations. Moreover, it has direct implications for thermodynamic fluctuations in heterogeneous media, exit-time statistics, and   rare event analysis, see, e.g., \cite{CS,F}.

\smallskip
\item  It shows that the S--K  CLT for Langevin dynamics  is fundamentally different from the classical  averaging CLT for   fast-slow systems.

\begin{itemize}
\item First, in a standard averaging problem, the normalized slow  difference often converges on path-space to an effective diffusion obtained by linearizing the averaged equation. Here the physical position difference contains an additional fast term that disappears under macroscopic time averaging but remains observable at each fixed time. Thus no continuous-path CLT can
hold.
This is not a technical defect of the proof but an intrinsic feature of the small mass fluctuation problem.

\vspace{1mm}
    \item Second, the overdamped limit $\bar X_t$  alone does not contain enough information to describe the typical fluctuations, one must also track the auxiliary process $Y_t^\eps$ introduced in (\ref{eq:defY}). Its limit
\(\bar Y_t\)  encodes how state-dependent friction, multiplicative noise, and the noise-induced drift jointly propagate slow fluctuations. In particular, the terminal limit is a  coupled diffusion pair \((\bar X_t,\bar Y_t)\) with a conditionally Gaussian correction. This is the fluctuation-level counterpart of the well-known  noise-induced drift phenomenon: at the LLN level, the state dependence of $\gamma$ and $\sigma$ modifies the effective drift through $\cS$; at the CLT level, the same state dependence modifies both the center and the covariance of the fluctuation distribution.
\end{itemize}

\smallskip
\item It provides a quantitative  second-order reduction of the inertial dynamics. The decomposition \eqref{eq:exact-decomposition-intro}, together with the limiting description, yields the second-order approximation
\[
X_t^\eps\;\approx\;\bar X_t-\sqrt{\eps}\,\big(\bar Y_t+N^{1/2}(\bar X_t)\xi\big),
\]
in which the linear fluctuation equation and the local velocity equilibrium play complementary roles. The error estimate of order \(\varepsilon^{\beta/2}\), together with the exponentially decaying initial-layer term and the explicit formulas for the covariances \(\cQ\) and $N$, shows that the two-scale fluctuation structure emerges on a time scale of order \(\varepsilon\) and is stable and computable. This is relevant for uncertainty quantification and for the design of second-order numerical  schemes for  Langevin  systems.

\end{enumerate}

\subsection{Notation and organization}
Throughout this paper,
for \(m\in\R^d\) and a symmetric positive semidefinite matrix
\(\Sigma\in\R^{d\times d}\), we use \(\cN(m,\Sigma)\) to denote  the
\(d\)-dimensional Gaussian distribution with mean \(m\) and covariance
\(\Sigma\). In particular, \(\cN(0,\Id_d)\) is the standard Gaussian
distribution in \(\R^d\), and we write \(X\sim\cN(m,\Sigma)\) if \(X\) has this
distribution.
Weak convergence of random variables is
written as
\(
X_n\Rightarrow X.
\)

The transpose of a matrix \(A\) is denoted by \(A^*\), and
\(A^{-*}:=(A^{-1})^*\).
For matrices \(A\) and \(B\) of the same size, we write
\[
A:B:=\operatorname{Tr}(AB^*).
\]
For simplicity, the Euclidean norm and the corresponding matrix norm are both denoted
by \(|\cdot|\). For a matrix-valued function \(a\),
\(
\nabla a(x)[y]
\)
denotes its directional derivative at \(x\) in the direction \(y\), i.e.,
\begin{equation}\label{eq:day}
\big(\nabla a(x)[y]\big)_{ij}=\sum_{k=1}^d\p_{x_k}a_{ij}(x)y_k.
\end{equation}

\smallskip
Given a function space, the subscript \(b\) will stand for boundedness, while
the subscript \(p\) stands for polynomial growth in \(y\) and \(v\). More precisely, for a
function \(f(t,z,x,y,v) \in L^\infty_p:= L^\infty_p\bigl(\R_+ \times\R^{4d}\bigr)\),
we mean that there exist constants $C,p>0$ such that
\[
\|f(\cdot,\cdot,\cdot,y,v)\|_\infty:=\sup_{t\in\R_+}\sup_{z,x\in\R^d}|f(t,z,x,y,v)|
\leq C\big(1+\norm{y}^p+\norm{v}^p\big).
\]
For \(0<\a_1,\a_2,\a_3,\a_4,\a_5\leq1\), the space \(C_p^{\a_1,\a_2,\a_3,\a_4,\a_5}:=C^{\a_1,\a_2,\a_3,\a_4,\a_5}_{t,z,x,y,v}(\R_+ \times\R^{4d})\) consists of all functions \(f\) satisfying the indicated H\"{o}lder regularity separately in each  variable, with polynomial growth in \(y\) and \(v\). More precisely, there exist constants $C,p>0$ such that
\begin{align*}
&|f(t,z,x_1,y,v)-f(t,z,x_2,y,v)|
\leq C\big(1+\norm{y}^p+\norm{v}^p\big)|x_1-x_2|^{\a_3}, \\
&|f(t,z,x,y_1,v)-f(t,z,x,y_2,v)|
\leq C\big(1+\norm{y_1}^p+\norm{y_2}^p+\norm{v}^p\big)|y_1-y_2|^{\a_4},
\end{align*}
and the analogous conditions hold separately in the variables \(t,z,v\).
For \(\a_1,\a_2,\a_3,\a_4,\a_5>1\), the space \(C_p^{\a_1,\a_2,\a_3,\a_4,\a_5}:=C^{\a_1,\a_2,\a_3,\a_4,\a_5}
_{t,z,x,y,v}(\R_+ \times\R^{4d})\) consists of all functions
\(f\) satisfying the indicated anisotropic derivatives with polynomial growth in \(y\) and \(v\), and
\[
\p_t^{k_1}\nabla_z^{k_2}\nabla_x^{k_3}
\nabla_y^{k_4}\nabla_v^{k_5}f\in C_p^{\a_1-\lfloor\a_1\rfloor,\a_2-\lfloor\a_2\rfloor,\a_3-\lfloor\a_3\rfloor,\a_4-\lfloor\a_4\rfloor,
\a_5-\lfloor\a_5\rfloor},
\]
where \(\lfloor\a_1\rfloor\) is the integral part of \(\a_1\), \(k_i\leq\lfloor\a_i\rfloor\) for \(1\leq i\leq5\) and \(k_1+k_2+k_3+k_4+k_5\leq\max_{1\leq i\leq5}{\lfloor\a_i\rfloor}\).
Replacing the subscript \(p\) by \(b\) indicates that the polynomial
growth condition is replaced by uniform boundedness. In particular,
all the indicated derivatives and H\"{o}lder regularity are uniformly
bounded in all variables.

\smallskip
The rest of the paper is organized as follows. In Section~\ref{sec:result}, we state the assumptions, construct the effective covariances and present the main theorem. Section~\ref{sec:est} establishes the exact decomposition and the preliminary estimates. Section~\ref{sec:slowcon} develops the fluctuation estimates and the joint limit for the slow pair. Section~\ref{sec:fast} proves the terminal local-equilibrium estimate. Section~\ref{sec:proof-main} combines these results to establish the quantitative fixed-time CLT.

\section{Assumptions and main result}\label{sec:result}

Throughout the paper, we impose the following assumptions.

\medskip
\noindent
{\bf(A1)}
There exist constants \(\lambda_\gamma,\lambda_\sigma>0\) such that
for every \(x,\xi\in\R^d\),
\begin{equation*}
\ip{\frac{\gamma(x)+\gamma^*(x)}{2}\xi}{\xi}
\geq\lambda_\gamma\norm{\xi}^2,
\qquad
\ip{\sigma(x)\sigma^*(x)\xi}{\xi}
\geq\lambda_\sigma\norm{\xi}^2.
\end{equation*}

\smallskip
\noindent
{\bf(A2)} For \(\beta\in(0,1]\), the coefficients satisfy
$
F\in C_b^{1+\beta}$ and $
\sigma, \gamma\in C_b^{2+\beta}$.

\begin{remark}
(i) The regularity in \textbf{(A2)} is dictated by the derivatives appearing in the effective drift of the limit equations. In particular, the noise-induced drift $\mathcal S$ contains $\nabla(\gamma^{-1})$, and its linearization contains $\nabla\mathcal S$, hence second derivatives of $\gamma$ are needed. Under {\bf(A1)}--{\bf(A2)} the effective coefficients satisfy $B,a\in C_b^{1+\beta}$, so the derivatives appearing in (\ref{eq:limitY}) are bounded and $\beta$-H\"older continuous. This regularity determines the algebraic rate $\eps^{\beta/2}$ in the weak error estimate.

(ii) The boundedness assumptions are imposed for simplicity of presentation. More generally, coefficients with growth can be treated under appropriate local regularity and growth conditions, provided that the systems are well posed and the moment estimates stated  Lemma \ref{lem:monest} hold; see e.g., \cite{HVW,XZ} for related assumptions.
\end{remark}

\subsection{The two effective covariances}

We first construct the two covariance matrices that determine the limiting fluctuation.

Recall that for each frozen position $x$, the velocity equation \eqref{eq:frozenv} admits an invariant measure $\zeta^x=\mathcal N(0,M(x))$, where $M(x)$ solves \eqref{eq:lyapM}. Define
\begin{equation}\label{eq:def-N}
N(x):=\gamma^{-1}(x)M(x)\gamma^{-*}(x).
\end{equation}
Then $N(x)$ is the unique symmetric positive definite solution of the Lyapunov equation
\begin{equation*}
\gamma(x)N(x)+N(x)\gamma^*(x)=a(x)a^*(x).
\end{equation*}
For $x,y\in\R^d$,  let us define the Gaussian kernel
\begin{equation}\label{muxy}
\mu^{x,y}:=\mathcal N(y,N(x)).
\end{equation}
One can check that it is the unique invariant measure of the frozen Ornstein--Uhlenbeck equation
\begin{equation}\label{eq:forz}
\dif Z_t^{x,y}
=
-\gamma(x)(Z_t^{x,y}-y)\dif t+a(x)\dif W_t.
\end{equation}
In the limiting description, the parameter $y$ represents the accumulated slow fluctuation, while \(N(x)\) is the covariance of the local Gaussian correction
that remains in the normalized position difference after the slow
fluctuation has been separated. This family $\mu^{x,y}$ will describe the conditional
fixed-time limit of the full position fluctuation.

The covariance of the accumulated slow fluctuation is obtained through a quadratic corrector.
Note that by \eqref{eq:def-S} and \eqref{eq:lyapM}, the function $h$, defined in \eqref{eq:defh}, is centered with respect to the frozen velocity equilibrium $\zeta^x$:
\begin{equation*}
\int_{\R^d}h(x,v)\,\zeta^x(\dif v)=0.
\end{equation*}
For $i=1,\dots,d$, define the symmetric matrix $\sS_i(x)$ by
\begin{equation*}
\sS_i(x):=\frac12\left(\nabla\left(\gamma_i^{-1}\right)(x)
+\left(\nabla\left(\gamma_i^{-1}\right)(x)\right)^*
\right),
\end{equation*}
where \(\gamma_i^{-1}\) denotes the \(i\)-th row of
\(\gamma^{-1}\). Let \(P_i(x)\) be the unique symmetric solution of
\begin{equation}\label{eq:lyapP}
\gamma^*(x)P_i(x)+P_i(x)\gamma(x)=\sS_i(x),
\end{equation}
and set
\[
\Phi_i(x,v)
:=
-v^*P_i(x)v+\Tr\big(P_i(x)M(x)\big).
\]
Then the vector $\Phi=(\Phi_1,\dots,\Phi_d)^*$ is centered under $\zeta^x$ and solves the quadratic Poisson equation
\begin{equation}\label{poissh}
\sL_0(x)\Phi(x,v)=-h(x,v).
\end{equation}
Indeed, by (\ref{eq:lyapM}) and (\ref{eq:lyapP}) we have $\Tr\big(P_i(\sigma\sigma^*)\big)=\Tr(\sS_iM)=\cS_i$, and hence
\begin{align*}
\sL_0(x)\Phi_i(x,v)&=2v^*P_i(x)\g(x)v-\Tr(P_i(x)\sigma(x)\sigma^*(x))\\
&=v^*\big(P_i\g+\g^*P_i\big)(x)v-\Tr(P_i\sigma\sigma^*)(x)\\
&=v^*\sS_i(x)v-\cS_i(x)=-h_i(x,v).
\end{align*}

At the fluctuation scale, the common-noise linearization in (\ref{eq:defY}) contributes the fast martingale coefficient $\nabla a(x)[\Gamma(x,v)]$, while the centered integral of $h$ contributes $\nabla_v\Phi(x,v)\sigma(x)$ through the quadratic Poisson corrector. Both terms are driven by the same Brownian motion and must therefore be combined before their quadratic variation is averaged. To express this sum explicitly, define the tensor
\(\cR(x)=(\cR_{ijk}(x))_{1\leq i,j,k\leq d}\) by
\begin{equation}\label{defr1}
\cR_{ijk}(x):=\sum_{\ell=1}^d\partial_{x_\ell}a_{ij}(x)(\gamma^{-1})_{\ell k}(x)
-2\sum_{m=1}^d(P_i(x))_{mk}\sigma_{mj}(x),
\end{equation}
and write
\begin{equation*}
(\cR(x)v)_{ij}:=\sum_{k=1}^d\cR_{ijk}(x)v_k.
\end{equation*}
Equivalently,
\[
\cR(x)v=\nabla a(x)[\gamma^{-1}(x)v]+\nabla_v\Phi(x,v)\sigma(x).
\]
Then the effective covariance $\cQ(x)$ of the additional slow noise is given by
\begin{align}\label{eq:defQ}
\cQ_{ir}(x)
&:=
\int_{\R^d}
\bigl((\cR(x)v)(\cR(x)v)^*\bigr)_{ir}\,\zeta^x(\dif v)
\no\\
&=
\sum_{j,k,\ell=1}^d
\cR_{ijk}(x)M_{k\ell}(x)\cR_{rj\ell}(x).
\end{align}
In particular, $\cQ(x)$ is symmetric and nonnegative definite, but need not be positive definite. The distinction is meaningful: the local velocity correction remains nondegenerate under \textbf{(A1)}, even when the accumulated fluctuation receives no additional independent noise.

\subsection{Main theorem}

Recall that $Z_t^\eps$ is defined by \eqref{eq:defZ}, the overdamped limit $\bar X_t$ satisfies \eqref{eq:overdamped} with $\bar X_0=x$, and $\bar Y_t$ solves \eqref{eq:limitY} with $\bar Y_0=0$. Although $\cQ$ may be degenerate, the equation for $\bar Y_t$ is linear once $\bar X_t$ is given, thus it is well defined.

We now state the main result of this paper.
\begin{theorem}\label{thm:main}
Assume {\bf(A1)} and {\bf(A2)} hold.  Then for every \(T>0\) and
\(\varphi\in C_b^{\a}\) with $\a\in(0,1)$, there exist
constants $C_{T,\varphi},C_\varphi,\kappa>0$, independent of
$\varepsilon\in(0,1)$ and $t\in(0,T]$,  such that
\begin{align}\label{eq:mainest}
\left|\E\varphi(Z_t^\eps)-\E\left(\int_{\R^d}\varphi(z)
\,\mu^{\bar X_t,\bar Y_t}(\dif z)\right)\right|\leq C_{T,\varphi}\, \eps^{\beta/2}+C_\varphi\,\mathrm e^{-\kappa t/\eps},
\end{align}
where $\mu^{x,y}$ is the Gaussian kernel given by (\ref{muxy}). Consequently, for every fixed \(t>0\),
\begin{equation}\label{eq:jointcon}
Z_t^\eps\Longrightarrow Z_t:=\bar Y_t+N^{1/2}(\bar X_t)\xi,
\end{equation}
where \(\xi\sim\mathcal N(0,\mI_d)\) is independent of \((\bar X_t,\bar Y_t)\).
\end{theorem}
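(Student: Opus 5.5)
The plan is to start from the exact decomposition \eqref{eq:exact-decomposition-intro}, $Z_t^\eps=Y_t^\eps+\Gamma(X_t^\eps,V_t^\eps)-\sqrt\eps\,\Gamma(x,v)$, and treat the slow part $Y_t^\eps$ and the fast part $\gamma^{-1}(X_t^\eps)V_t^\eps$ separately. The term $\sqrt\eps\,\Gamma(x,v)$ is deterministic of size $\sqrt\eps$; since $\varphi\in C_b^\a$, it contributes at most $C\eps^{\a/2}$, which is absorbed once $\a$ is compared with $\beta$, or is kept as is. The argument has three steps.

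\textbf{Step 1 (slow pair).} We show that $(X^\eps,Y^\eps)$ converges weakly to $(\bar X,\bar Y)$ with a rate. The method is Poisson equations for the generator of the triple $(X^\eps,Y^\eps,V^\eps)$, whose structure is $\eps^{-1}\sL_0+\eps^{-1/2}(\cdot)+\sL_2$. Write the dynamics of $Y^\eps$ as follows.
\begin{itemize}
\item The drift difference is $\eps^{-1/2}[B(\bar X)-B(X^\eps)]=\nabla B\,\cdot\,(\text{rescaled difference})$, and the rescaled difference is $Y^\eps+\Gamma+O(\sqrt\eps)$.
\item The $\Gamma$ part averages out under $\zeta^x$ because it is linear in $v$. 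It is nonetheless not negligible, since $\eps^{-1/2}\int\cdot\,\dif s$ terms are involved. Correcting with $\Gamma$ and $\Phi$ produces the martingale terms $\nabla a[\gamma^{-1}V]\,\dif W$ and $\nabla_v\Phi\,\sigma\,\dif W$.
\item The $h$-integral is handled by the quadratic corrector $\Phi$ of \eqref{poissh}: $\eps^{-1/2}\int_0^t h\,\dif s=\sqrt\eps(\dots)+\int_0^t\nabla_v\Phi\,\sigma\,\dif W$ plus small terms.
\end{itemize}
The combined martingale $\int\cR(X^\eps)V^\eps\,\dif W$ has quadratic variation that averages to $\cQ$. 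Its cross-variation with $W$ averages to $\int \cR v\,\zeta^x(\dif v)=0$, which yields an independent $\tilde W$.

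To make the rate quantitative, I would use the Kolmogorov equation $u(t,x,y)$ of the limit pair $(\bar X,\bar Y)$, which is $C^{1+\beta}$-regular since $B,a\in C_b^{1+\beta}$. Apply Itô's formula to $u(T-s,X_s^\eps,Y_s^\eps)$ and kill the fast remainders using Poisson correctors in $v$, which are explicit polynomials in $v$. Combined with the moment bounds of Lemma \ref{lem:monest} and \eqref{eq:skrate}, this should give an error $C_T\eps^{\beta/2}$. The main obstacle I expect is that $Y^\eps$ is driven by an $\eps^{-1/2}$-singular drift. The remainder terms involve $\nabla^2 u$ paired with $O(1)$ fast quantities, so the Hölder regularity of $\nabla u$ must be used carefully. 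The safe route is to mollify $u$ at scale $\eps^{1/2}$ and balance the resulting errors.

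\textbf{Step 2 (fast local equilibrium).} Condition on $\mathcal F_{t-\delta}$ with $\delta=\eps\log(1/\eps)\cdot c$. On $[t-\delta,t]$ the slow variables move by only $O(\sqrt\delta)$. Meanwhile $Z^\eps_s-Y^\eps_s\approx\gamma^{-1}(X_s)V_s$, and $V$ relaxes as a frozen Ornstein--Uhlenbeck process with exponential rate $\kappa/\eps$ by \textbf{(A1)}. Freezing $x=X^\eps_{t-\delta}$, the conditional law of $\gamma^{-1}(x)V_t$ is Gaussian, and its distance to $\cN(0,N(x))$ is bounded by $e^{-\kappa\delta/\eps}$ plus the freezing error $O(\sqrt\delta)$. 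Also $Y^\eps_t-Y^\eps_{t-\delta}=O(\sqrt\delta)$ in $L^q$. This gives
\[
\E\varphi(Z_t^\eps)=\E\int\varphi\,\dif\mu^{X^\eps_t,Y^\eps_t}+O(\eps^{\beta/2})+O(e^{-\kappa t/\eps}),
\]
where the second term comes from the case $t<\delta$, in which one conditions at time $0$ instead and pays the initial-layer cost.

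\textbf{Step 3.} The function $(x,y)\mapsto\int\varphi\,\dif\mu^{x,y}$ belongs to $C_b^{1+\beta}$, since Gaussian smoothing with $N$ nondegenerate and smooth regularizes it. Applying Step 1 to this function gives \eqref{eq:mainest}. Finally, \eqref{eq:jointcon} follows by letting $\eps\to0$ and using density of $C_b^\a$ functions in the class of test functions for weak convergence.
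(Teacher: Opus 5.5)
Your Steps 1 and 3 follow essentially the paper's route: the Kolmogorov equation for $(\bar X,\bar Y)$, the explicit correctors $\Gamma$ and $\Phi$, fluctuation estimates for centered functions, and then application to $\bar\varphi$. Your regularity count is too pessimistic, though. In fact $\bar\varphi\in C_b^{2+\beta,2+\beta}$ and $u$ is $C^{2+\beta}$ in $(x,y)$, and this is what the argument needs. The term $\frac12\nabla_x^2u:aa^*$ arises by averaging $\nabla_x^2u:\Gamma v^*$, and the $\eps^{-1/2}$-order fluctuation estimate requires $\nabla u\in C^{1+\beta}$.

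\textbf{The gap.} The genuine gap is the rate in Step 2, together with your treatment of $\sqrt\eps\,\Gamma(x,v)$. The theorem asserts the bound $C_{T,\varphi}\eps^{\beta/2}+C_\varphi\mathrm e^{-\kappa t/\eps}$ for every $\varphi\in C_b^\alpha$, $\alpha\in(0,1)$, with an exponent independent of $\alpha$. Your argument compares $Z_t^\eps$ pathwise with $Y^\eps_{t-\delta}+\gamma^{-1}(X^\eps_{t-\delta})\tilde V_t$. The $L^q$ distance is of order $\sqrt\delta$, coming from $Y_t^\eps-Y_{t-\delta}^\eps$, $X_t^\eps-X_{t-\delta}^\eps$ and $V_t^\eps-\tilde V_t$. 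A $C^\alpha$ test function turns this only into $\delta^{\alpha/2}=(c\,\eps\log(1/\eps))^{\alpha/2}$, just as the deterministic shift only gives $\eps^{\alpha/2}$.

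For $\alpha<\beta$ this is strictly worse than $\eps^{\beta/2}$. Even for Lipschitz $\varphi$ and $\beta=1$ you lose a factor $\sqrt{\log(1/\eps)}$. Upgrading the freezing error to a total-variation bound would need an argument you do not give: Girsanov does not cover the mismatch between $\sigma(X_s^\eps)$ and $\sigma(x)$ in the diffusion coefficient. So Step 2 as written yields the weak convergence \eqref{eq:jointcon}, but not the quantitative estimate \eqref{eq:mainest}.

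\textbf{The missing idea.} Let the nondegenerate fast noise do the smoothing, instead of freezing pathwise. Substituting $V^\eps=\gamma(X^\eps)(Z^\eps-Y^\eps+\sqrt\eps\gamma^{-1}(x)v)$ into the equation for $Z^\eps$ gives \eqref{eq:zfast}. Thus $Z^\eps$ is itself a fast Ornstein--Uhlenbeck process centered at $Y^\eps$, with drift $-\eps^{-1}\gamma(X^\eps)(Z^\eps-Y^\eps)$ and noise $\eps^{-1/2}a(\bar X)\,\dif W$. It is perturbed by the $O(\eps^{-1/2})$ drift $\eps^{-1/2}[B(\bar X_s)-\gamma(X_s^\eps)\gamma^{-1}(x)v]$, which is where the initial shift $\sqrt\eps\,\Gamma(x,v)$ goes.

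With $w(r,z;x,y)=\cP^{x,y}_r\varphi(z)-\bar\varphi(x,y)$ for the frozen semigroup, apply It\^o's formula to $w((t-s)/\eps,Z_s^\eps;X_s^\eps,Y_s^\eps)$ on $[0,t]$:
\begin{itemize}
\item the frozen generator cancels the $\eps^{-1}$ terms exactly;
\item the boundary term at $s=0$ is $w(t/\eps,0;x,0)=O(\mathrm e^{-\kappa t/\eps})$, which is the initial layer;
\item the remaining perturbations are the $\eps^{-1/2}$ drift, the term $\eps^{-1}[aa^*(\bar X)-aa^*(X^\eps)]=O(\eps^{-1/2}|Z^\eps|)$, and the motion of $(X^\eps,Y^\eps)$.
\end{itemize}
These perturbations are integrated against $\nabla_zw$, $\nabla_z^2w$, $\nabla_xw$, $\nabla_yw$, and so on. The key bounds are $|\nabla_zw(r)|\lesssim(r\wedge1)^{-(1-\alpha)/2}\mathrm e^{-\kappa r}$ and $|\nabla_z^2w(r)|\lesssim(r\wedge1)^{-1+\alpha/2}\mathrm e^{-\kappa r}$. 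These singularities are integrable in fast time for every $\alpha>0$, so each term is $\eps^{-1/2}\cdot O(\eps)=O(\sqrt\eps)$.

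This is Proposition \ref{prop:zcon}, with error $C\sqrt\eps+C\mathrm e^{-\kappa t/\eps}$, no conditioning window and no logarithmic loss. Combined with Step 1 applied to $\bar\varphi$, it gives \eqref{eq:mainest}.
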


\br[Conditional structure and failure of path-space convergence]
The representation \eqref{eq:jointcon} gives
\[
\Law(Z_t\mid\bar X_t,\bar Y_t)
=
\mathcal N(\bar Y_t,N(\bar X_t)).
\]
Thus the slow coordinate determines the random center of the limiting distribution, and the frozen velocity equilibrium determines its local covariance. This provides a basis for fluctuation corrections to reduced models.

The variable $\bar Y_t$ is the endpoint of a macroscopic diffusion and contains the accumulated second-order error. The Gaussian term $N^{1/2}(\bar X_t)\xi$ is an instantaneous local-equilibrium correction. In fact, the former admits a path-space limit, but the latter prevents such a limit for the full process $Z^\eps$. Hence $(Z^\eps)_{\eps>0}$ is generally not tight in $C([0,T];\R^d)$. The failure of a continuous-path CLT for $Z^\eps$ does not prevent a precise limiting description.
\er

\begin{remark}[Convergence rate and the initial layer]
Note that the convergence rate is independent of the regularity of the test function $\varphi$. When $\beta=1$, the algebraic term in \eqref{eq:mainest} is of order $\sqrt{\varepsilon}$, which should be optimal in terms of the first-order Taylor remainders and the central-limit scaling. For general $\beta\in(0,1]$, it quantifies how the coefficient regularity affects the accuracy of the fluctuation approximation.

An initial-layer contribution in the estimate is necessary and cannot in general be removed. Indeed, $Z_0^\eps=Y_0^\eps=0$, whereas the conditional equilibrium predicted by the limiting kernel at the initial slow state is $\mathcal N(0,N(x))$. The exponential term describes relaxation on the time scale $\varepsilon$. To make this term no larger than the algebraic error, it suffices to take
\[
t\geq\frac{\beta}{2\kappa}\,\varepsilon\log(1/\varepsilon).
\]
Thus the relaxation scale and the time needed to attain the stated algebraic accuracy are distinct.
\end{remark}

\begin{remark}[Cancellation in one dimension]
When $d=1$, the Lyapunov equations give
\[
M(x)=\frac{\sigma^2(x)}{2\gamma(x)},\qquad N(x)=\frac{\sigma^2(x)}{2\gamma^3(x)},
\qquad \cS(x)=-\frac{\gamma'(x)\sigma(x)^2}{2\gamma^3(x)},
\qquad P(x)=-\frac{\gamma'(x)}{2\gamma^3(x)}.
\]
Since
\[
\frac{a'(x)}{\g(x)}=\frac{\sigma'(x)}{\gamma^2(x)}-\frac{\sigma(x)\gamma'(x)}{\gamma^3(x)},
\qquad -2P(x)\sigma(x)=\frac{\sigma(x)\gamma'(x)}{\gamma^3(x)},
\]
the friction-derivative contributions cancel, so that
\[
\cR(x)=\frac{\sigma'(x)}{\gamma^2(x)}.
\]
Therefore we have
\begin{equation*}
\cQ(x)=\cR^2(x)M(x)=\frac{\sigma^2(x)(\sigma'(x))^2}{2\gamma^5(x)}.
\end{equation*}
The limit equation becomes
\[
\dif\bar Y_t=B'(\bar X_t)\bar Y_t\dif t+a'(\bar X_t)\bar Y_t\dif W_t+\frac{\sigma(\bar X_t)\sigma'(\bar X_t)}{\sqrt{2}\,\gamma^{5/2}(\bar X_t)}\dif\tilde W_t,
\qquad \bar Y_0=0.
\]
Thus derivatives of the friction coefficient cancel from the additional independent martingale coefficient, although they remain present in the common-noise linearization and in the linearized drift. In particular, if $\sigma$ is constant, then $\cQ\equiv0$ and $\bar Y_t\equiv0$, since $\bar Y_0=0$. The fixed-time limit nevertheless remains nontrivial:
\[
Z_t^\varepsilon
\Longrightarrow
N^{1/2}(\bar X_t)\xi.
\]
For constant friction as well, this reduces to $\mathcal N(0,\sigma^2/(2\gamma^3))$. Thus persistent terminal fluctuations can occur even when there is no accumulated slow fluctuation at the scale $\sqrt{\varepsilon}$.
\end{remark}

\section{Exact fluctuation decomposition and preliminary estimates}
\label{sec:est}

We first separate the persistent velocity contribution from the normalized position difference \(Z_t^\eps\).  The resulting  decomposition identifies the slow fluctuation variable that will be used throughout the proof.
Recall that
\[
\Gamma(x,v)=\gamma^{-1}(x)v.
\]
Since \(\nabla_v\Gamma=\gamma^{-1}\) and \(\nabla_v^2\Gamma=0\), by the definition \eqref{eq:L0} of the operator \(\sL_0(x)\), it holds that
\begin{equation}\label{eq:poivs}
\sL_0(x)\Gamma(x,v)=-v,
\end{equation}
As a result, we have the following decomposition.
\begin{lemma}\label{lem:decz}
Assume that {\bf(A1)} and {\bf(A2)} hold. Let \(Z_t^\eps\) and
\(Y_t^\eps\) be defined by \eqref{eq:defZ} and \eqref{eq:defY},
respectively. Then, for every \(t\geq0\),
\begin{equation}\label{eq:dec}
Z_t^\eps
=Y_t^\eps+\Gamma(X_t^\eps,V_t^\eps)-\sqrt{\eps}\,\Gamma(x,v).
\end{equation}
\end{lemma}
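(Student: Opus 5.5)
We apply It\^o's formula to $\Gamma(X_t^\eps,V_t^\eps)=\gamma^{-1}(X_t^\eps)V_t^\eps$ along the system \eqref{eq:xv}. Then we solve the resulting identity for $\frac{1}{\sqrt\eps}\int_0^t V_s^\eps\,\dif s=X_t^\eps-x$, and subtract it from the integral form of \eqref{eq:overdamped}.

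\textbf{Step 1: It\^o's formula for $\Gamma$.} The function $\Gamma$ is linear in $v$, so $\nabla_v^2\Gamma=0$. The only second-order contribution is therefore the cross variation between $X^\eps$ and $V^\eps$, and this vanishes because $X^\eps$ has finite variation. Since $\gamma^{-1}\in C_b^2$ by \textbf{(A1)}--\textbf{(A2)}, componentwise we obtain
\begin{align*}
\dif\,\Gamma_i(X_t^\eps,V_t^\eps)
&=\frac{1}{\sqrt\eps}\sum_{j,k}\p_{x_j}(\gamma^{-1})_{ik}(X_t^\eps)V^\eps_{t,k}V^\eps_{t,j}\,\dif t
+\frac{1}{\sqrt\eps}\big(\gamma^{-1}F\big)_i(X_t^\eps)\,\dif t\\
&\quad-\frac{1}{\eps}V^\eps_{t,i}\,\dif t+\frac{1}{\sqrt\eps}\big(\gamma^{-1}\sigma\big)_i(X_t^\eps)\,\dif W_t .
\end{align*}
The key identity here is $\gamma^{-1}\cdot(-\frac1\eps\gamma V)=-\frac1\eps V$, which is \eqref{eq:poivs} scaled by $1/\eps$.

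\textbf{Step 2: rewrite the quadratic term.} Using \eqref{eq:defh}, we have $\sum_{j,k}\p_{x_j}(\gamma^{-1})_{ik}v_kv_j=\cS_i(x)-h_i(x,v)$. Combined with \eqref{eq:defBa}, the drift of order $\eps^{-1/2}$ becomes $\frac1{\sqrt\eps}[B(X_t^\eps)-h(X_t^\eps,V_t^\eps)]$, and the noise term becomes $\frac1{\sqrt\eps}a(X_t^\eps)\dif W_t$.

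\textbf{Step 3: solve for the position increment.} Integrate over $[0,t]$ and multiply by $\sqrt\eps$. Using $\Gamma(X_0^\eps,V_0^\eps)=\sqrt\eps\,\gamma^{-1}(x)v=\sqrt\eps\,\Gamma(x,v)$, we get
\begin{align*}
X_t^\eps-x=\frac{1}{\sqrt\eps}\int_0^tV_s^\eps\,\dif s
&=\int_0^t\big[B(X_s^\eps)-h(X_s^\eps,V_s^\eps)\big]\dif s+\int_0^ta(X_s^\eps)\,\dif W_s\\
&\quad-\sqrt\eps\,\Gamma(X_t^\eps,V_t^\eps)+\eps\,\Gamma(x,v).
\end{align*}
Subtract this from $\bar X_t-x=\int_0^tB(\bar X_s)\dif s+\int_0^ta(\bar X_s)\dif W_s$ and divide by $\sqrt\eps$. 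This yields exactly $Z_t^\eps=Y_t^\eps+\Gamma(X_t^\eps,V_t^\eps)-\sqrt\eps\,\Gamma(x,v)$, with $Y_t^\eps$ as in \eqref{eq:defY}; note that the sign of the $h$-term matches.

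There is no real obstacle here, only bookkeeping. The points needing care are index conventions and sign tracking. In the quadratic term, the derivative index $j$ pairs with the velocity coming from $\dot X$ and $k$ with the $v$-argument, as in \eqref{eq:defh}; since the expression is a product $v_kv_j$, either ordering gives the same scalar. The other point is justifying It\^o's formula: $\Gamma$ is $C^2$, the SDE \eqref{eq:xv} has a unique global strong solution under bounded Lipschitz coefficients, and all stochastic integrals are true martingales by moment bounds on $V^\eps$. Because the claimed identity is pathwise, even local martingales would suffice.
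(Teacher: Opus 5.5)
Your proof is correct and follows the paper's route: It\^o's formula for $\Gamma(X_t^\eps,V_t^\eps)$, the identity $\sL_0(x)\Gamma=-v$ to isolate $\frac1\eps\int_0^tV_s^\eps\,\dif s$, and substitution into the difference with the overdamped equation (your step of solving for $X_t^\eps-x$ and subtracting is the same as the paper's substitution into its formula for $Z_t^\eps$), with the signs and the initial term $\Gamma(x,\sqrt\eps v)=\sqrt\eps\,\Gamma(x,v)$ handled correctly. One small inaccuracy in your side remark: the drift $-\frac1\eps\gamma(x)v$ is neither bounded nor globally Lipschitz, so well-posedness of \eqref{eq:xv} comes from local Lipschitz continuity plus non-explosion rather than from bounded Lipschitz coefficients, but this does not affect the pathwise identity.
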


\begin{proof}
Applying It\^o's formula to
\(\Gamma(X_t^\eps,V_t^\eps)\) and by \eqref{eq:xv}, we have
\begin{align*}
\Gamma(X^\eps_t,V_t^\eps)&=\Gamma(x,\sqrt{\eps}v)+\frac{1}{\sqrt{\eps}}\int_0^t
\langle \nabla_x\Gamma(X^\eps_s,V_s^\eps),V_s^\eps\rangle\dif s
+\frac{1}{\eps}\int_0^t\sL_0(X^\eps_s)\Gamma(X^\eps_s,V_s^\eps)\dif s\\
&\quad+\frac{1}{\sqrt{\eps}}\int_0^t\langle \nabla_v\Gamma(X^\eps_s,V_s^\eps),F(X_s^\eps)\rangle\dif s+\frac{1}{\sqrt{\eps}}\int_0^t\langle \nabla_v\Gamma(X^\eps_s,V_s^\eps),\sigma(X_s^\eps)\dif W_s\rangle.
\end{align*}
Combining this identity with \eqref{eq:poivs}, we obtain
\begin{align}\label{eq:veps}
-\frac{1}{\eps}\int_0^tV_s^\eps\dif s
&=\Gamma(X^\eps_t,V_t^\eps)-\sqrt{\eps}\Gamma(x,v)-\frac{1}{\sqrt{\eps}}\int_0^t\nabla\big(\g^{-1}\big)(X^\eps_s)\cdot V_s^\eps(V_s^\eps)^*\dif s\no\\
&\quad-\frac{1}{\sqrt{\eps}}\int_0^t\g^{-1}(X^\eps_s)F(X_s^\eps)\dif s-\frac{1}{\sqrt{\eps}}\int_0^ta(X_s^\eps)\dif W_s,
\end{align}
where
\[
\left(\nabla\big(\gamma^{-1}\big)(x)\cdot vv^*\right)_i
:=\sum_{j,k=1}^d\partial_{x_j}(\gamma^{-1})_{ik}(x)v_kv_j.
\]
On the other hand, subtracting the position equation in \eqref{eq:xv} from \eqref{eq:overdamped}, we get that
\begin{equation}\label{eq:zeps}
Z_t^\eps=\frac1{\sqrt{\eps}}\int_0^tB(\bar X_s)\dif s-\frac1\eps\int_0^tV_s^\eps\,\dif s
+\frac1{\sqrt{\eps}}\int_0^ta(\bar X_s)\dif W_s.
\end{equation}
Substituting \eqref{eq:veps} into \eqref{eq:zeps}, and then using
\(B=\gamma^{-1}F+\cS\) and the definition \eqref{eq:defh} of \(h\), we obtain the desired decomposition of $Z_t^\eps$.
The proof is finished.
\end{proof}

We next collect the moment estimates used throughout the paper.

\begin{lemma}\label{lem:monest}
Assume {\bf(A1)} and {\bf(A2)} hold. Then for every \(q\geq2\) and \(T>0\),
there exists a constant \(C_{q,T}>0\) such that
\begin{align}
&\sup_{\eps\in(0,1)}\sup_{0\leq t\leq T}\E\norm{V_t^\eps}^{q}
\leq C_{q,T}\big(1+\norm v^q\big),\label{eq:estv}\\
&\sup_{\eps\in(0,1)}\sup_{0\leq t\leq T}\E\norm{X_t^\eps}^{q}
\leq C_{q,T}\big(1+\norm x^q+\norm v^q\big),\label{eq:estx}\\
&\sup_{\eps\in(0,1)}\sup_{0\leq t\leq T}\E\norm{Z_t^\eps}^{q}\leq C_{q,T},\label{eq:estz}
\end{align}
and
\begin{equation}\label{eq:esty}
\sup_{\eps\in(0,1)}\sup_{0\leq t\leq T}\E\norm{Y_t^\eps}^{q}\leq C_{q,T}.
\end{equation}
\end{lemma}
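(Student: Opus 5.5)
The plan is to prove the four bounds in the order \eqref{eq:estv}, \eqref{eq:estx}, \eqref{eq:estz}, \eqref{eq:esty}. Each bound feeds into the next, and the last one follows almost for free from the exact decomposition in Lemma \ref{lem:decz}.

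\emph{Velocity bound.} Apply It\^o's formula to $|V_t^\eps|^q$ using \eqref{eq:xv}. By \textbf{(A1)}, $\langle\gamma(x)v,v\rangle\geq\lambda_\gamma|v|^2$ uniformly in $x$. The drift term $-\frac{q}{\eps}|V|^{q-2}\langle\gamma V,V\rangle$ therefore provides dissipation $-\frac{q\lambda_\gamma}{\eps}|V|^q$. Since $F,\sigma$ are bounded, the remaining contributions are at most
\[
\frac{C}{\sqrt\eps}|V|^{q-1}+\frac{C}{\eps}|V|^{q-2}.
\]
By Young's inequality these are dominated by $\frac{q\lambda_\gamma}{2\eps}|V|^q+\frac{C}{\eps}$. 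Taking expectations (after a standard localization) gives
\[
\frac{d}{dt}\E|V_t^\eps|^q\leq-\frac{c}{\eps}\E|V_t^\eps|^q+\frac{C}{\eps}.
\]
Gronwall then yields $\E|V_t^\eps|^q\leq e^{-ct/\eps}\eps^{q/2}|v|^q+C$, which is \eqref{eq:estv} uniformly in $t$ and $\eps$. I will also need a time-integrated bound of the form $\E\sup_{t\le T}|V_t^\eps|^q\le C\eps^{-\delta}$, or simply the pointwise bound combined with Fubini. I expect the pointwise bound to suffice.

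\emph{Position bound.} Rather than integrating $\eps^{-1/2}V$ directly, which would lose a factor $\eps^{-1/2}$, I use identity \eqref{eq:veps} from the proof of Lemma \ref{lem:decz}, which gives
\[
X_t^\eps-x=\eps^{-1/2}\int_0^t V_s\,\dif s.
\]
Together with \eqref{eq:veps}, this expresses $X_t^\eps-x$ as
\[
-\sqrt\eps\,\Gamma(X_t^\eps,V_t^\eps)+\eps\,\Gamma(x,v)+\int_0^t\nabla(\gamma^{-1})\cdot V_sV_s^*\,\dif s+\int_0^t\gamma^{-1}F\,\dif s+\int_0^t a\,\dif W_s.
\]
All coefficients are bounded, so \eqref{eq:estv}, H\"older's inequality and BDG give \eqref{eq:estx}. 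In fact they give even $\E|X_t^\eps-x|^q\le C_{q,T}(1+|v|^q)$.

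\emph{Fluctuation bound.} For \eqref{eq:estz} I invoke \eqref{eq:skrate}, which is already stated in the paper as known under these assumptions. Dividing by $\eps^{q/2}$ gives the bound immediately. If instead one wants a self-contained argument, I would derive it from the decomposition: show that $Y^\eps$ is bounded and then add $\Gamma(X^\eps,V^\eps)$, which is bounded by \eqref{eq:estv}.

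\emph{Slow fluctuation bound.} Lemma \ref{lem:decz} gives
\[
Y_t^\eps=Z_t^\eps-\Gamma(X_t^\eps,V_t^\eps)+\sqrt\eps\,\Gamma(x,v).
\]
Since $\gamma^{-1}$ is bounded, \eqref{eq:esty} follows from \eqref{eq:estz} and \eqref{eq:estv}. For self-containedness, I would alternatively bound $Y^\eps$ directly. The difference terms in \eqref{eq:defY} are Lipschitz in $\bar X-X^\eps=\sqrt\eps Z^\eps$, so each contributes at most $C\int_0^t|Z_s^\eps|\,\dif s$ in $L^q$, and $Z=Y+O(1)$ closes a Gronwall loop. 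The $h$-term is handled through the Poisson corrector \eqref{poissh}. It\^o's formula applied to $\Phi(X^\eps,V^\eps)$ turns $\eps^{-1/2}\int_0^t h\,\dif s$ into
\[
\sqrt\eps\,[\Phi(X_0,V_0)-\Phi(X_t,V_t)]+\int_0^t\nabla_x\Phi\cdot V\,\dif s+\int_0^t\nabla_v\Phi\, F\,\dif s+\int_0^t\nabla_v\Phi\,\sigma\,\dif W_s,
\]
all of which are polynomial in $V$ and hence bounded in $L^q$. The main obstacle is exactly this step: the $\eps^{-1/2}$ prefactor on the centered quadratic term $h$ must be removed via the corrector. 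Citing \eqref{eq:skrate} sidesteps it, and that is the route I would take first.
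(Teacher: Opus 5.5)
Your proposal is correct and follows the paper's proof essentially step by step. The paper uses It\^o's formula for $|V_t^\eps|^q$ with coercivity, Young's inequality and a comparison argument for \eqref{eq:estv}; the position equation combined with \eqref{eq:veps}, plus BDG and the pointwise velocity moments, for \eqref{eq:estx}; \eqref{eq:skrate} for \eqref{eq:estz}; and the decomposition of Lemma~\ref{lem:decz} for \eqref{eq:esty}, exactly as you do. The self-contained alternatives you sketch (a Gronwall loop for $Y^\eps$, with the quadratic corrector $\Phi$ removing the $\eps^{-1/2}$ in front of $h$) are sound but not needed, since the paper also simply cites \eqref{eq:skrate}.
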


\begin{proof}
Applying It\^o's formula to \(\norm{V_t^\eps}^q\), and  using the
coercivity assumption of \(\gamma\), Young's inequality,
and the boundedness of \(F\) and \(\sigma\), we have
\[
\frac{\dif}{\dif t}\E\norm{V_t^\eps}^q
\leq-\frac{c_q}{\eps}\E\norm{V_t^\eps}^q+\frac{C_q}{\eps}.
\]
As a result of  the comparison principle, we obtain \eqref{eq:estv}.
Combining the position equation in \eqref{eq:xv} with \eqref{eq:veps}, we get
\begin{align*}
X_t^\eps&=x+\eps\g^{-1}(x)v-\sqrt{\eps}\g^{-1}(X^\eps_t)V_t^\eps+\int_0^t\nabla\big(\g^{-1}\big)(X^\eps_s)\cdot V_s^\eps(V_s^\eps)^*\dif s\\
&\quad+\int_0^t\g^{-1}(X^\eps_s)F(X_s^\eps)\dif s+\int_0^ta(X_s^\eps)\dif W_s.
\end{align*}
Estimate \eqref{eq:estx} then follows from \eqref{eq:estv}, Burkholder--Davis--Gundy's inequality, and the boundedness of the coefficients. Meanwhile, estimate \eqref{eq:estz} follows directly from the strong convergence estimate \eqref{eq:skrate}.
Finally,  by \eqref{eq:dec} and the boundedness
of \(\gamma^{-1}\), we deduce that
\[
\norm{Y_t^\eps}\leq\norm{Z_t^\eps}+C\norm{V_t^\eps}+C\sqrt{\eps}\norm v,
\]
which together with \eqref{eq:estv} and \eqref{eq:estz} proves \eqref{eq:esty}. The proof is finished.
\end{proof}

The following lemma provides a quantitative Taylor
remainder estimate  for \(f(\bar X_t)-f(X_t^\eps)\).

\begin{lemma}
Let \(f\in C_b^{1+\beta}\) with \(\beta\in(0,1]\). Then there exists a constant $C_0>0$ such that
\begin{align}\label{eq:rem2}
&\left|\frac{f(\bar X_t)-f(X_t^\eps)}{\sqrt{\eps}}-\nabla f(X_t^\eps)Y_t^\eps
-\nabla f(X_t^\eps)\Gamma(X_t^\eps,V_t^\eps)\right|
\leq C_0\,\eps^{\beta/2}\bigl(1+\norm{Z_t^\eps}^{1+\beta}\bigr).
\end{align}
\end{lemma}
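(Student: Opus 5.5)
This is a deterministic pathwise estimate. The plan is to Taylor expand $f$ around $X_t^\eps$ and then use the decomposition of Lemma~\ref{lem:decz} to express the increment through $Y_t^\eps$ and $\Gamma$.

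Since $\bar X_t = X_t^\eps + \sqrt{\eps}Z_t^\eps$, the mean value theorem gives
\[
f(\bar X_t)-f(X_t^\eps)=\int_0^1\nabla f\big(X_t^\eps+\theta\sqrt{\eps}Z_t^\eps\big)\dif\theta\,\sqrt{\eps}\,Z_t^\eps .
\]
Dividing by $\sqrt{\eps}$ and using the $\beta$-H\"older continuity of $\nabla f$ (the $C_b^{1+\beta}$ assumption), we get
\[
\Big|\frac{f(\bar X_t)-f(X_t^\eps)}{\sqrt{\eps}}-\nabla f(X_t^\eps)Z_t^\eps\Big|\le [\nabla f]_\beta\,\eps^{\beta/2}\norm{Z_t^\eps}^{1+\beta}.
\]

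Next we substitute \eqref{eq:dec}, which reads $Z_t^\eps=Y_t^\eps+\Gamma(X_t^\eps,V_t^\eps)-\sqrt{\eps}\,\Gamma(x,v)$. The first two pieces give exactly the terms $\nabla f(X_t^\eps)Y_t^\eps+\nabla f(X_t^\eps)\Gamma(X_t^\eps,V_t^\eps)$ in \eqref{eq:rem2}. The leftover is $-\sqrt{\eps}\,\nabla f(X_t^\eps)\gamma^{-1}(x)v$, which is bounded by $\|\nabla f\|_\infty\|\gamma^{-1}\|_\infty\norm v\sqrt{\eps}$. Since $\beta\le1$ and $\eps<1$, we have $\sqrt{\eps}\le\eps^{\beta/2}$. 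This term is therefore absorbed into $C_0\eps^{\beta/2}$, with $C_0$ depending on $\norm v$, $\|\nabla f\|_\infty$ and the bound on $\gamma^{-1}$ from \textbf{(A1)}.

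Adding the two bounds gives $C_0\eps^{\beta/2}(1+\norm{Z_t^\eps}^{1+\beta})$. The only delicate point is bookkeeping. The Taylor expansion must be centered at $X_t^\eps$, not at $\bar X_t$, so that the gradient multiplying $Y_t^\eps$ and $\Gamma$ is evaluated at $X_t^\eps$. The initial-velocity correction then has to be placed in the constant, so $C_0$ may depend on the initial datum $v$.
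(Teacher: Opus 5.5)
Your proposal is correct and follows the paper's proof essentially verbatim. You use a first-order Taylor expansion centered at $X_t^\eps$ with the $\beta$-H\"older bound on $\nabla f$, substitute the decomposition \eqref{eq:dec}, and absorb the leftover $\sqrt{\eps}\,\nabla f(X_t^\eps)\Gamma(x,v)$ into $C_0\eps^{\beta/2}$, with $C_0$ depending on $\norm{v}$; your integral form of the mean value theorem is slightly cleaner than the paper's single intermediate point $\theta$, since it remains valid for vector- and matrix-valued $f$ such as $B$ and $a$.
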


\begin{proof}
Since $\bar X_t-X_t^\eps=\sqrt{\eps}\,Z_t^\eps$, by the mean value theorem, we have for some $0<\theta<1$,
\begin{align}\label{eq:rem1}
\left|\frac{f(\bar X_t)-f(X_t^\eps)}{\sqrt{\eps}}-\nabla f(X_t^\eps)Z_t^\eps\right|
&=\left|\nabla f(X_t^\eps+\theta\sqrt{\eps}Z_t^\eps)Z_t^\eps-\nabla f(X_t^\eps)Z_t^\eps\right|\no\\
&\leq C_1\, \eps^{\beta/2}\norm{Z_t^\eps}^{1+\beta}.
\end{align}
By the decomposition of $Z_t^\eps$ in Lemma~\ref{lem:decz}, we further deduce that
\begin{align*}
&\left|\nabla f(X_t^\eps)Z_t^\eps-\nabla f(X_t^\eps)Y_t^\eps-\nabla f(X_t^\eps)\Gamma(X_t^\eps,V_t^\eps)\right|\\
&=\sqrt{\eps}\left|\nabla f(X_t^\eps)\Gamma(x,v)\right|\leq C_2\sqrt{\eps}\norm v.
\end{align*}
Combining this  with \eqref{eq:rem1}, we obtain \eqref{eq:rem2}. The proof is finished.
\end{proof}

The following lemma establishes several regularity results concerning the coefficients, which will be used repeatedly below.

\begin{lemma}\label{lem:reg}
Assume that {\bf(A1)} and {\bf(A2)} hold. Then we have
\[
\cS,B,P_i,\cR,\cQ\in C_b^{1+\beta},
\qquad
a,M,N\in C_b^{2+\beta}.
\]
Moreover, \(M\) and \(N\) are uniformly positive definite.
\end{lemma}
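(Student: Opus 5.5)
The plan is to treat the Lyapunov solutions first, since every other coefficient is built from them by algebra and differentiation. For a matrix $A$ with symmetric part bounded below by $\lambda_\gamma$ (assumption \textbf{(A1)}), the equation $AX+XA^*=C$ has the unique solution
\[
X=\int_0^\infty e^{-sA}Ce^{-sA^*}\dif s,
\]
and $|e^{-sA}|\le e^{-\lambda_\gamma s}$. Hence the map $L_A:X\mapsto AX+XA^*$ is invertible on symmetric matrices, with $|L_A^{-1}|\le 1/(2\lambda_\gamma)$ uniformly in $x$. The same bound holds for $X\mapsto A^*X+XA$, which is the operator in \eqref{eq:lyapP}.

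\textbf{Regularity of $M$, $N$, $P_i$.} Apply the implicit function theorem, or simply differentiate the identity $L_{\gamma(x)}M(x)=\sigma\sigma^*(x)$:
\[
L_\gamma(\partial M)=\partial(\sigma\sigma^*)-(\partial\gamma)M-M(\partial\gamma)^*.
\]
Inductively this gives $\partial^kM=L_\gamma^{-1}(\cdots)$, where the right-hand side involves derivatives of $\gamma$ and $\sigma\sigma^*$ up to order $k$ and derivatives of $M$ up to order $k-1$. The map $x\mapsto L_{\gamma(x)}^{-1}$ is $C_b^{2+\beta}$ as an operator-valued function, because inversion is smooth on invertible operators and the inverse is uniformly bounded. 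Since $\gamma,\sigma\in C_b^{2+\beta}$ by \textbf{(A2)}, we get $M\in C_b^{2+\beta}$. Also $\gamma^{-1}\in C_b^{2+\beta}$: it is bounded since $|\gamma^{-1}|\le 1/\lambda_\gamma$, and its derivatives come from $\partial\gamma^{-1}=-\gamma^{-1}(\partial\gamma)\gamma^{-1}$. Products of $C_b^{2+\beta}$ functions stay in $C_b^{2+\beta}$, so $a=\gamma^{-1}\sigma$ and $N=\gamma^{-1}M\gamma^{-*}$ lie in $C_b^{2+\beta}$. Next, $\sS_i$ involves first derivatives of $\gamma^{-1}$, so $\sS_i\in C_b^{1+\beta}$, and the same Lyapunov argument gives $P_i\in C_b^{1+\beta}$.

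\textbf{The remaining coefficients.} $\cS$ is a sum of products of $\partial(\gamma^{-1})\in C_b^{1+\beta}$ and $M\in C_b^{2+\beta}$, so it is in $C_b^{1+\beta}$. Then $B=\gamma^{-1}F+\cS\in C_b^{1+\beta}$, using $F\in C_b^{1+\beta}$. The tensor $\cR$ in \eqref{defr1} combines $\nabla a\in C_b^{1+\beta}$, $\gamma^{-1}$, $P_i$ and $\sigma$, so $\cR\in C_b^{1+\beta}$. Finally $\cQ$ is a finite sum of products $\cR M\cR$, hence $\cQ\in C_b^{1+\beta}$.

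\textbf{Uniform positive definiteness.} For $M$, use the integral formula together with $\sigma\sigma^*\ge\lambda_\sigma\Id$:
\[
\ip{M\xi}{\xi}\ge\lambda_\sigma\int_0^\infty|e^{-s\gamma^*}\xi|^2\dif s.
\]
The integrand satisfies $|e^{-s\gamma^*}\xi|\ge e^{-\|\gamma\|_\infty s}|\xi|$, because $\frac{\dif}{\dif s}|u_s|^2\ge-2\|\gamma\|_\infty|u_s|^2$ for $u_s=e^{-s\gamma^*}\xi$. This gives the lower bound $\lambda_\sigma/(2\|\gamma\|_\infty)$. For $N$, note that $a a^*=\gamma^{-1}\sigma\sigma^*\gamma^{-*}\ge\lambda_\sigma\|\gamma\|_\infty^{-2}\Id$, and apply the same argument to the equation for $N$.

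\textbf{Main obstacle.} The only step needing some care is that the Hölder seminorm of the top derivative is controlled uniformly in $x$. This follows from the identity
\[
L_{A_1}^{-1}-L_{A_2}^{-1}=L_{A_1}^{-1}\bigl(L_{A_2}-L_{A_1}\bigr)L_{A_2}^{-1},
\]
combined with the uniform bound on $L_A^{-1}$.
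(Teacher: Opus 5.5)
Your proof is correct, but you reach the regularity of the Lyapunov solutions by a different mechanism than the paper. The paper differentiates the representation $M(x)=\int_0^\infty \mathrm e^{-s\gamma(x)}\sigma\sigma^*(x)\mathrm e^{-s\gamma^*(x)}\,\dif s$ under the integral sign. It uses Duhamel's formula to get $|\nabla_x \mathrm e^{-s\gamma(x)}[y]|\le C s\,\mathrm e^{-\lambda_\gamma s}|y|$, and then asserts $M\in C_b^{2+\beta}$ from this first-derivative bound. It treats $P_i$ the same way and states positive definiteness without a constant.

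You instead use the integral formula only to get the uniform bound $|L_{\gamma(x)}^{-1}|\le 1/(2\lambda_\gamma)$. You then get all derivatives from the smoothness of operator inversion (equivalently, by differentiating $L_{\gamma}M=\sigma\sigma^*$), and the resolvent identity gives uniform H\"older control of the top derivative.

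What your route buys is a complete, purely algebraic treatment of the second derivative and its $\beta$-H\"older seminorm. You never need to iterate Duhamel's formula or justify differentiation under the integral, which the paper leaves implicit. Your positivity argument also produces explicit constants: $\lambda_\sigma/(2\|\gamma\|_\infty)$ for $M$, and the analogous one for $N$ via $aa^*\ge\lambda_\sigma\|\gamma\|_\infty^{-2}\Id$.

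What the paper's semigroup route buys is the estimates on $\mathrm e^{-t\gamma(x)}$ and its $x$-derivative, which are reused in Lemma~\ref{lem:estou}. The handling of $\cS$, $B$, $\cR$ and $\cQ$ as products of $C_b^{k+\beta}$ functions is the same in both.
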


\begin{proof}
The solution of the Lyapunov equation \eqref{eq:lyapM} admits the
representation
\[
M(x)=\int_0^{+\infty}\mathrm e^{-s\gamma(x)}\sigma(x)\sigma^*(x)\mathrm e^{-s\gamma^*(x)}\dif s.
\]
Applying Duhamel's formula and using the uniform coercivity in assumption {\bf(A1)}, we have for every \(y \in \mathbb{R}^d\),
\begin{align*}
\left| \nabla_x \mathrm e^{-s\gamma(x)}[y] \right| &= \left| - \int_0^s\mathrm e^{-(s-r)\gamma(x)} \nabla\gamma(x)[y]\mathrm e^{-r\gamma(x)} \dif r \right| \\
&\le C_0 |y| \int_0^s\mathrm e^{-\lambda_\gamma(s-r)}\cdot\mathrm e^{-\lambda_\gamma r} \dif r \\
&= C_0 s \mathrm e^{-\lambda_\gamma s} |y|,
\end{align*}
which together with \(\gamma,\sigma \in C_b^{2+\beta}\)  yields \(M \in C_b^{2+\beta}\). Since \(\gamma \) is uniformly invertible, its inverse satisfies \(\gamma^{-1} \in C_b^{2+\beta}\).
It then follows from \eqref{eq:defBa}, \eqref{eq:def-S} and
\eqref{eq:def-N} that
\[
\cS,B\in C_b^{1+\beta},\qquad a,N\in C_b^{2+\beta}.
\]
Similarly, by (\ref{eq:lyapP}), \(P_i\) admits the representation
\[
P_i(x)=\int_0^{+\infty}\mathrm e^{-s\gamma^*(x)}\sS_i(x)\mathrm e^{-s\gamma(x)}\dif s.
\]
Since \(\sS_i\in C_b^{1+\beta}\), using the same argument as above, we have
\(P_i\in C_b^{1+\beta}\).
Finally, formula \eqref{defr1} implies that
\(\cR\in C_b^{1+\beta}\), and \eqref{eq:defQ} then gives
\(\cQ\in C_b^{1+\beta}\). Moreover, the uniform positive definiteness of \(M\) follows from the uniform
ellipticity of \(\sigma\sigma^*\), while that of \(N\) follows from
\eqref{eq:def-N}. The proof is finished.
\end{proof}

\section{Joint diffusion limit for the slow fluctuation}
\label{sec:slowcon}
In this section, we first establish quantitative fluctuation estimates for centered functions.
Then we use the Kolmogorov equation to prove the weak convergence of the slow pair.

\subsection{Second-order fluctuation estimates}

We shall need some mollification arguments due to the low regularity of the coefficients.
Let \(\rho_1:\R\to[0,1]\) and \(\rho_2:\R^d\to[0,1]\) be two smooth radial convolution kernel functions such that \(\int_{\R}\rho_1(r)\dif r=\int_{\R^d}\rho_2(x)\dif x=1\), and for any \(k\geq1\), there exist constants \(C_k>0\) such that \(|\nabla^k\rho_1(r)|\leq C_k\rho_1(r)\) and \(|\nabla^k\rho_2(x)|\leq C_k\rho_2(x)\). For every \(n\geq1\), define
\[
\rho^n_1(r):=n^2\rho_1(n^2r)\quad\mathrm{and}\quad\rho^n_2(x):=n^{d}\rho_2(nx).
\]
For a function \(f(t,x,y,v)\), define
\begin{align}\label{eq:fn}
f_n(t,x,y,v)
:=
\int_{\R^{2d+1}}
&f(t-s,x-\tilde x,y-\tilde y,v)
\rho_1^n(s)\rho_2^n(\tilde x)\rho_2^n(\tilde y)
\,\dif s\,\dif\tilde x\,\dif\tilde y.
\end{align}
The proof of the following  mollification estimates are standard, see e.g.,  \cite[Lemma 4.1]{RX}.

\begin{lemma}\label{lem:molf}
Let \(f\in C_p^{\vartheta/2,\vartheta,\vartheta,0}\) with \(\vartheta\in(0,2]\), and let \(f_n\) be defined by \eqref{eq:fn}. Then for every \(y,v\in\R^d\),
\begin{align}
&\|f_n(\cdot,\cdot,y,v)-f(\cdot,\cdot,y,v)\|_\infty
\leq C_0n^{-\vartheta}\left(1+\norm y^p+\norm v^p\right),\label{eq:mol0}\\
&\|\nabla_xf_n(\cdot,\cdot,y,v)\|_\infty+\|\nabla_yf_n(\cdot,\cdot,y,v)\|_\infty
\leq C_0n^{1-(\vartheta\wedge1)}\left(1+\norm y^p+\norm v^p\right),\label{eq:mol1}
\end{align}
and
\begin{align}\label{eq:mol2}
&\|\p_tf_n(\cdot,\cdot,y,v)\|_\infty+\|\nabla^2_xf_n(\cdot,\cdot,y,v)\|_\infty
+\|\nabla^2_yf_n(\cdot,\cdot,y,v)\|_\infty\no\\
&\leq C_0n^{2-\vartheta}\left(1+\norm y^p+\norm v^p\right),
\end{align}
where \(C_0>0\) is a constant independent of \(n\).
\end{lemma}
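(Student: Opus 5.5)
The three bounds follow from writing $f_n-f$ and the derivatives of $f_n$ as averages against the product kernel $\rho^n(s,\tilde x,\tilde y):=\rho_1^n(s)\rho_2^n(\tilde x)\rho_2^n(\tilde y)$. This kernel lives at scale $n^{-2}$ in time and $n^{-1}$ in space, which matches the parabolic H\"older scaling $C^{\vartheta/2,\vartheta,\vartheta}$. Throughout, $v$ is a frozen parameter: it is never mollified, so every bound simply carries the weight $1+|y|^p+|v|^p$ inherited from $f$. The only thing to watch is that the shift $y-\tilde y$ changes this weight by at most a constant. Indeed $|y-\tilde y|^p\le C(|y|^p+|\tilde y|^p)$, and the moments $\int|\tilde y|^{k}\rho_2^n(\tilde y)\dif\tilde y=n^{-k}\int|u|^k\rho_2(u)\dif u$ are bounded uniformly in $n\ge1$. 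This uses that $\rho_2$ has finite moments, which follows from $|\nabla\rho_2|\le C\rho_2$: this forces exponential-type decay, or one may simply take compact support.

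\emph{Estimate \eqref{eq:mol0}.} I write
\[
f_n-f=\int\bigl[f(t-s,x-\tilde x,y-\tilde y,v)-f(t,x,y,v)\bigr]\rho^n\,\dif s\,\dif\tilde x\,\dif\tilde y .
\]
\begin{itemize}
\item If $\vartheta\le1$, the H\"older bounds applied in each variable separately give the integrand bound $C(1+|y|^p+|\tilde y|^p+|v|^p)\bigl(|s|^{\vartheta/2}+|\tilde x|^\vartheta+|\tilde y|^\vartheta\bigr)$. Scaling then gives $n^{-\vartheta}$, since $|s|^{\vartheta/2}$ is of order $(n^{-2})^{\vartheta/2}$.
\item If $1<\vartheta\le2$, I Taylor expand to first order in $(\tilde x,\tilde y)$. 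The linear terms $\nabla_xf\cdot\tilde x$ and $\nabla_yf\cdot\tilde y$ integrate to zero because $\rho_2$ is radial. The remainders are $\le C|\tilde x|^{\vartheta}$ and $\le C|\tilde y|^\vartheta$ by the $(\vartheta-1)$-H\"older continuity of the gradients. The time part needs only $\vartheta/2\le1$ H\"older continuity and again gives $n^{-\vartheta}$.
\end{itemize}

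\emph{Estimates \eqref{eq:mol1} and \eqref{eq:mol2}.} I move derivatives onto the kernel: $\nabla_x f_n=\int f\,\nabla\rho_2^n(\tilde x)\cdots$, and $\nabla\rho_2^n=n^{d+1}(\nabla\rho_2)(n\cdot)$. Because $\int\nabla\rho_2^n=0$, I can subtract $f(t,x,y,v)$ and bound
\[
|\nabla_xf_n|\le\int|f(\cdot-\cdot)-f|\,|\nabla\rho_2^n|\,\dif s\,\dif\tilde x\,\dif\tilde y\le C\,n\cdot n^{-(\vartheta\wedge1)}(1+|y|^p+|v|^p).
\]
Here I use $|\nabla\rho_2|\le C\rho_2$ to reduce to the same moment integrals as before, with $\vartheta$ capped at $1$. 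If $\vartheta>1$, I do not move the derivative at all: $\nabla_xf_n=(\nabla_xf)_n$ is bounded directly, which gives $n^0$. The $y$-derivative is treated identically.

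For the second derivatives:
\begin{itemize}
\item If $\vartheta\le1$, I move both derivatives onto the kernel, which gives $n^2\cdot n^{-\vartheta}$. The cancellation needed here is $\int\nabla^2\rho_2^n=0$.
\item If $\vartheta>1$, I move one derivative onto $f$ and one onto the kernel. Using the $(\vartheta-1)$-H\"older continuity of $\nabla f$ and $\int\nabla\rho_2^n=0$, this gives $n\cdot n^{-(\vartheta-1)}=n^{2-\vartheta}$.
\item For $\partial_t f_n=\int f\,(\rho_1^n)'\cdots$, the derivative costs $n^2$ and the time-H\"older gain is $n^{-2\cdot\vartheta/2}=n^{-\vartheta}$, so again $n^{2-\vartheta}$.
\end{itemize}

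The main point requiring care is the case $\vartheta>1$ for $\partial_t f_n$. The time regularity of $f$ is only $\vartheta/2\le1$, so the bound must come purely from the time-H\"older increment. This works, since no derivative in $t$ of $f$ is ever needed.
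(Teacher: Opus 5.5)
Your argument is the standard mollifier computation that the paper does not write out (it only cites \cite[Lemma~4.1]{RX}), and the analytic core is correct. Specifically, bounding $\partial_tf_n$ through the time increment alone, the radial cancellation of the linear Taylor terms when $\vartheta>1$, and splitting the two spatial derivatives between $f$ and the kernel when $\vartheta>1$ all give the stated powers of $n$.

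One supporting claim is false, however, and it concerns a property you genuinely need. The hypothesis $|\nabla\rho_2|\le C\rho_2$ says that $\log\rho_2$ is Lipschitz. By Gr\"onwall along rays it therefore gives the \emph{lower} bound $\rho_2(x)\ge\rho_2(0)\mathrm e^{-C|x|}$, and in particular $\rho_2>0$ everywhere. It does not force any decay. For instance, a normalized $\rho_2(x)=c(1+|x|^2)^{-(d+1)/2}$ is smooth and radial and satisfies $|\nabla^k\rho_2|\le C_k\rho_2$ for every $k$, yet $\int|x|\rho_2(x)\dif x=\infty$. With such a kernel several things break:
\begin{itemize}
\item the moments $\int|\tilde y|^p\rho_2^n(\tilde y)\dif\tilde y$, $p\ge1$, which you use to absorb the polynomial weight, are infinite;
\item the first moment behind the radial cancellation for $\vartheta>1$ is infinite;
\item $f_n$ need not even be defined.
\end{itemize}

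Your fallback ``take compact support'' does not work either. By the lower bound above, a compactly supported kernel cannot satisfy $|\nabla^k\rho_2|\le C_k\rho_2$, which is exactly what you then invoke to write $|\nabla^k\rho_2^n|\le C_kn^k\rho_2^n$.

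The repair is easy; either of the following works.
\begin{itemize}
\item Make finite moments of all orders part of the choice of kernel, e.g.\ $\rho_2(x)=c\,\mathrm e^{-\sqrt{1+|x|^2}}$ and likewise for $\rho_1$. This kernel does satisfy $|\nabla^k\rho|\le C_k\rho$.
\item Use compactly supported kernels and replace that bound by $|\nabla^k\rho_2^n(\tilde x)|\le C_kn^{d+k}\mathbf 1_{\{|\tilde x|\le R/n\}}$, which yields the same powers of $n$.
\end{itemize}

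You should also extend $f$ to $t<0$, since $\rho_1>0$ on all of $\R$ under the stated hypotheses. An even reflection in $t$ preserves the $\vartheta/2$-H\"older bound. With these adjustments the rest of your proof goes through.
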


\vspace{2mm}
Let \(f(t,x,y,v)\) satisfy the centering condition
\begin{equation}\label{eq:cenf}
\int_{\R^d}f(t,x,y,v)\,\zeta^x(\dif v)=0, \qquad \forall(t,x,y)\in\R_+\times\R^{2d},
\end{equation}
where $\zeta^x=\cN(0,M(x))$ is the unique invariant measure of the frozen process \eqref{eq:frozenv}. Consider the following Poisson equation:
\begin{equation}\label{eq:poif}
\sL_0(x)\Psi(t,x,y,v)=-f(t,x,y,v),
\end{equation}
where the operator $\sL_0(x)$ is defined by \eqref{eq:L0} and $(t,x,y)$ are regarded as parameters. For notational convenience, set
\begin{align*}
\cH_f(t,x,y,v)&:=
\big\langle \nabla_x\Psi(t,x,y,v),v\big\rangle+\big\langle \nabla_v\Psi(t,x,y,v),F(x)\big\rangle\no\\
&\quad+\big\langle \nabla_y\Psi(t,x,y,v),h(x,v)\big\rangle\no\\
&\quad+\nabla_{yv}^2\Psi(t,x,y,v):\big(\nabla a(x)\big[y+\gamma^{-1}(x)v\big]\sigma^*(x)\big),
\end{align*}
and define its average by
\begin{align*}
\bar\cH_f(t,x,y):=\int_{\R^d}\cH_f(t,x,y,v)\,\zeta^x(\dif v),
\end{align*}
where \(h(x,v)\) and \(\nabla a(x)[y+\gamma^{-1}(x)v]\) are defined by \eqref{eq:defh} and \eqref{eq:day}, respectively.

\vspace{2mm}

The following proposition provides the foundational fluctuation estimates for centered functions.

\begin{proposition}\label{prop:flu}
Assume {\bf(A1)} and {\bf(A2)} hold, and suppose that \(f\) satisfies
\eqref{eq:cenf}.

\vspace{1mm}
\noindent(i) If \(f\in C_p^{\b/2,\b,\b,\d}\) with \(\b,\d\in (0,1]\), then we have
\begin{equation}\label{eq:est1}
\left|\E\left(\int_{0}^{t}f(s,X^{\eps}_{s},Y_s^\eps,V^{\eps}_{s})\dif s\right)\right|
\leq C_t\,\eps^{\b/2}.
\end{equation}

\vspace{1mm}
\noindent(ii) If \(f\in C_p^{(1+\b)/2,1+\b,1+\b,\d}\) with \(\b,\d\in (0,1]\), then we have
\begin{equation}\label{eq:est2}
\left|\E\left(\frac{1}{\sqrt{\eps}}\int_{0}^{t}f(s,X^{\eps}_{s},Y_s^\eps,V^{\eps}_{s})\dif s\right)
-\E\left(\int_{0}^{t}\bar{\cH}_f(s,X^{\eps}_{s},Y^{\eps}_{s})\dif s\right)\right|
\leq C_t\,\eps^{\b/2},
\end{equation}
where $C_t>0$ is a constant independent of \(\eps\) and \(\d\).
\end{proposition}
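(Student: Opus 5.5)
The plan is the Poisson-equation (corrector) method combined with the mollification of Lemma~\ref{lem:molf}. First I write down the semimartingale dynamics of the slow pair. From \eqref{eq:defY}, $Y^\eps$ has drift $\eps^{-1/2}[B(\bar X_s)-B(X_s^\eps)]+\eps^{-1/2}h(X_s^\eps,V_s^\eps)$ and diffusion coefficient $\eps^{-1/2}[a(\bar X_s)-a(X_s^\eps)]$. By \eqref{eq:rem2} applied to $f=B$ and $f=a$, these equal $\nabla B(X^\eps)[Y^\eps+\Gamma]$ and $\nabla a(X^\eps)[Y^\eps+\gamma^{-1}V^\eps]$, up to errors $C\eps^{\beta/2}(1+|Z^\eps|^{1+\beta})$. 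All error moments are controlled by Lemma~\ref{lem:monest}.

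Next, for a centered $f$, I solve \eqref{eq:poif} through $\Psi(t,x,y,v)=\int_0^\infty \E f(t,x,y,V^{x,v}_s)\,\dif s$, where $V^{x,v}$ is the frozen OU process \eqref{eq:frozenv}. I need three properties of $\Psi$. It is centered with polynomial growth in $v$, by exponential ergodicity of the OU process under \textbf{(A1)}. It has as many $(t,x,y)$-derivatives and Hölder regularity as $f$, since the parameter dependence of the OU semigroup is smooth because $\gamma,\sigma\in C_b^{2+\beta}$ (Duhamel as in Lemma~\ref{lem:reg}). It has bounded $v$-derivatives, and mixed $(y,v)$-derivatives, with polynomial weights even though $f$ is only $\delta$-Hölder in $v$: here I use the Gaussian smoothing of the nondegenerate OU process, with $|\nabla_v\E f(V_s^{x,v})|\lesssim (s\wedge1)^{-(1-\delta)/2}e^{-cs}$. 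I apply the same construction to the mollified $f_n$ from \eqref{eq:fn}, which stays centered because the mollification does not act on $v$. This gives $\Psi_n$, whose $(t,x,y)$-derivatives inherit the bounds \eqref{eq:mol0}--\eqref{eq:mol2}.

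I then apply It\^o's formula to $\Psi_n(t,X_t^\eps,Y_t^\eps,V_t^\eps)$ using \eqref{eq:xv}. The $\eps^{-1}$ term is $\eps^{-1}\sL_0\Psi_n=-\eps^{-1}f_n$. The $\eps^{-1/2}$ terms are $\langle\nabla_x\Psi_n,V\rangle+\langle\nabla_v\Psi_n,F\rangle+\langle\nabla_y\Psi_n,h\rangle$, together with the cross-variation $\nabla^2_{yv}\Psi_n:(\nabla a[Y+\gamma^{-1}V]\sigma^*)$. The cross-variation comes from the $O(1)$ noise of $Y$ against the $\eps^{-1/2}$ noise of $V$, after the Taylor replacement. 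These $\eps^{-1/2}$ terms are exactly $\cH_{f_n}$. The $O(1)$ terms are $\partial_t\Psi_n$, $\nabla_y\Psi_n\cdot\nabla B[\cdot]$ and $\tfrac12\nabla_y^2\Psi_n:(\cdots)$. Taking expectations kills the martingales, and rearranging gives
\[
\E\int_0^t f_n\,\dif s=\eps\,\E[\Psi_n(0)-\Psi_n(t)]+\sqrt\eps\,\E\int_0^t\cH_{f_n}\dif s+\eps\,\E\int_0^t(\partial_t+\text{slow generator})\Psi_n\,\dif s+\sqrt\eps\,\mathcal E_n ,
\]
where $\mathcal E_n$ collects the Taylor remainders multiplied by $\nabla_y\Psi_n$ and $\nabla^2_{yv}\Psi_n$.

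For (i), every term on the right is $O(\sqrt\eps\, n^{1-\beta}+\eps n^{2-\beta})$, and replacing $f$ by $f_n$ costs $n^{-\beta}$. Choosing $n=\eps^{-1/2}$ gives $\eps^{\beta/2}$. For (ii), I divide by $\sqrt\eps$. The mollification error becomes $n^{-1-\beta}\eps^{-1/2}=\eps^{\beta/2}$, and the $O(1)$ generator terms contribute $\sqrt\eps\, n^{1-\beta}=\eps^{\beta/2}$, since $f$ now has one more derivative. The remainders $\mathcal E_n$ are $O(\eps^{\beta/2})$ with bounded $\nabla_y\Psi_n$ and $\nabla^2_{yv}\Psi_n$. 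Finally, $\bar\cH_{f_n}-\bar\cH_f$ is $O(n^{-\beta})$, because $\nabla_x\Psi,\nabla_y\Psi$ are $C^\beta$ in $(x,y)$. Replacing $\cH_f$ by its average $\bar\cH_f$ inside the expectation is done by applying part (i) to the centered function $\cH_f-\bar\cH_f$, which lies in $C_p^{\beta/2,\beta,\beta,\delta'}$. This yields \eqref{eq:est2}.

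The main obstacle I expect is the regularity theory of $\Psi$. The difficulty is obtaining uniform, polynomially weighted bounds on the mixed derivatives $\nabla_x\Psi$, $\nabla_y\Psi$, $\nabla^2_{yv}\Psi$ and their $\beta$-Hölder moduli in $x$, uniformly in $\delta$. Differentiating in $x$ perturbs both the drift $\gamma(x)$ and the noise $\sigma(x)$ of the OU process, so I plan to use the explicit Gaussian transition density $\cN(e^{-s\gamma}v,\Sigma_s(x))$ to transfer the derivatives onto the kernel.
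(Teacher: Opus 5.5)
Your scheme matches the paper's: a Poisson corrector for $\sL_0(x)$, It\^o's formula along $(X^\eps,Y^\eps,V^\eps)$ for a mollified corrector, Taylor expansion of $a$ inside the $(y,v)$ cross-variation, $n=\eps^{-1/2}$, and part (i) applied to $\cH_f-\bar\cH_f$. The one real deviation is the order of operations. You mollify $f$ and solve the Poisson equation for $f_n$, whereas the paper solves \eqref{eq:poif} for $f$ and mollifies $\Psi$. That is exactly where a step fails as written.

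\textbf{The gap.} Your claim that $f_n$ ``stays centered because the mollification does not act on $v$'' is false. The centering \eqref{eq:cenf} is with respect to $\zeta^x=\cN(0,M(x))$, which depends on $x$, and \eqref{eq:fn} averages over $x$. So $f_n(t,x,y,\cdot)$ is an average of functions centered under $\zeta^{x-\tilde x}$, not under $\zeta^x$. Already for $d=1$ and $f(x,v)=v^2-M(x)$ one gets $\int f_n\,\dif\zeta^x=M(x)-(M*\rho_2^n)(x)$, which is nonzero in general. The same happens for the functions $\langle\nabla_y\tilde u,h\rangle$ to which the proposition is later applied, since $h$ contains $\nabla(\gamma^{-1})(x)\cdot vv^*$. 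For a non-centered right-hand side, $\int_0^\infty\E f_n(t,x,y,V_s^{x,v})\,\dif s$ diverges, so your $\Psi_n$ does not exist.

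\textbf{The repair.} Since $\int f\,\dif\zeta^x=0$, the defect is
$c_n(t,x,y):=\int f_n(t,x,y,v)\,\zeta^x(\dif v)=\int(f_n-f)(t,x,y,v)\,\zeta^x(\dif v)$.
By \eqref{eq:mol0} it is bounded by $Cn^{-\vartheta}(1+|y|^p)$. Because $M\in C_b^{2+\beta}$ is uniformly positive definite, it also obeys the same bounds \eqref{eq:mol1}--\eqref{eq:mol2} as $f_n$. Working with the recentered $\tilde f_n:=f_n-c_n$ leaves your bookkeeping intact. In (ii), $\vartheta=1+\beta$ gives $|\tilde f_n-f|\lesssim n^{-1-\beta}$, which is what the division by $\sqrt\eps$ needs.

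A related point: in your route $\Psi_n$ is not the mollification of $\Psi$. So $\cH_{\tilde f_n}-\cH_f=O(n^{-\beta})$ must come from linearity, not from H\"older continuity of $\nabla_x\Psi,\nabla_y\Psi$. Namely, $\Psi_n-\Psi$ is the corrector of the centered function $\tilde f_n-f$. That function has size $O(n^{-1-\beta})$ and $(x,y)$-gradient $O(n^{-\beta})$, and you combine this with the a priori bounds you plan to derive from the Gaussian kernel.

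\textbf{Comparison of the two orderings.} Once repaired, your variant has a modest advantage. Since $\sL_0\Psi_n=-\tilde f_n$ holds exactly, you only use $\nabla_v\Psi_n$ and $\nabla^2_{yv}\Psi_n$ quantitatively. One $v$-derivative of the OU semigroup costs $s^{-1/2}$, which is integrable uniformly in $\delta$. The paper's error term $\sL_0\Psi_n-\sL_0\Psi$ instead also needs $(t,x,y)$-regularity of $\nabla_v^2\Psi$. In exchange, the paper's ordering avoids the centering issue entirely, and it imports the corrector regularity from an existing Poisson-equation theorem rather than rederiving it.
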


\begin{proof}  We divide the proof into two steps.

\medskip
\noindent
\emph{Step 1: Proof of \eqref{eq:est1}.}
Since \(f\) satisfies \eqref{eq:cenf}, by \cite[Theorem 3.1]{XY}, there exists a unique centered solution $\Psi\in C_{p}^{\b/2,\b,\b,2+\d}$ to the Poisson equation \eqref{eq:poif}. Let \(\Psi_n\) be the mollifying approximation of \(\Psi\) defined as in \eqref{eq:fn}.
By It\^o's formula and using the Poisson equation \eqref{eq:poif}, we deduce that
\begin{align}\label{eq:est3}
&\E\left(\int_{0}^{t}f(s,X^{\eps}_{s},Y_s^\eps,V^{\eps}_{s})\dif s\right)\no\\
&=\eps\E\left[\Psi_n(0,x,0,\sqrt{\eps}v)
-\Psi_n(t,X^{\eps}_{t},Y^{\eps}_{t},V^{\eps}_{t})+\int_{0}^{t}\p_{s}\Psi_n(s,X^{\eps}_{s},Y^{\eps}_{s},V^{\eps}_{s})\dif s\right]\no\\
&\quad+\sqrt{\eps}\E\left(\int_{0}^{t}\big[\big\langle \nabla_x\Psi_n(s,X^{\eps}_{s},Y^{\eps}_{s},V^{\eps}_{s}),
V^{\eps}_{s}\big\rangle+\big\langle \nabla_v\Psi_n(s,X^{\eps}_{s},Y^{\eps}_{s},V^{\eps}_{s}), F(X^{\eps}_{s})\big\rangle\big]\dif s\right)\no\\
&\quad+\sqrt{\eps}\E\left(\int_{0}^{t}\big\langle \nabla_y\Psi_n(s,X^{\eps}_{s},Y^{\eps}_{s},V^{\eps}_{s}),
B(\bar X_s)-B(X_s^\eps)\big\rangle\dif s\right)\no\\
&\quad+\sqrt{\eps}\E\left(\int_{0}^{t}\big\langle \nabla_y\Psi_n(s,X^{\eps}_{s},Y^{\eps}_{s},V^{\eps}_{s}),
h(X_s^\eps,V_s^\eps)\big\rangle\dif s\right)\no\\
&\quad+\frac{1}{2}\E\left(\int_{0}^{t}\nabla^2_{y}\Psi_n(s,X^{\eps}_{s},Y^{\eps}_{s},V^{\eps}_{s}):\big[\big(a(\bar X_s)-a(X^\eps_s)\big)\big(a(\bar X_s)-a(X^\eps_s)\big)^*\big]\dif s\right)\no\\
&\quad+\E\left(\int_{0}^{t}\nabla^2_{yv}\Psi_n(s,X^{\eps}_{s},Y^{\eps}_{s},V^{\eps}_{s}):\big[\big(a(\bar X_s)-a(X^\eps_s)\big)\sigma^*(X^\eps_s)\big]\dif s\right)\no\\
&\quad+\E\left(\int_{0}^{t}\big(\sL_{0}
\Psi_n-\sL_{0}\Psi\big)(s,X^{\eps}_{s},Y^{\eps}_{s},V^{\eps}_{s})\dif s\right).
\end{align}
Since \(\Psi\) has polynomial growth in \(y\) and \(v\), by estimates \eqref{eq:estv}, \eqref{eq:esty} and \eqref{eq:mol2},
the first term on the right-hand side of \eqref{eq:est3} is bounded
by
\[
C_t\,\eps\big(1+n^{2-\b}\big).
\]
The next three terms, each of which contains an explicit factor
\(\sqrt{\eps}\), are bounded using \eqref{eq:mol1} and Lemma~\ref{lem:reg} by
\[
C_t\sqrt\eps\big(1+n^{1-\b}\big).
\]
Moreover, by the mean value theorem and the boundedness of \(\nabla a\), we have
\begin{equation}\label{eq:estz1}
\big|a(\bar X_s)-a(X_s^\eps)\big|\leq C_1\big|\bar X_s-X_s^\eps\big|=C_1\sqrt{\eps}\big|Z_s^\eps\big|.
\end{equation}
As a result, the two terms involving \(\nabla_y^2\Psi_n\) and
\(\nabla_{yv}^2\Psi_n\) are bounded, respectively, by
\[
C_t\eps n^{2-\beta} \qquad\text{and}\qquad C_t\sqrt{\eps}\,n^{1-\beta}.
\]
Finally,  since \(\nabla_v\Psi, \nabla_{v}^2\Psi\in C_{p}^{\b/2,\b,\b,\d}\) and mollification is performed only in \((t,x,y)\), we have
\[
\nabla_v\Psi_n=(\nabla_v\Psi)_n\qquad\text{and}\qquad \nabla_{v}^2\Psi_n=(\nabla_{v}^2\Psi)_n.
\]
It follows from \eqref{eq:mol0} and the moment estimates that
\begin{align*}
&\left|\E\left(\int_{0}^{t}\big(\sL_{0}
\Psi_n-\sL_{0}\Psi\big)(s,X^{\eps}_{s},Y^{\eps}_{s},V^{\eps}_{s})\dif s\right)\right|\\
&\leq C_2\,n^{-\b}\int_{0}^{t}\big(1+\E\big|Y^{\eps}_{s}\big|^p+\E\big|V^{\eps}_{s}\big|^p\big)\dif s\leq C_t\,n^{-\b}.
\end{align*}
Combining the above estimates with \eqref{eq:est3}, we arrive at
\begin{equation*}
\left|\E\left(\int_{0}^{t}f(s,X^{\eps}_{s},Y_s^\eps,V^{\eps}_{s})\dif s\right)\right|
\leq C_t\big(\eps+\sqrt{\eps}+\eps n^{2-\b}+\sqrt\eps n^{1-\b}+n^{-\b}\big).
\end{equation*}
Taking \(n=\eps^{-1/2}\) and using \(\beta\in(0,1]\),  we get \eqref{eq:est1}.

\medskip
\noindent
\emph{Step 2: Proof of \eqref{eq:est2}.}
We now assume that \(f\in C_p^{(1+\beta)/2,1+\beta,1+\beta,\delta}\). Then the corresponding solution of \eqref{eq:poif} satisfies \(\Psi\in C_p^{(1+\beta)/2,1+\beta,1+\beta,2+\delta}\).
Dividing \eqref{eq:est3} by \(\sqrt{\eps}\) and retaining the leading-order terms, we have
\begin{align}\label{eq:est4}
&\E\left(\frac{1}{\sqrt{\eps}}\int_{0}^{t}f(s,X^{\eps}_{s},Y_s^\eps,V^{\eps}_{s})\dif s\right)\no\\
&=\E\left(\int_{0}^{t}\big[\big\langle \nabla_x\Psi_n(s,X^{\eps}_{s},Y^{\eps}_{s},V^{\eps}_{s}),
V^{\eps}_{s}\big\rangle+\big\langle \nabla_v\Psi_n(s,X^{\eps}_{s},Y^{\eps}_{s},V^{\eps}_{s}), F(X^{\eps}_{s})\big\rangle\big]\dif s\right)\no\\
&\quad+\E\left(\int_{0}^{t}\big\langle \nabla_y\Psi_n(s,X^{\eps}_{s},Y^{\eps}_{s},V^{\eps}_{s}),
h(X_s^\eps,V_s^\eps)\big\rangle\dif s\right)\no\\
&\quad+\E\left(\int_{0}^{t}\nabla^2_{yv}\Psi_n(s,X^{\eps}_{s},Y^{\eps}_{s},V^{\eps}_{s}):
\left[\frac{1}{\sqrt{\eps}}\big(a(\bar X_s)-a(X^\eps_s)\big)\sigma^*(X^\eps_s)\right]\dif s\right)+\sR_t^{\eps,n},
\end{align}
where the remaining term $\sR_t^{\eps,n}$ is defined by
\begin{align*}
\sR_t^{\eps,n}&:=\sqrt{\eps}\E\left[\Psi_n(0,x,0,\sqrt{\eps}v)
-\Psi_n(t,X^{\eps}_{t},Y^{\eps}_{t},V^{\eps}_{t})+\int_{0}^{t}\p_{s}\Psi_n(s,X^{\eps}_{s},Y^{\eps}_{s},V^{\eps}_{s})\dif s\right]\\
&\quad+\E\left(\int_{0}^{t}\big\langle \nabla_y\Psi_n(s,X^{\eps}_{s},Y^{\eps}_{s},V^{\eps}_{s}),
B(\bar X_s)-B(X_s^\eps)\big\rangle\dif s\right)\\
&\quad+\frac{1}{2\sqrt{\eps}}\E\left(\int_{0}^{t}\nabla^2_{y}\Psi_n(s,X^{\eps}_{s},Y^{\eps}_{s},V^{\eps}_{s}):\big[\big(a(\bar X_s)-a(X^\eps_s)\big)\big(a(\bar X_s)-a(X^\eps_s)\big)^*\big]\dif s\right)\\
&\quad+\frac{1}{\sqrt{\eps}}\E\left(\int_{0}^{t}\big(\sL_{0}
\Psi_n-\sL_{0}\Psi\big)(s,X^{\eps}_{s},Y^{\eps}_{s},V^{\eps}_{s})\dif s\right).
\end{align*}
Using Lemma~\ref{lem:molf}, \eqref{eq:estz1} and the same arguments
as in Step~1, we can get
\begin{equation}\label{eq:est5}
|\sR_t^{\eps,n}|\leq C_t\left(\sqrt{\eps}+\sqrt{\eps} n^{1-\b}+\frac{1}{\sqrt{\eps}}n^{-1-\b}\right).
\end{equation}
Applying \eqref{eq:rem2} to \(a\), we have
\begin{equation}\label{eq:alin}
\frac{1}{\sqrt{\eps}}\big(a(\bar X_s)-a(X^\eps_s)\big)
=\nabla a(X^\eps_s)[Y^\eps_s+\Gamma(X^\eps_s,V^\eps_s)]+r_s^\eps,
\end{equation}
where
\begin{equation}\label{eq:alinrem}
\big|r_s^\eps\big|\leq C\eps^{\beta/2}\big(1+\norm{Z_s^\eps}^{1+\beta}\big).
\end{equation}
Substituting \eqref{eq:alin} into \eqref{eq:est4}, and replacing the
derivatives of \(\Psi_n\) in the leading-order terms by those of
\(\Psi\), we have
\begin{equation}\label{eq:est6}
\E\left(\frac1{\sqrt\eps}\int_0^tf(s,X_s^\eps,Y_s^\eps,V_s^\eps)\dif s\right)
=\E\left(\int_0^t\cH_f(s,X_s^\eps,Y_s^\eps,V_s^\eps)\dif s\right)+\tilde\sR_t^{\eps,n}.
\end{equation}
The new remainder consists of \(\sR_t^{\eps,n}\), the Taylor
remainder in \eqref{eq:alin}, and the mollification errors in the
derivatives of \(\Psi\). More precisely,
\begin{align*}
\tilde\sR_t^{\eps,n}&:=\sR_t^{\eps,n}+\E\left(\int_{0}^{t}\nabla^2_{yv}\Psi_n(s,X^{\eps}_{s},Y^{\eps}_{s},V^{\eps}_{s}):
\big[r_s^\eps\sigma^*(X^\eps_s)\big]\dif s\right)\\
&\quad+\E\left(\int_{0}^{t}\big\langle (\nabla_x\Psi_n-\nabla_x\Psi)(s,X^{\eps}_{s},Y^{\eps}_{s},V^{\eps}_{s}),
V^{\eps}_{s}\big\rangle\dif s\right)\\
&\quad+\E\left(\int_{0}^{t}\big\langle(\nabla_v\Psi_n-\nabla_v\Psi)(s,X^{\eps}_{s},Y^{\eps}_{s},V^{\eps}_{s}), F(X^{\eps}_{s})\big\rangle\dif s\right)\\
&\quad+\E\left(\int_{0}^{t}\big\langle (\nabla_y\Psi_n-\nabla_y\Psi)(s,X^{\eps}_{s},Y^{\eps}_{s},V^{\eps}_{s}),
h(X_s^\eps,V_s^\eps)\big\rangle\dif s\right)\\
&\quad+\E\left(\int_{0}^{t}(\nabla^2_{yv}\Psi_n-\nabla^2_{yv}\Psi)(s,X^{\eps}_{s},Y^{\eps}_{s},V^{\eps}_{s}):
\big(\nabla a(X^\eps_s)[Y_s^\eps+\Gamma(X^\eps_s,V^\eps_s)]\sigma^*(X^\eps_s)\big)\dif s\right).
\end{align*}
Since \(\nabla_{yv}^2\Psi\in C_p^{\b/2,\b,\b,\d}\), the moment estimates and \eqref{eq:alinrem} imply
\begin{equation*}
\left|\E\left(\int_{0}^{t}\nabla^2_{yv}\Psi_n(s,X^{\eps}_{s},Y^{\eps}_{s},V^{\eps}_{s}):
\big[r_s^\eps\sigma^*(X^\eps_s)\big]\dif s\right)\right|\leq C_t\,\eps^{\beta/2}.
\end{equation*}
At the same time, applying \eqref{eq:mol0} to \(\nabla_x\Psi,\nabla_y\Psi,\nabla_v\Psi\) and \(\nabla_{yv}^2\Psi\), we have that the remaining mollification errors are  bounded
by \(C_tn^{-\beta}\). Together with \eqref{eq:est5}, this gives
\begin{equation}\label{eq:est7}
|\tilde{\sR}_t^{\eps,n}|\leq C_t\left(\sqrt{\eps}+\eps^{\beta/2}+\sqrt{\eps} n^{1-\b}+\frac{1}{\sqrt{\eps}}n^{-1-\b}+n^{-\b}\right).
\end{equation}
Finally, since \(\cH_f\in C_p^{\b/2,\b,\b,\d}\) and note that the function $\cH_f(t,x,y,v)-\bar{\cH}_f(t,x,y)$ satisfies the centering condition \eqref{eq:cenf},
we derive by \eqref{eq:est1} that
\begin{align*}
\left|\E\left(\int_0^t\big[\cH_f(s,X_s^\eps,Y_s^\eps,V_s^\eps)
-\bar{\cH}_f(s,X_s^\eps,Y_s^\eps)\big]\dif s\right)\right|
\leq C_t\,\eps^{\b/2}.
\end{align*}
Combining this estimate with \eqref{eq:est6} and \eqref{eq:est7}, we conclude that
\begin{align*}
&\left|\E\left(\frac{1}{\sqrt{\eps}}\int_{0}^{t}f(s,X^{\eps}_{s},Y_s^\eps,V^{\eps}_{s})\dif s\right)
-\E\left(\int_{0}^{t}\bar{\cH}_f(s,X^{\eps}_{s},Y^{\eps}_{s})\dif s\right)\right|\\
&\leq\left|\E\left(\int_0^t\big[\cH_f(s,X_s^\eps,Y_s^\eps,V_s^\eps)
-\bar{\cH}_f(s,X^{\eps}_{s},Y^{\eps}_{s})\big]\dif s\right)\right|+|\tilde{\sR}_t^{\eps,n}|\\
&\leq C_t\left(\sqrt{\eps}+\eps^{\beta/2}+\sqrt{\eps} n^{1-\b}+\frac{1}{\sqrt{\eps}}n^{-1-\b}+n^{-\b}\right).
\end{align*}
Taking \(n=\eps^{-1/2}\), we obtain \eqref{eq:est2}.
The proof is finished.
\end{proof}

\subsection{Joint diffusion limit of the slow process \((X_t^\eps,Y_t^\eps)\)}
Recall that $\bar X_t$ and $\bar Y_t$ satisfy (\ref{eq:overdamped}) and (\ref{eq:limitY}), respectively.
The  pair \((\bar X_t,\bar Y_t)\) is the solution of
\begin{equation}\label{eq:limit}
\left\{
\begin{aligned}
&\dif\bar X_t=B(\bar X_t)\,\dif t+a(\bar X_t)\dif W_t,\qquad\qquad\qquad\qquad\qquad\qquad\,\bar X_0=x,\\
&\dif\bar Y_t=\nabla B(\bar X_t)\bar Y_t\dif t+\nabla a(\bar X_t)[\bar Y_t]\dif W_t
+\sqrt{\cQ(\bar X_t)}\dif\tilde W_t,\qquad\bar Y_0=0,
\end{aligned}
\right.
\end{equation}
where \(\tilde W\) is a Brownian motion independent of \(W\).  The infinitesimal generator of \((\bar X_t,\bar Y_t)\) is given by
\begin{align}\label{eq:genxy}
\bar{\sL}
&:=\big\langle B(x),\nabla_x\big\rangle+\big\langle \nabla B(x)y,\nabla_y\big\rangle\no\\
&\quad+\frac12a(x)a^*(x):\nabla_{x}^2+a(x)\big(\nabla a(x)[y]\big)^*:\nabla_{xy}^2\no\\
&\quad+\frac12\left(\nabla a(x)[y]\big(\nabla a(x)[y]\big)^*+\cQ(x)\right):\nabla_{y}^2.
\end{align}
 Note that the mixed
second-order term in \eqref{eq:genxy} comes from the common Brownian
motion \(W\) driving both components of \eqref{eq:limit}.

To establish the weak convergence of \((X_t^\eps,Y_t^\eps)\), we use the Kolmogorov equation associated
with \(\bar\sL\). More precisely,
for \(T>0\) and \(\phi\in C_b^{2+\beta,2+\beta}\), consider the Cauchy problem
\begin{equation}\label{eq:cau}
\left\{ \begin{aligned}
&\p_t u(t,x,y)-\bar{\sL} u(t,x,y)=0,\quad t\in(0,T],\\
&u(0,x,y)=\phi(x,y).
\end{aligned} \right.
\end{equation}
Recall that $\cQ$ may be degenerate. We have the following  result.

\begin{lemma}\label{lem:regu}
Assume {\bf(A1)} and {\bf(A2)} hold. Then, for every \(T>0\) and every
\(\phi\in C_b^{2+\beta,2+\beta}\), the Cauchy problem \eqref{eq:cau}
admits a unique   solution \(u\in C_p^{1+\beta/2,\,2+\beta,\,2+\beta}\)
which is given by
\begin{equation}\label{eq:regu-representation}
u(t,x,y)=\E\phi\big(\bar X_t(x),\bar Y_t(x,y)\big),
\end{equation}
where \((\bar X_t(x),\bar Y_t(x,y))\) is the solution of \eqref{eq:limit} starting from \((\bar X_0,\bar Y_0)=(x,y)\).

Moreover, there exist \(p\geq1\) and \(C_T>0\) such that  for all \(t\in[0,T]\) and \(x,y\in\R^d\),
\begin{equation}\label{eq:regu-growth}
\begin{aligned}
&|\partial_tu(t,x,y)|
+|\nabla_x u(t,x,y)|+|\nabla_y u(t,x,y)|+|\nabla_x^2 u(t,x,y)|\\
&+|\nabla_y^2 u(t,x,y)|+|\nabla_{xy}^2 u(t,x,y)|
\leq C_T(1+|y|^p),
\end{aligned}
\end{equation}
In addition, for \(H\in\{\nabla_{x}^2u,\nabla_{xy}^2u,\nabla_{y}^2u\}\),
the following estimates hold:  for  \(x,x',y,y'\in\R^d\),
\begin{align}
|H(t,x,y)-H(t,x',y)|
&\leq C_T(1+|y|^p)|x-x'|^\beta,\label{eq:regu-holder-x}\\
|H(t,x,y)-H(t,x,y')|&\leq C_T(1+|y|^p+|y'|^p)|y-y'|^\beta.\label{eq:regu-holder-y}
\end{align}
The analogous parabolic H\"older estimates in the time variable also
hold, with exponent \(\beta/2\).
\end{lemma}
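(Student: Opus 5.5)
We will prove Lemma~\ref{lem:regu} probabilistically, taking \eqref{eq:regu-representation} as the definition of \(u\) and obtaining every regularity property by differentiating the flow of \eqref{eq:limit} with respect to the initial data. The degeneracy of \(\cQ\) rules out parabolic Schauder theory, so everything rests on the structure of \eqref{eq:limit}. The \(x\)-equation is autonomous with coefficients \(B,a\in C_b^{1+\beta}\), and in fact \(a\in C_b^{2+\beta}\) by Lemma~\ref{lem:reg}. Given \(\bar X\), the \(y\)-equation is linear with bounded coefficients \(\nabla B,\nabla a,\sqrt{\cQ}\). Thus \(\bar Y_t(x,y)=J_t(x)y+\bar Y_t(x,0)\), where \(J_t\) solves the homogeneous linear SDE. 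This affine dependence on \(y\) is the source of the polynomial growth \((1+|y|^p)\) in \eqref{eq:regu-growth}.

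\emph{Step 1: flow derivatives.} We differentiate the flow with respect to \(x\) and \(y\) up to second order and derive \(L^q\) bounds on the derivative processes via BDG and Gronwall. \(\nabla_y\bar Y_t=J_t\) is bounded in every \(L^q\), and \(\nabla_y^2\bar Y\equiv0\). The mixed derivative \(\nabla_x J_t\) involves \(\nabla^2B\) and \(\nabla^2 a\). Derivatives in \(x\) of \(\bar Y_t\) also involve \(\nabla^2 B\) and \(\nabla^2 a\) multiplied by \(\bar Y\), together with \(\nabla\sqrt{\cQ}\).

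\emph{Step 2: growth bounds and H\"older continuity.} We plug the flow derivatives into the chain rule for \(u(t,x,y)=\E\phi(\bar X_t,\bar Y_t)\), using \(\phi\in C_b^{2+\beta,2+\beta}\) and moment bounds \(\E|\bar Y_t(x,y)|^q\le C(1+|y|^q)\). This gives the first- and second-order part of \eqref{eq:regu-growth}. The H\"older estimates \eqref{eq:regu-holder-x}--\eqref{eq:regu-holder-y} follow from \(L^q\)-H\"older continuity of the second-order flow derivatives in the initial point. In \(y\) this is immediate from the affine structure. In \(x\) it comes from the \(\beta\)-H\"older continuity of \(\nabla^2 a\) and, for the first derivatives, of \(\nabla B\) and \(\nabla\cQ\).

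\emph{Step 3: PDE and time derivative.} We show that \(u\) satisfies \eqref{eq:cau} through the Markov property: \(u(t+h)-u(t)=\E[u(t,\bar X_h,\bar Y_h)]-u(t)\). Applying It\^o's formula to \(u(t,\cdot,\cdot)\), which is \(C^2\) with polynomial growth by Step 2, and letting \(h\to0\) gives \(\partial_t u=\bar\sL u\). The bound on \(\partial_t u\) then follows because \(\bar\sL\) has coefficients of at most quadratic growth in \(y\). The \(\beta/2\) time-H\"older estimate for \(H\) follows the same way, by combining the Markov property with the spatial \(\beta\)-H\"older bounds and \(\E|(\bar X_h,\bar Y_h)-(x,y)|^\beta\le Ch^{\beta/2}(1+|y|^\beta)\). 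For uniqueness within the class \(C_p^{1+\beta/2,2+\beta,2+\beta}\), we apply It\^o's formula to \(v(t-s,\bar X_s,\bar Y_s)\) for the difference \(v\) of two solutions. Polynomial growth together with the moment bounds justifies taking expectations and yields \(v\equiv0\).

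\emph{Main obstacle.} The hard point is a regularity mismatch: \(B\), \(\cQ\) and \(\sqrt{\cQ}\) are only \(C^{1+\beta}\), whereas second \(x\)-derivatives of the flow formally need \(\nabla^2B\) and \(\nabla^2\sqrt{\cQ}\). In addition, \(\sqrt{\cQ}\) may not be differentiable where \(\cQ\) degenerates. We handle \(\sqrt{\cQ}\) first, by replacing it with any Lipschitz \(C^{1+\beta}\) factor \(\Sigma\) satisfying \(\Sigma\Sigma^*=\cQ\). Such a factor exists from the explicit form \eqref{eq:defQ}: \(\Sigma=(\cR_{\cdot j\cdot}M^{1/2})\) is a \(d\times d^2\) matrix in \(C_b^{1+\beta}\). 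The law of \((\bar X,\bar Y)\), and hence \(u\), is unchanged. To obtain second derivatives under only \(C^{1+\beta}\) drift, we mollify the coefficients as in Lemma~\ref{lem:molf} and work with the ellipticity of \(aa^*\) (from (A1), via \(a=\gamma^{-1}\sigma\)) in the \(x\)-direction. For the \(x\)-marginal we use Bismut--Elworthy--Li-type integration by parts to transfer one derivative onto \(\phi\) or onto the noise. This gives uniform \(C^{2+\beta}\) bounds for the mollified \(u_n\), and we then pass to the limit. If that route is too heavy, we fall back on a Schauder estimate for the \(x\)-variable, which is uniformly parabolic, with \(y\) treated as a parameter, combined with the explicit affine structure in \(y\).
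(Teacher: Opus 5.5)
Your primary route does not close, and the regularity deficit is larger than the one you flag. The $\bar Y$-equation in \eqref{eq:limit} has coefficients $\nabla B(\bar X)$ and $\nabla a(\bar X)$, so each $x$-derivative of the flow costs one extra derivative of $B$ and $a$. Already $\nabla_x\bar Y_t$ solves an equation containing $\nabla^2B(\bar X_t)[\nabla_x\bar X_t]\bar Y_t$, which does not exist because \textbf{(A2)} only gives $B\in C_b^{1+\beta}$. And $\nabla_x^2\bar Y_t$ would require $\nabla^3B$, $\nabla^3a$ and $\nabla^2\Sigma$. So you are two derivatives short on $B$ and one on $a$, not merely missing $\nabla^2B$ and $\nabla^2\sqrt{\cQ}$.

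This has three consequences. The bookkeeping in Step 2 (H\"older continuity of $\nabla^2a$, $\nabla B$, $\nabla\cQ$) is wrong. The time-H\"older argument in Step 3 is incomplete for the same reason, since $\nabla^2$ cannot be moved through $u(t+h)=\E\,u(t,\bar X_h,\bar Y_h)$ without flow derivatives. Finally, the mollification plus Bismut--Elworthy--Li repair is asserted, not carried out, and it does not remove the deficit. Integration by parts is only available against $W$, since the $\tilde W$-direction may be degenerate. The Malliavin derivative of $\bar Y$ along $W$ again contains $\nabla^2B(\bar X)$, and at second order $\nabla^3B$ and $\nabla^3a$. So nothing in your sketch yields $C^{2+\beta}$ bounds for $u_n$ that are uniform in $n$.

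Your fallback (Schauder in $(t,x)$ with $y$ frozen) is the paper's route, but it lacks the step that makes it work. To apply Schauder to \eqref{eq:regu-x-equation}, its right-hand side must be $\beta$-H\"older in $x$ with polynomial weight in $y$. That right-hand side contains $\nabla_yu$, $\nabla_{xy}^2u$ and $\nabla_y^2u$, so you need $\nabla_yu\in C^{1+\beta}$ and $\nabla_y^2u\in C^{\beta}$ in $x$.

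The affine structure gives $\nabla_yu=\E[\nabla_y\phi\,J_t^x]$ and $\nabla_y^2u=\E[(J_t^x)^*\nabla_y^2\phi\,J_t^x]$. This yields boundedness and H\"older continuity in $y$, but not the required $x$-regularity. The process $J_t^x$ is driven by $\nabla B(\bar X_t(x))$, so it is not differentiable in $x$, and a direct pathwise estimate only gives $\beta^2$-H\"older continuity of $\nabla_y^2u$ in $x$.

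The paper gets this regularity analytically. The coefficients of $\bar{\sL}$ are polynomial in $y$, so one can differentiate \eqref{eq:cau} once and twice in $y$. The resulting equations for $\nabla_yu$ and $\nabla_y^2u$ keep the uniformly elliptic principal part $\frac12aa^*:\nabla_x^2$. Schauder estimates are applied to this system first, and only then to \eqref{eq:regu-x-equation}.

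Your factorization $\Sigma\Sigma^*=\cQ$ with $\Sigma=(\cR_{\cdot j\cdot}M^{1/2})$ is correct. On the PDE route, however, it is unnecessary, since only $\cQ$ enters $\bar{\sL}$.
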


\begin{proof}
The uniqueness and the probabilistic representation \eqref{eq:regu-representation} follow from It\^o's formula. Below we prove the stated regularity estimates. We only present the main steps,  emphasizing why the possible degeneracy
in the \(y\)-variables does not prevent the stated regularity.

Let us first consider the
$y$-direction. For simplicity, write
\[
A(x):=\nabla B(x),
\qquad
G(x)[y]:=\nabla a(x)[y].
\]
Then the limiting equation for the second component can be written as
\[
\dif\bar Y_t
=
A(\bar X_t)\bar Y_t\,\dif t
+
G(\bar X_t)[\bar Y_t]\,\dif W_t
+
\sqrt{\cQ(\bar X_t)}\,\dif\tilde W_t.
\]
Since the equation for \(\bar X_t\) is independent of \(\bar Y_t\),  we have
\begin{equation}\label{eq:Y-affine}
\bar Y_t(x,y)
=
J_t^x y+\bar Y_t(x,0),
\end{equation}
where \(J_t^x\) is the matrix valued solution of
\[
\dif J_t^x
=
A(\bar X_t(x))J_t^x\,\dif t
+
\nabla a(\bar X_t(x))[J_t^x]\,\dif W_t,
\qquad
J_0^x=\mI_d.
\]
Since \(A\) and \(\nabla a\) are bounded,  by standard moment estimates we have for every \(q\geq2\),
\[
\sup_{0\leq t\leq T}\E|J_t^x|^q\leq C_{q,T},
\qquad
\sup_{0\leq t\leq T}
\E|\bar Y_t(x,y)|^q
\leq C_{q,T}(1+|y|^q).
\]
Differentiating the representation \eqref{eq:regu-representation}
with respect to \(y\)-variable  and using \eqref{eq:Y-affine}, we obtain
\begin{align*}
\nabla_yu(t,x,y)
&=
\E\left[
\nabla_y\phi\big(\bar X_t(x),\bar Y_t(x,y)\big)J_t^x
\right],\\
\nabla_{y}^2u(t,x,y)
&=
\E\left[
(J_t^x)^*
\nabla_{y}^2\phi\big(\bar X_t(x),\bar Y_t(x,y)\big)J_t^x
\right].
\end{align*}
These identities prove the required \(y\)-regularity and the
corresponding polynomial bounds. Notice that no regularity is gained
in the \(y\)-direction: the solution inherits its \(C^{2+\beta}\)
regularity in \(y\) directly from the terminal function \(\phi\).

We next consider the \(x\)-direction, and we focus on the a priori estimates. By {\bf(A1)}, \(a(x)a^*(x)\) is uniformly  elliptic. To handle the possible degeneracy in \(y\),
we differentiate the equation once and twice with respect to \(y\).
Since the coefficients of \(\bar{\mathscr L}\) are at most
linear in \(y\) in the drift and mixed-diffusion terms, and at most
quadratic in \(y\) in the \(\nabla_{y}^2\)-term, the resulting system for \(u\), \(\nabla_y u\), and \(\nabla_y^2 u\) is triangular, and its principal part in the \(x\)-variables remains
\[
\frac12 a(x)a^*(x):\nabla_x^2,
\]
which is still uniformly elliptic. Applying the parameter-dependent Schauder estimates for uniformly
elliptic systems  and  using the moment bounds, we have
\begin{equation*}
\begin{aligned}
\nabla_yu\in C_p^{(1+\beta)/2,\,1+\beta,\,1+\beta},\,\,
\nabla_{y}^2u\in C_p^{\beta/2,\,\beta,\,\beta}.
\end{aligned}
\end{equation*}
In particular, all terms containing \(y\)-derivatives in
\(\bar{\mathscr L}u\), including
\[
\langle A(x)y,\nabla_yu\rangle,
\qquad
a(x)\big(G(x)[y]\big)^*:\nabla_{xy}^2u,
\]
and
\[
\frac12
\left(
G(x)[y]\big(G(x)[y]\big)^*+\cQ(x)
\right):\nabla_{y}^2u,
\]
are \(\beta\)-H\"older continuous in \(x\), with bounds of the form
\(C_T(1+|y|^p)\).
Then for each fixed \(y\), we   rewrite \eqref{eq:cau}   as
\begin{equation}\label{eq:regu-x-equation}
\begin{aligned}
\partial_tu
-\langle B(x),\nabla_xu\rangle
-\frac12a(x)a^*(x):\nabla_{x}^2u
={}&
\langle A(x)y,\nabla_yu\rangle+
a(x)\big(G(x)[y]\big)^*:\nabla_{xy}^2u\\
&+
\frac12
\left(
G(x)[y]\big(G(x)[y]\big)^*+\cQ(x)
\right):\nabla_{y}^2u.
\end{aligned}
\end{equation}
The right-hand side belongs to the weighted parabolic
\(C^{\beta/2,\beta}\)-class in \((t,x)\). Since
\[
B,a\in C_b^{1+\beta},
\qquad
A=\nabla B,\ \nabla a,\ \cQ\in C_b^\beta,
\]
the classical Schauder estimate applied to
\eqref{eq:regu-x-equation} gives that
$
u\in C_p^{1+\beta/2,\,2+\beta,\,2+\beta},
$
as well as the estimates \eqref{eq:regu-growth}--\eqref{eq:regu-holder-y}.
 This completes the
proof.
\end{proof}

Now we prove  the weak convergence of
\((X_t^\varepsilon,Y_t^\varepsilon)\).

\begin{proposition}\label{prop:slow}
Assume {\bf(A1)} and {\bf(A2)} hold. Then, for every \(T>0\) and \(\phi\in C_b^{2+\b,2+\b}\), there exists a constant \(C_{T,\phi}>0\), independent of \(\eps\), such that
\begin{equation*}
\sup_{0\leq t\leq T}\left|\E\phi(X_t^\eps,Y_t^\eps)-\E\phi(\bar X_t,\bar Y_t)\right|
\leq C_{T,\phi}\,\eps^{\beta/2},
\end{equation*}
where \((\bar X_t,\bar Y_t)\) is the solution to \eqref{eq:limit}.
\end{proposition}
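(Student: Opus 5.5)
Fix $t\in(0,T]$ and let $u$ be the solution of the Cauchy problem \eqref{eq:cau} from Lemma~\ref{lem:regu}. Set $\tilde u(s,x,y):=u(t-s,x,y)$ for $s\in[0,t]$. Then $\E\phi(\bar X_t,\bar Y_t)=u(t,x,0)=\tilde u(0,x,0)$ and $\E\phi(X_t^\eps,Y_t^\eps)=\E\tilde u(t,X_t^\eps,Y_t^\eps)$. The difference is therefore obtained by applying It\^o's formula to $\tilde u(s,X_s^\eps,Y_s^\eps)$ and comparing the result with $(\p_s+\bar\sL)\tilde u=0$. The dynamics needed for this come from \eqref{eq:xv} and \eqref{eq:defY}:
\[
\dif X_s^\eps=\frac{1}{\sqrt\eps}V_s^\eps\dif s,\qquad
\dif Y_s^\eps=\frac{B(\bar X_s)-B(X_s^\eps)}{\sqrt\eps}\dif s+\frac{h(X_s^\eps,V_s^\eps)}{\sqrt\eps}\dif s+\frac{a(\bar X_s)-a(X_s^\eps)}{\sqrt\eps}\dif W_s .
\]
Since $X^\eps$ has no martingale part, the resulting drift terms are $\eps^{-1/2}\langle\nabla_x\tilde u,V_s^\eps\rangle$, the $B$- and $h$-terms paired with $\nabla_y\tilde u$, and $\frac12\nabla_y^2\tilde u:(\eps^{-1/2}\Delta a)(\eps^{-1/2}\Delta a)^*$, where $\Delta a:=a(\bar X_s)-a(X_s^\eps)$.

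We handle these terms one at a time.

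\emph{Drift $B$.} By \eqref{eq:rem2}, $\eps^{-1/2}(B(\bar X_s)-B(X_s^\eps))=\nabla B(X_s^\eps)[Y_s^\eps+\Gamma]+O(\eps^{\beta/2})$. The part $\nabla B\,Y$ matches $\bar\sL$. The part $\langle\nabla_y\tilde u,\nabla B\,\gamma^{-1}V\rangle$ is linear in $V$ and hence centered, so Proposition~\ref{prop:flu}(i) makes its contribution $O(\eps^{\beta/2})$.

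\emph{Quadratic variation.} By \eqref{eq:alin}, the diffusion term becomes $\frac12\nabla_y^2\tilde u:(G[Y+\gamma^{-1}V])(G[Y+\gamma^{-1}V])^*$ plus an $O(\eps^{\beta/2})$ error, where $G=\nabla a$. The cross terms are linear in $V$ and centered. The term quadratic in $V$ is replaced by its $\zeta^x$-average $\nabla a[\gamma^{-1}\cdot](M)$-type covariance, again via Proposition~\ref{prop:flu}(i), which requires the H\"older regularity \eqref{eq:regu-holder-x}--\eqref{eq:regu-holder-y} of $\nabla_y^2u$.

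\emph{Singular terms.} The two terms of order $\eps^{-1/2}$ are $f:=\langle\nabla_x\tilde u,v\rangle+\langle\nabla_y\tilde u,h(x,v)\rangle$. This $f$ is centered, and it lies in $C_p^{(1+\beta)/2,1+\beta,1+\beta,\delta}$ by Lemma~\ref{lem:regu}. Proposition~\ref{prop:flu}(ii) then replaces $\eps^{-1/2}\int_0^t f$ by $\int_0^t\bar\cH_f$, up to $O(\eps^{\beta/2})$. This is done with the Poisson solution $\Psi=\langle\nabla_x\tilde u,\gamma^{-1}v\rangle+\langle\nabla_y\tilde u,\Phi\rangle$, which is explicit by \eqref{eq:poivs} and \eqref{poissh}.

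\emph{Identifying the generator.} The main obstacle, and essentially an algebraic verification, is to compute $\bar\cH_f$ and check that it supplies every remaining term of $\bar\sL\tilde u$.

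$\langle\nabla_x\Psi,v\rangle$ averages to $\frac12 aa^*:\nabla_x^2\tilde u$, because $\gamma^{-1}M\gamma^{-*}$ symmetrized equals $aa^*/2$. It also produces terms involving $\nabla_x\gamma^{-1}$ which, together with $\langle\nabla_v\Psi,F\rangle=\langle\nabla_x\tilde u,\gamma^{-1}F\rangle$, should reconstruct $\langle B,\nabla_x\tilde u\rangle$ including the drift $\cS$.

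The $\nabla_y$- and $\nabla_{yv}^2$-terms give the following contributions:
\begin{enumerate}[(a)]
\item the mixed term $a(G[y])^*:\nabla_{xy}^2\tilde u$, from $\nabla^2_{yv}\Psi$ with $y$ and from the $x$-derivative of the $\Phi$ term;
\item further cross terms, which combine with the quadratic-variation average above to produce the full covariance $\cR M\cR^*=\cQ$ in \eqref{eq:defQ}; here the cross-variation between $G[\gamma^{-1}v]$ and $\nabla_v\Phi\,\sigma$ must be tracked carefully;
\item terms like $\langle\nabla_y(\nabla_y\tilde u\cdot\Phi),h\rangle$, which are quadratic in $y$-derivatives and should enter the same $\cQ$ accounting.
\end{enumerate}
I expect this bookkeeping to be the delicate part, and I would organize it by writing $\cR(x)v$ explicitly.

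Once this identification holds, all remaining terms cancel against $(\p_s+\bar\sL)\tilde u=0$. The residual errors are each bounded by $C_T\eps^{\beta/2}$ uniformly in $t\le T$, using the moment bounds of Lemma~\ref{lem:monest} and the polynomial growth \eqref{eq:regu-growth}. The initial term contributes $\tilde u(0,x,0)$ exactly, since $Y_0^\eps=0$ and $X_0^\eps=x$. This gives the claimed bound $C_{T,\phi}\eps^{\beta/2}$.
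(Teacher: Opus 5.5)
Your plan is the paper's proof. It uses It\^o's formula for $\tilde u(s,X_s^\eps,Y_s^\eps)$ with $u$ from Lemma~\ref{lem:regu}, and linearizes the $B$- and $a$-increments by \eqref{eq:rem2}. Proposition~\ref{prop:flu}(i) handles the centered terms linear in $V$ and the quadratic-in-$V$ part of the quadratic variation, which leaves $\cQ_0$ from \eqref{eq:defQ0}. Proposition~\ref{prop:flu}(ii) handles $f=\langle\nabla_x\tilde u,v\rangle+\langle\nabla_y\tilde u,h\rangle$ with the explicit corrector $\Psi=\langle\nabla_x\tilde u,\gamma^{-1}v\rangle+\langle\nabla_y\tilde u,\Phi\rangle$. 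The analytic part is in place. What you leave as an expectation is the identification of $\bar\cH_f$ (the paper's claim \eqref{eq:defhubar}). That is exactly where the covariance $\cQ$ of \eqref{eq:defQ} is produced, and your sketch of it lacks one ingredient and contains two slips.

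The missing ingredient is what makes your item (c) work. The term $\nabla_y^2\tilde u:\Phi h^*$ averages a quartic polynomial in $v$, and it must be converted into a covariance of martingale coefficients. The paper uses the carr\'e du champ identity: by the product rule for $\sL_0(x)$ and \eqref{poissh},
\[
\sL_0(x)(\Phi_i\Phi_r)=-\Phi_ih_r-\Phi_rh_i+(\sU\sU^*)_{ir},\qquad \sU:=\nabla_v\Phi\,\sigma.
\]
Integrating against the invariant measure $\zeta^x$ gives $\int(\Phi h^*+h\Phi^*)\,\dif\zeta^x=\int\sU\sU^*\,\dif\zeta^x$; a direct Isserlis computation using \eqref{eq:lyapM} and \eqref{eq:lyapP} also works. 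Write $\cA:=\nabla a(x)[y+\gamma^{-1}(x)v]$. The $\nabla_{yv}^2\Psi$-term contributes $\nabla_y^2\tilde u:\sU\cA^*$, and after symmetrization all $\nabla_y^2\tilde u$-terms of $\cH_f$ average to
\[
\tfrac12\nabla_y^2\tilde u:\int\bigl[(\cA+\sU)(\cA+\sU)^*-\cA\cA^*\bigr]\dif\zeta^x=\tfrac12\nabla_y^2\tilde u:(\cQ-\cQ_0),
\]
since $\cA+\sU=\nabla a(x)[y]+\cR(x)v$. Adding the $\cQ_0$ from the quadratic variation then yields $\cQ$.

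The two slips are these.
\begin{itemize}
\item $\int\gamma^{-1}v\,v^*\,\dif\zeta^x=\gamma^{-1}M$, not $\gamma^{-1}M\gamma^{-*}=N$; the latter is already symmetric and generally differs from $aa^*/2$. The term $\frac12aa^*:\nabla_x^2\tilde u$ comes from $\gamma^{-1}M+M\gamma^{-*}=aa^*$, i.e.\ \eqref{eq:lyapM} multiplied by $\gamma^{-1}$ on the left and $\gamma^{-*}$ on the right.
\item The mixed term $a(\nabla a[y])^*:\nabla_{xy}^2\tilde u$ comes only from $\nabla_{yv}^2$ acting on $\langle\nabla_x\tilde u,\gamma^{-1}v\rangle$, paired with the $\nabla a(x)[y]\sigma^*$ part. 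The $x$-derivative terms of the $\Phi$-part, $\p_{x_jy_i}^2\tilde u\,\Phi_iv_j$ and $\p_{y_i}\tilde u\,\p_{x_j}\Phi_iv_j$, are odd in $v$ and average to zero.
\end{itemize}
With these fixes your argument coincides with the paper's.
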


\begin{proof}
Fix \(T>0\), let \(u\) be the solution to \eqref{eq:cau} and define
\[
\tilde u(t,x,y):=u(T-t,x,y).
\]
Then it is obvious that
\begin{align*}
&\tilde u(T,x,y)=u(0,x,y)=\phi(x,y),\\
&\tilde u(0,x,y)=u(T,x,y)=\E\phi(\bar X_T(x),\bar Y_T(x,y)).
\end{align*}
Applying It\^{o}'s formula and using \eqref{eq:cau}, we deduce that
\begin{align}\label{eq:xy1}
&\E\phi(X_{T}^\eps,Y_{T}^\eps)-\E\phi(\bar X_{T},\bar Y_T)=\E\tilde u(T,X_T^\eps,Y_T^\eps)-\tilde u(0,x,0)\no\\
&=\E\left(\frac{1}{\sqrt{\eps}}\int_0^T\big\langle \nabla_x\tilde u(t,X_t^\eps,Y_t^\eps), V_t^\eps\big\rangle\dif t\right)+\E\left(\frac{1}{\sqrt{\eps}}\int_0^T\big\langle \nabla_y\tilde u(t,X_t^\eps,Y_t^\eps),h(X^\eps_t,V^\eps_t)\big\rangle\dif t\right)\no\\
&\quad+\E\left(\int_0^T\left\langle \nabla_y\tilde u(t,X_t^\eps,Y_t^\eps),\frac{1}{\sqrt{\eps}}\big[B(\bar X_t)-B(X^\eps_t)\big]-\nabla B(X_t^\eps)Y_t^\eps\right\rangle\dif t\right)\no\\
&\quad+\frac{1}{2}\E\bigg(\int_0^T\nabla^2_{y}\tilde u(t,X_t^\eps,Y_t^\eps):\Big[\frac{1}{\eps}\big(a(\bar X_t)-a(X_t^\eps)\big)\big(a(\bar X_t)-a(X_t^\eps)\big)^*\no\\
&\qquad\qquad\qquad-\nabla a(X^\eps_t)[Y^\eps_t]\big(\nabla a(X^\eps_t)[Y^\eps_t]\big)^*-\cQ(X_t^\eps)\Big]\dif t\bigg)\no\\
&\quad-\E\left(\int_0^T\big\langle \nabla_x\tilde u(t,X_t^\eps,Y_t^\eps),B(X_t^\eps)\big\rangle\dif t\right)-\frac{1}{2}\E\left(\int_0^T\nabla^2_{x}\tilde u(t,X_t^\eps,Y_t^\eps):\big[a(X_t^\eps)a^*(X_t^\eps)\big]\dif t\right)\no\\
&\quad-\E\left(\int_0^T\nabla^2_{xy}\tilde u(t,X_t^\eps,Y_t^\eps):\big[a(X^\eps_t)\big(\nabla a(X^\eps_t)[Y^\eps_t]\big)^*\big]\dif t\right).
\end{align}
Define
 \[
 f_u(t,x,y,v):=\langle \nabla_x\tilde u(t,x,y),v\rangle+\langle \nabla_y\tilde u(t,x,y),h(x,v)\rangle.
 \]
By Lemma~\ref{lem:regu}, we have \(f_u\in C_p^{(1+\b)/2,1+\b,1+\b,\d}\). Meanwhile, by the definition \eqref{eq:defh} of \(h\), we can check that \(f_u\) satisfies the centering condition \eqref{eq:cenf}, i.e.,
\begin{equation*}
\int_{\R^d}f_u(t,x,y,v)\,\zeta^x(\dif v)=0.
\end{equation*}
Moreover, the solution of the associated  Poisson equation
\begin{equation*}
\sL_0(x)\Psi_u(t,x,y,v)=-f_u(t,x,y,v)
\end{equation*}
is given by
\begin{equation*}
\Psi_u(t,x,y,v)=
\big\langle \nabla_x\tilde u(t,x,y),\Gamma(x,v)\big\rangle+\big\langle \nabla_y\tilde u(t,x,y),\Phi(x,v)\big\rangle,
\end{equation*}
where $\Phi(x,v)$ satisfies (\ref{poissh}).
Applying estimate \eqref{eq:est2} in Proposition~\ref{prop:flu} to $f_u$, we obtain
\begin{equation}\label{eq:xy2}
\left|\E\left(\frac{1}{\sqrt{\eps}}\int_0^Tf_u(t,X_t^\eps,Y_t^\eps,V_t^\eps)\dif t\bigg)
-\E\bigg(\int_0^T\bar{\cH}_u(t,X_t^\eps,Y_t^\eps)\dif t\right)\right|
\leq C_T\,\eps^{\b/2},
\end{equation}
where \(\bar{\cH}_u\) is given by
\begin{equation*}
\bar{\cH}_u(t,x,y):=\int_{\R^d}\cH_u(t,x,y,v)\,\zeta^x(\dif v),
\end{equation*}
with
\begin{align}\label{eq:defhu}
\cH_u(t,x,y,v)&:=\big\langle \nabla_x\Psi_u(t,x,y,v),v\big\rangle+\big\langle
\nabla_v\Psi_u(t,x,y,v),F(x)\big\rangle\no\\
&\quad+\big\langle \nabla_y\Psi_u(t,x,y,v),h(x,v)\big\rangle\no\\
&\quad+\nabla_{yv}^2\Psi_u(t,x,y,v):\big(\nabla a(x)[y+\Gamma(x,v)]\sigma^*(x)\big).
\end{align}
For simplicity, we state the following claim:

\smallskip
\noindent{\bf Claim:} Let $\cH_u$ be defined by (\ref{eq:defhu}), then its average $\bar{\cH}_u$ is given by
\begin{align}\label{eq:defhubar}
&\bar{\cH}_u(t,x,y)=
\big\langle \nabla_x\tilde u(t,x,y),B(x)\big\rangle+\frac12\nabla_{x}^2\tilde u(t,x,y):\big(a(x)a^*(x)\big)\no\\
&\quad+\nabla_{xy}^2\tilde u(t,x,y):\big[a(x)\big(\nabla a(x)[y]\big)^*\big]+\frac12 \nabla_{y}^2\tilde u(t,x,y):\big(\cQ(x)-\cQ_0(x)\big),
\end{align}
where \(\cQ_0(x)\) is defined by
\begin{equation}\label{eq:defQ0}
\cQ_0(x):=\int_{\R^d}\nabla a(x)[\Gamma(x,v)]\big(\nabla a(x)[\Gamma(x,v)]\big)^*\,\zeta^x(\dif v).
\end{equation}
Once this claim is proven, substituting \eqref{eq:xy2} and \eqref{eq:defhubar} into \eqref{eq:xy1}, and using the resulting cancellations, we have
\begin{equation*}
\big|\E\phi(X_{T}^\eps,Y_{T}^\eps)-\E\phi(\bar X_{T},\bar Y_T)\big|\leq C_T\,\eps^{\b/2}+\cU_1(\eps)+\cU_2(\eps),
\end{equation*}
where
\begin{align*}
&\cU_1(\eps):=\left|\E\left(\int_0^T\left\langle \nabla_y\tilde u(t,X_t^\eps,Y_t^\eps),\frac{1}{\sqrt{\eps}}\big[B(\bar X_t)-B(X^\eps_t)\big]-\nabla B(X_t^\eps)Y_t^\eps\right\rangle\dif t\right)\right|,\\
&\cU_2(\eps):=\frac{1}{2}\bigg|\E\bigg(\int_0^T\nabla^2_{y}\tilde u(t,X_t^\eps,Y_t^\eps):\Big[\frac{1}{\eps}\big(a(\bar X_t)-a(X_t^\eps)\big)\big(a(\bar X_t)-a(X_t^\eps)\big)^*\no\\
&\qquad\qquad\qquad-\nabla a(X^\eps_t)[Y^\eps_t]\big(\nabla a(X^\eps_t)[Y^\eps_t]\big)^*
-\cQ_0(X_t^\eps)\Big]\dif t\bigg)\bigg|.
\end{align*}
For the first term, we deduce that
\begin{align*}
\cU_1(\eps)&\leq\bigg|\E\bigg(\int_0^T\Big\langle \nabla_y\tilde u(t,X_t^\eps,Y_t^\eps),\frac{1}{\sqrt{\eps}}\big[B(\bar X_t)-B(X^\eps_t)\big]-\nabla B(X_t^\eps)Y_t^\eps\\
&\qquad\qquad\quad-\nabla B(X_t^\eps)\Gamma(X_t^\eps,V_t^\eps)\Big\rangle\dif t\bigg)\bigg|\\
&\quad+\left|\E\left(\int_0^T\left\langle \nabla_y\tilde u(t,X_t^\eps,Y_t^\eps),\nabla B(X_t^\eps)\Gamma(X_t^\eps,V_t^\eps)\right\rangle\dif t\right)\right|\\
&=:\cU_{1,1}(\eps)+\cU_{1,2}(\eps).
\end{align*}
Since \(B\in C_b^{1+\b}\), by estimate \eqref{eq:rem2}, Lemma~\ref{lem:regu} and the uniform moment estimates, we obtain
\begin{equation}\label{eq:xy3}
\cU_{1,1}(\eps)\leq C_1\,\eps^{\beta/2}\int_0^T\big(1+\E\norm{Y_t^\eps}^p+\E\norm{Z_t^\eps}^p\big)
\dif t\leq C_T\,\eps^{\beta/2}.
\end{equation}
Furthermore,
\[
(t,x,y,v)\longmapsto\left\langle \nabla_y\tilde u(t,x,y),\nabla B(x)\Gamma(x,v)\right\rangle
\]
belongs to \(C_p^{\beta/2,\beta,\beta,\delta}\). Since \(\int_{\R^d}\Gamma(x,v)\,\zeta^x(\dif v)=0\), using estimate \eqref{eq:est1} in Proposition~\ref{prop:flu}  we can get
\begin{equation*}
\cU_{1,2}(\eps)\leq C_T\,\eps^{\beta/2},
\end{equation*}
which together with \eqref{eq:xy3} yields that
\begin{equation*}
\cU_1(\eps)\leq C_T\,\eps^{\beta/2}.
\end{equation*}
We next estimate \(\cU_2(\eps)\). Recall from \eqref{eq:alin} that
\begin{equation*}
\frac{1}{\sqrt{\eps}}\left(a(\bar X_t)-a(X_t^\eps)\right)
=\nabla a(X_t^\eps)\left[Y_t^\eps+\Gamma(X_t^\eps,V_t^\eps)\right]+r_t^\eps,
\end{equation*}
where
\begin{equation}\label{eq:reps}
\big|r_t^\eps\big|\leq C_2\,\eps^{\beta/2}\big(1+\norm{Z_t^\eps}^{1+\beta}\big).
\end{equation}
Then, we  deduce that
\begin{align*}
&\frac{1}{\eps}\big(a(\bar X_t)-a(X_t^\eps)\big)\big(a(\bar X_t)-a(X_t^\eps)\big)^*\\
&=\nabla a(X_t^\eps)[Y_t^\eps]\big(\nabla a(X_t^\eps)[Y_t^\eps]\big)^*
+\nabla a(X_t^\eps)[Y_t^\eps]\big(\nabla a(X_t^\eps)[\Gamma(X_t^\eps,V_t^\eps)]\big)^*\\
&\quad+\nabla a(X_t^\eps)[\Gamma(X_t^\eps,V_t^\eps)]\big(\nabla a(X_t^\eps)[\Gamma(X_t^\eps,V_t^\eps)]\big)^*
+\nabla a(X_t^\eps)[\Gamma(X_t^\eps,V_t^\eps)]\big(\nabla a(X_t^\eps)[Y_t^\eps]\big)^*\\
&\quad+\nabla a(X_t^\eps)\left[Y_t^\eps+\Gamma(X_t^\eps,V_t^\eps)\right]\big(r_t^\eps\big)^*
+r_t^\eps\big(\nabla a(X_t^\eps)[Y_t^\eps+\Gamma(X_t^\eps,V_t^\eps)]\big)^*+
r_t^\eps(r_t^\eps)^*.
\end{align*}
Substituting this into \(\cU_2(\eps)\), we have
\begin{align}\label{eq:xy4}
\cU_2(\eps)&\leq\frac{1}{2}\left|\E\left(\int_0^T\nabla^2_{y}\tilde u(t,X_t^\eps,Y_t^\eps):\left[\nabla a(X^\eps_t)[Y^\eps_t]
\big(\nabla a(X^\eps_t)[\Gamma(X^\eps_t,V_t^\eps)]\big)^*\right]\right)\right|\no\\
&\quad+\frac{1}{2}\left|\E\left(\int_0^T\nabla^2_{y}\tilde u(t,X_t^\eps,Y_t^\eps):\left[\nabla a(X^\eps_t)[\Gamma(X_t^\eps,V^\eps_t)]
\big(\nabla a(X^\eps_t)[Y_t^\eps]\big)^*\right]\right)\right|\no\\
&\quad+\frac{1}{2}\bigg|\E\bigg(\int_0^T\nabla^2_{y}\tilde u(t,X_t^\eps,Y_t^\eps):\big[\nabla a(X^\eps_t)[\Gamma(X_t^\eps,V^\eps_t)]
\big(\nabla a(X^\eps_t)[\Gamma(X^\eps_t,V_t^\eps)]\big)^*\no\\
&\qquad\qquad\quad-\cQ_0(X_t^\eps)\big]\dif t\bigg)\bigg|\no\\
&\quad+\frac{1}{2}\bigg|\E\bigg(\int_0^T\nabla^2_{y}\tilde u(t,X_t^\eps,Y_t^\eps):
\big[\nabla a(X_t^\eps)[Y_t^\eps+\Gamma(X_t^\eps,V_t^\eps)]\big(r_t^\eps\big)^*\no\\
&\qquad\qquad\quad+r_t^\eps\big(\nabla a(X_t^\eps)[Y_t^\eps+\Gamma(X_t^\eps,V_t^\eps)]\big)^*+r_t^\eps(r_t^\eps)^*\big]\dif t\bigg)\bigg|.
\end{align}
The first three terms on the right-hand side of \eqref{eq:xy4} are centered with respect to
\(\zeta^x\). Indeed,
\[
\int_{\R^d}\Gamma(x,v)\,\zeta^x(\dif v)=0
\]
and  by \eqref{eq:defQ0},
\[
\int_{\R^d}\nabla a(x)[\Gamma(x,v)]\bigl(\nabla a(x)[\Gamma(x,v)]\bigr)^*\,\zeta^x(\dif v)=\cQ_0(x).
\]
Hence, applying \eqref{eq:est1} in Proposition~\ref{prop:flu}, we obtain that the first three terms on the right-hand side of \eqref{eq:xy4} are bounded by \(C_T\,\eps^{\b/2}\).
At the same time,  by \eqref{eq:reps} we have
\begin{align*}
&\frac{1}{2}\bigg|\E\bigg(\int_0^T\nabla^2_{y}\tilde u(t,X_t^\eps,Y_t^\eps):
\big[\nabla a(X_t^\eps)[Y_t^\eps+\Gamma(X_t^\eps,V_t^\eps)]\big(r_t^\eps\big)^*\\
&\qquad\qquad\quad+r_t^\eps\big(\nabla a(X_t^\eps)[Y_t^\eps+\Gamma(X_t^\eps,V_t^\eps)]\big)^*+r_t^\eps(r_t^\eps)^*\big]\dif t\bigg)\bigg|\\
&\leq C_T\,\eps^{\b/2}\int_0^T\big(1+\E\norm{V_t^\eps}^p+\E\norm{Y_t^\eps}^p+\E\norm{Z_t^\eps}^p\big)\dif t\leq C_T\,\eps^{\b/2}.
\end{align*}
This together with \eqref{eq:xy4} implies that
\begin{equation*}
\cU_2(\eps)\leq C_T\,\eps^{\beta/2}.
\end{equation*}
Combining the estimates of \(\cU_1(\eps)\) and \(\cU_2(\eps)\), we arrive at
\begin{equation*}
\big|\E\phi(X_{T}^\eps,Y_{T}^\eps)-\E\phi(\bar X_{T},\bar Y_T)\big|\leq C_T\,\eps^{\beta/2}.
\end{equation*}
The proof is finished.
\end{proof}

\subsection{Computation of the averaged function \(\bar{\cH}_u\)}

We now identify the averaged function \(\bar{\cH}_u\) arising from (\ref{eq:xy2}).

Recall that
\begin{align*}
&\Psi_u(t,x,y,v)=\p_{x_i}\tilde u(t,x,y)\Gamma_i(x,v)+\p_{y_i}\tilde u(t,x,y)\Phi_i(x,v), \\
&\Gamma_i(x,v)=(\gamma^{-1})_{i\ell}(x)v_\ell,\qquad \p_{v_k}\Phi_i(x,v)=-2\big(P_i(x)\big)_{k\ell}v_\ell,
\end{align*}
where, for simplicity, repeated indices are summed from \(1\) to \(d\).
Below, we give the proof of the claim (\ref{eq:defhubar}).

\begin{proof}[{\bf Proof of formula \eqref{eq:defhubar}}]
By \eqref{eq:defhu}, a direct expansion of \(\cH_u\) gives
\begin{align}\label{eq:defhu1}
\cH_u(t,x,y,v)&=\p_{x_ix_j}^2\tilde u(t,x,y)\Gamma_i(x,v)v_j+\p_{x_i}\tilde u(t,x,y)\p_{x_j}\Gamma_i(x,v)v_j\no\\
&\quad+\p_{x_jy_i}^2\tilde u(t,x,y)\Phi_i(x,v)v_j+\p_{y_i}\tilde u(t,x,y)\p_{x_j}\Phi_i(x,v)v_j\no\\
&\quad+\p_{x_i}\tilde u(t,x,y)(\g^{-1})_{ik}(x)F_k(x)+
\p_{y_i}\tilde u(t,x,y)\p_{v_k}\Phi_i(x,v)F_k(x)\no\\
&\quad+\p_{x_iy_r}^2\tilde u(t,x,y)\Gamma_i(x,v)h_r(x,v)+\p_{y_iy_r}^2\tilde u(t,x,y)\Phi_i(x,v)h_r(x,v)\no\\
&\quad+\p_{y_iy_r}^2\tilde u(t,x,y)\p_{v_k}\Phi_i(x,v)\cD_{rk}(x,y,v)\no\\
&\quad+\p_{x_iy_r}^2\tilde u(t,x,y)(\g^{-1})_{ik}(x)\cD_{rk}(x,y,v)\no\\
&=:\sum_{\ell=1}^{10}\cH_u^\ell(t,x,y,v),
\end{align}
where
\[
\cD_{rk}(x,y,v)=\p_{x_q}a_{r\ell}(x)\big(y_q+\Gamma_q(x,v)\big)\sigma_{k\ell}(x).
\]
We compute the average of the terms in \eqref{eq:defhu1} in four
steps.

\medskip
\noindent
\emph{Step 1: Terms with zero average.}
Since \(\zeta^x=\mathcal N(0,M(x))\) is a centered Gaussian measure,
odd functions of \(v\) have zero average. In particular,
\begin{align*}
&\int_{\R^d}\Phi_i(x,v)v_j\,\zeta^x(\dif v)=0,\qquad\quad\,\,\,\,
\int_{\R^d}\p_{x_j}\Phi_i(x,v)v_j\,\zeta^x(\dif v)=0,\\
&\int_{\R^d}\p_{v_k}\Phi_i(x,v)F_k(x)\,\zeta^x(\dif v)=0,\quad
\int_{\R^d}\Gamma_i(x,v)h_r(x,v)\,\zeta^x(\dif v)=0.
\end{align*}
As a result, we have
\begin{equation}\label{eq:defhu2}
\int_{\R^d} \bigl(\cH_u^3+\cH_u^4+\cH_u^6+\cH_u^7\bigr)(t,x,y,v)\,\zeta^x(\dif v)=0.
\end{equation}

\medskip
\noindent
\emph{Step 2: Terms involving derivatives in \(x\).}
Since \(\int_{\R^d}v_jv_\ell\,\zeta^x(\dif v)=M_{j\ell}(x)\), we have
\begin{equation*}
\int_{\R^d}\Gamma_i(x,v)v_j\,\zeta^x(\dif v)=(\gamma^{-1})_{i\ell}(x)\int_{\R^d}v_\ell v_j\,\zeta^x(\dif v)=(\gamma^{-1})_{i\ell}(x)M_{\ell j}(x),
\end{equation*}
which in turn implies that
\begin{align*}
\int_{\R^d}\cH_u^1(t,x,y,v)\zeta^x(\dif v)&=\p_{x_ix_j}^2\tilde u(t,x,y)(\g^{-1})_{i\ell}(x)M_{j\ell}(x)\\
&=\nabla_{x}^2\tilde u(t,x,y):\big(\g^{-1}(x)M(x)\big).
\end{align*}
By the symmetry of \(\nabla_x^2 \tilde{u}\), we obtain
\begin{equation*}
\nabla_{x}^2\tilde u:\big(\g^{-1}M\big)=
\frac12\nabla_{x}^2\tilde u:\big[\g^{-1}M+M\g^{-*}\big]=\frac12\nabla_{x}^2\tilde u:\big(aa^*\big),
\end{equation*}
where the last equality follows from the Lyapunov identity (\ref{eq:lyapM}). Indeed, multiplying the identity from the left by \(\g^{-1}(x)\) and from the right by
\(\g^{-*}(x)\), we have
\[
\g^{-1}(x)M(x)+M(x)\gamma^{-*}(x)=a(x)a^*(x).
\]
Thus, it holds that
\begin{equation}\label{eq:defhu3}
\int_{\R^d}\cH_u^1(t,x,y,v)\,\zeta^x(\dif v)=\frac12\nabla_{x}^2\tilde u(t,x,y):\big(a(x)a^*(x)\big).
\end{equation}
Next, differentiating \(\Gamma_i(x,v)\) with respect to \(x\), we obtain
\begin{align*}
\int_{\R^d}\p_{x_j}\Gamma_i(x,v)v_j\,\zeta^x(\dif v)
&=\p_{x_j}(\gamma^{-1})_{i\ell}(x)\int_{\R^d}v_\ell v_j\,\zeta^x(\dif v)\\
&=\p_{x_j}(\gamma^{-1})_{i\ell}(x)M_{\ell j}(x).
\end{align*}
This together with the definition of \(B\) implies that
\begin{align}\label{eq:defhu4}
&\int_{\R^d}\bigl(\cH_u^2+\cH_u^5\bigr)(t,x,y,v)\,\zeta^x(\dif v)\no\\
&=\p_{x_i}\tilde u(t,x,y)\left[\p_{x_j}(\gamma^{-1})_{i\ell}(x)M_{\ell j}(x)
    +(\gamma^{-1})_{ik}(x)F_k(x)\right]\no\\
&=\big\langle \nabla_x\tilde u(t,x,y),B(x)\big\rangle.
\end{align}

\medskip
\noindent
\emph{Step 3: Terms involving mixed derivatives.}
By the centering property of \(\Gamma\), it holds that
\begin{align*}
\int_{\R^d}\cD_{rk}(x,y,v)\,\zeta^x(\dif v)&=\p_{x_q}a_{r\ell}(x)\sigma_{k\ell}(x)\int_{\R^d}\big(y_q+\Gamma_q(x,v)\big)\,\zeta^x(\dif v)\\
&=\p_{x_q}a_{r\ell}(x)y_q\sigma_{k\ell}(x).
\end{align*}
As a result, we have
\begin{align}\label{eq:defhu5}
\int_{\R^d}\cH_u^{10}(t,x,y,v)\,\zeta^x(\dif v)&=\p_{x_iy_r}^2\tilde u(t,x,y)(\g^{-1})_{ik}(x)\p_{x_q}a_{r\ell}(x)y_q\sigma_{k\ell}(x)\no\\
&=\nabla_{xy}^2\tilde u:\big[a(x)\big(\nabla a(x)[y]\big)^*\big].
\end{align}

\medskip
\noindent
\emph{Step 4: Terms involving second derivatives in \(y\).}
Set
\[
\cA(x,y,v):=\nabla a(x)[y+\Gamma(x,v)],\qquad
\sU(x,v):=\nabla_v\Phi(x,v)\sigma(x).
\]
Since \(\cD(x,y,v)=\cA(x,y,v)\sigma^*(x)\), the last two terms involving \(\nabla_y^2\tilde u\) can be written as
\begin{align}\label{eq:uy0}
&\cH_u^8(t,x,y,v)+\cH_u^9(t,x,y,v)\no\\
&=\nabla_y^2\tilde u(t,x,y):\big[\Phi(x,v)h^*(x,v)+\sU(x,v)\cA^*(x,y,v)\big].
\end{align}
Using the symmetry of \(\nabla_y^2\tilde u\), we obtain
\begin{equation}\label{eq:uy1}
\begin{aligned}
&\nabla_{y}^2\tilde u:\big[\Phi h^*\big]
=\frac12\nabla_{y}^2\tilde u:\big[\Phi h^*+h\Phi^*\big],\\
&\nabla_{y}^2\tilde u:\big[\sU\cA^*\big]
=\frac12\nabla_{y}^2\tilde u:\big[\sU\cA^*+\cA\sU^*\big].
\end{aligned}
\end{equation}
We next use the Poisson equation (\ref{poissh}). For every \(1\leq i,r\leq d\), the product formula gives
\begin{align}\label{eq:uy2}
&\sL_0(x)\big(\Phi_i(x,v)\Phi_r(x,v)\big)\no\\
&=\Phi_i(x,v)\sL_0(x)\Phi_r(x,v)+\Phi_r(x,v)\sL_0(x)\Phi_i(x,v)
+\big\langle\sigma^*(x)\nabla_v\Phi_i(x,v),\sigma^*(x)\nabla_v\Phi_r(x,v)\big\rangle\no\\
&=-\Phi_i(x,v)h_r(x,v)-\Phi_r(x,v)h_i(x,v)+\big(\sU(x,v)\sU^*(x,v)\big)_{ir}.
\end{align}
Since \(\zeta^x\) is the invariant measure under the operator \(\sL_0(x)\), we have
\[
\int_{\R^d}\sL_0(x)\big(\Phi_i(x,v)\Phi_r(x,v)\big)\,\zeta^x(\dif v)=0,
\]
Integrating \eqref{eq:uy2} with respect to \(\zeta^x\), we obtain the
 identity
\begin{align}\label{eq:uy3}
\int_{\R^d}\left[\Phi(x,v)h^*(x,v)+h(x,v)\Phi^*(x,v)\right]\,\zeta^x(\dif v)=\int_{\R^d} \sU(x,v)\sU^*(x,v)\,\zeta^x(\dif v).
\end{align}
Combining \eqref{eq:uy0}, \eqref{eq:uy1} and \eqref{eq:uy3}, we arrive at
\begin{align*}
&\int_{\R^d}\big(\cH_u^8+\cH_u^9\big)(t,x,y,v)\,\zeta^x(\dif v)\no\\
&=\frac{1}{2}\nabla_{y}^2\tilde u(t,x,y):\int_{\R^d}\big[\sU(x,v)\sU^*(x,v)+\sU(x,v)\cA^*(x,y,v)\\
&\qquad\qquad\qquad\qquad\qquad+\cA(x,y,v)\sU^*(x,v)\big]\,\zeta^x(\dif v).
\end{align*}
Using the identity
\[
\sU\sU^*+\sU\cA^*+\cA\sU^*=(\cA+\sU)(\cA+\sU)^*-\cA\cA^*,
\]
we further have
\begin{align*}
&\int_{\R^d}\big(\cH_u^8+\cH_u^9\big)(t,x,y,v)\,\zeta^x(\dif v)\\
&=\frac{1}{2}\nabla_{y}^2\tilde u(t,x,y):\int_{\R^d}\Big[\big(\cA(x,y,v)+\sU(x,v)\big)\big(\cA(x,y,v)+\sU(x,v)\big)^*\\
&\qquad\qquad\qquad\qquad\qquad-\cA(x,y,v)\cA^*(x,y,v)\Big]\,\zeta^x(\dif v).
\end{align*}
By the
definition (\ref{defr1}) of \(\cR\), we have
\[
\cA(x,y,v)+\sU(x,v)=\nabla a(x)[y]+\cR(x)v.
\]
Since \(\zeta^x\) is centered, it follows from the definitions of
\(\cQ\) and \(\cQ_0\) that
\begin{align*}
&\int_{\R^d}(\cA(x,y,v)+\sU(x,v))(\cA(x,y,v)+\sU(x,v))^*\,\zeta^x(\dif v)\\
&=\nabla a(x)[y]\bigl(\nabla a(x)[y]\bigr)^*+\cQ(x),
\end{align*}
whereas
\begin{equation*}
\int_{\R^d} \cA(x,y,v)\cA^*(x,y,v)\,\zeta^x(\dif v)=\nabla a(x)[y]\bigl(\nabla a(x)[y]\bigr)^*+\cQ_0(x).
\end{equation*}
Therefore, we get
\begin{equation*}
\int_{\R^d}\big(\cH_u^8+\cH_u^9\big)(t,x,y,v)\,\zeta^x(\dif v)=\frac12 \nabla_{y}^2\tilde u(t,x,y):\big(\cQ(x)-\cQ_0(x)\big),
\end{equation*}
which together with \eqref{eq:defhu1}-\eqref{eq:defhu5} proves \eqref{eq:defhubar}. The proof is finished.
\end{proof}

\begin{remark}
The identity \eqref{eq:uy3} combines the martingale
term generated by the quadratic corrector with the fast component of
the common-noise linearization. It is essential to combine these two
contributions before averaging. Treating them separately would omit
the corresponding cross-variation terms and can not
yield the correct covariance matrix \(\cQ\).
\end{remark}

\section{Conditional equilibrium of the fast component}
\label{sec:fast}

We first study the frozen Ornstein--Uhlenbeck process. For \(x,y,z\in\R^d\), let \(Z_t^{x,y}(z)\) denote the solution of \eqref{eq:forz} with initial value \(z\), namely,
\begin{equation}\label{eq:forz1}
\dif Z_t^{x,y}(z)=-\g(x)\big(Z_t^{x,y}(z)-y\big)\dif t
+a(x)\dif W_t, \qquad  Z_0^{x,y}(z)=z.
\end{equation}
Its explicit representation is given by
\begin{equation}\label{eq:ouexp}
Z_t^{x,y}(z)=y+\mathrm e^{-t\gamma(x)}(z-y)+\int_0^t\mathrm e^{-(t-r)\gamma(x)}a(x)\dif W_r.
\end{equation}
We denote the corresponding Markov semigroup by
\begin{equation}\label{sem}
\cP_t^{x,y}\varphi(z):=\E\varphi(Z_t^{x,y}(z)).
\end{equation}
Given a function \(\varphi(z)\), define
\begin{equation}\label{barphi}
\bar\varphi(x,y):=\int_{\R^d}\varphi(z)\,\mu^{x,y}(\dif z),
\end{equation}
where \(\mu^{x,y}=\mathcal N\bigl(y,N(x)\bigr)\) is the invariant measure of \eqref{eq:forz1}.

\vspace{1mm}
The following lemma gives the exponential convergence of the frozen
Ornstein--Uhlenbeck semigroup as well as
the derivative estimates
needed below.

\begin{lemma}\label{lem:estou}
Assume {\bf(A1)} and {\bf(A2)} hold, and let
\(\varphi\in C_b^{\a}\) with $\a\in(0,1)$. Define
\begin{equation}\label{ww}
w(t,z;x,y):=\cP_t^{x,y}\varphi(z)-\bar\varphi(x,y).
\end{equation}
where $\bar\varphi$ is given by \eqref{barphi}.
Then there exist constants $C_\alpha,\kappa>0$  such that, for every $t>0$ and
$x,y,z\in\R^d$,
\begin{align}
&|w(t,z;x,y)|
+|\nabla_xw(t,z;x,y)|
+|\nabla_yw(t,z;x,y)|\no\\
&\qquad\quad\,\qquad+|\nabla_y^2w(t,z;x,y)|
+|\nabla_{yz}^2w(t,z;x,y)|
\leq
C_\alpha\,
\mathrm e^{-\kappa t}(1+|z-y|),
\label{eq:estou}
\\
&|\nabla_zw(t,z;x,y)|
\leq
C_\alpha
(t\wedge1)^{-(1-\alpha)/2}\,\mathrm e^{-\kappa t},
\label{eq:estou-z}
\\
&|\nabla_z^2w(t,z;x,y)|
\leq
C_\alpha(t\wedge1)^{-1+\alpha/2}\,
\mathrm e^{-\kappa t}.
\label{eq:estou-zz}
\end{align}
Moreover, we have
\(
\bar\varphi\in C_b^{2+\beta,2+\beta}\).
\end{lemma}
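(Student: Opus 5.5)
The plan is to work entirely from the explicit representation \eqref{eq:ouexp}. Write $\Sigma_t(x):=\int_0^t \mathrm e^{-r\gamma(x)}a(x)a^*(x)\mathrm e^{-r\gamma^*(x)}\dif r$, so that $Z_t^{x,y}(z)\sim\cN\bigl(y+\mathrm e^{-t\gamma(x)}(z-y),\Sigma_t(x)\bigr)$. After the change of variables $\zeta=z-y$ we get
\[
\cP_t^{x,y}\varphi(z)=\int_{\R^d}\varphi\bigl(y+m_t+\Sigma_t^{1/2}(x)\eta\bigr)\,\cN(0,\Id_d)(\dif\eta),\qquad m_t:=\mathrm e^{-t\gamma(x)}(z-y),
\]
and $\bar\varphi(x,y)=\int\varphi(y+N^{1/2}(x)\eta)\,\cN(0,\Id_d)(\dif\eta)$, since $\Sigma_\infty=N$ by the integral representation of the Lyapunov equation, as in Lemma~\ref{lem:reg}. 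By {\bf(A1)} we have $|\mathrm e^{-t\gamma(x)}|\le\mathrm e^{-\lambda_\gamma t}$, $|N-\Sigma_t|\le C\mathrm e^{-2\lambda_\gamma t}$, and $\Sigma_t\ge c(t\wedge1)\Id_d$ uniformly in $x$. Together with Lemma~\ref{lem:reg}, the $x$-derivatives of $\mathrm e^{-t\gamma(x)}$, $\Sigma_t(x)$ and $N(x)$ are bounded by $C(1+t)\mathrm e^{-\lambda_\gamma t}$ and $C\mathrm e^{-2\lambda_\gamma t}$ respectively, via the Duhamel bound used there. We fix $\kappa<\lambda_\gamma$ to absorb polynomial factors in $t$.

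To control $w$ we compare two Gaussians, $\cN(y+m_t,\Sigma_t)$ and $\cN(y,N)$, acting on $\varphi\in C_b^\a$. For $t\ge1$ both covariances are uniformly nondegenerate, so we differentiate the Gaussian densities in the mean and covariance parameters. We interpolate linearly between $(m_t,\Sigma_t)$ and $(0,N)$ and use $|\partial_m p|,|\partial_\Sigma p|$ integrable. This gives $|w|\le C(|m_t|+|N-\Sigma_t|)\le C\mathrm e^{-\kappa t}(1+|z-y|)$. For $t<1$ the bound $|w|\le2\|\varphi\|_\infty$ suffices.

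For the $y$-derivatives the key observation is that $\varphi$ enters only through the shift. With $\rho_{m,\Sigma}$ the density of $\cN(m,\Sigma)$, we have $\cP_t^{x,y}\varphi(z)=\int\varphi(y+u)\rho_{m_t,\Sigma_t}(u)\dif u=\int\varphi(u')\rho_{m_t,\Sigma_t}(u'-y)\dif u'$. Hence every $y$- or $z$-derivative falls on the Gaussian density. Each derivative in $y$ (with $m_t$ depending on $y$ through $-\mathrm e^{-t\gamma}y$) or in $z$ (through $\mathrm e^{-t\gamma}$) costs one factor of $\Sigma_t^{-1/2}$. We then use the $C^\a$ regularity of $\varphi$ by subtracting $\varphi(y+m_t)$, since a derivative of a density integrates to zero. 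This yields
\[
|\nabla_z\cP_t\varphi|\le C|\mathrm e^{-t\gamma}|\,\|\varphi\|_{C^\a}(t\wedge1)^{-(1-\a)/2}
\]
and the analogous $(t\wedge1)^{-1+\a/2}$ bound for $\nabla_z^2$. Since $\bar\varphi$ is $z$-independent, \eqref{eq:estou-z}--\eqref{eq:estou-zz} follow.

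For $\nabla_yw$, $\nabla_y^2w$ and $\nabla_{yz}^2w$, the point is that the $y$-derivative of $\cP_t\varphi$ minus that of $\bar\varphi$ is again a difference of Gaussian integrals of derivative type, $\int\varphi(u')[\nabla\rho_{m_t,\Sigma_t}-\nabla\rho_{0,N}](u'-y)\dif u'$ (up to the factor $\Id-\mathrm e^{-t\gamma}$ in the chain rule). For $t\ge1$ this is bounded by the same interpolation argument, now applied to higher derivatives of the density. The mixed $\nabla_{yz}^2$ carries an extra factor $\mathrm e^{-t\gamma}$. Here lies the main obstacle: for $t<1$ the $y$-derivatives of $\cP_t\varphi$ separately blow up like $(t)^{-1+\a/2}$, so a bound of the form $C\mathrm e^{-\kappa t}(1+|z-y|)$ needs care. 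Note that \eqref{eq:estou} as stated has no singular factor. So I would first try to exploit the structure $\nabla_y=-(\Id-\mathrm e^{-t\gamma})^{*}$-weighted shifts: $\partial_y$ of $\cP_t\varphi$ at fixed $z$ equals $(\Id-\mathrm e^{-t\gamma})^*$ applied to $\nabla$ of the density, and $|\Id-\mathrm e^{-t\gamma}|\le Ct$ compensates $t^{-1/2}$ and even $t^{-1}$ singularities up to $t^{\a/2}$ factors. This works because $\nabla_y^2$ costs $t^2\cdot t^{-1+\a/2}$, which is bounded. The $x$-derivative is handled the same way. $\nabla_x$ hits $m_t$, $\Sigma_t$ and $N$; the $\Sigma_t$-derivative is $O(t)$ for small $t$, which cancels the $t^{-1}$ from a covariance derivative of the density, and for large $t$ we get exponential decay from the difference $\nabla_x\Sigma_t-\nabla_xN$ together with $\nabla_xm_t$.

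Finally, $\bar\varphi(x,y)=\int\varphi(u)\rho_{0,N(x)}(u-y)\dif u$ with $N\in C_b^{2+\beta}$ uniformly positive definite by Lemma~\ref{lem:reg}. Smoothness in $y$ of any order comes from the Gaussian kernel, and in $x$ through $N(x)$ with the chain rule. Hence $\bar\varphi\in C_b^{2+\beta,2+\beta}$, the H\"older seminorm of $\nabla_x^2\bar\varphi$ being inherited from $\nabla^2N\in C_b^\beta$.
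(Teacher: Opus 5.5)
Your proposal is correct and follows essentially the same route as the paper. It uses the explicit Gaussian representation of $\cP_t^{x,y}\varphi$ and $\bar\varphi$ and a split into $t\le1$ and $t\ge1$. For $t\le1$, the H\"older subtraction trick gives the $(t\wedge1)^{(\a-k)/2}$ singularities, and the $y$- and $x$-derivative singularities are compensated by $|\Id-\mathrm e^{-t\gamma}|\le Ct$ and $|\nabla_x\Sigma_t|\le Ct$. For $t\ge1$, an $L^1$ comparison of the Gaussian densities controlled by $|\mathrm e^{-t\gamma}(z-y)|+|\Sigma_t-N|$ gives the exponential decay, and $\bar\varphi\in C_b^{2+\beta,2+\beta}$ follows, as in the paper, from $N\in C_b^{2+\beta}$ being uniformly positive definite.
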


\begin{proof}
For simplicity, set
\[
E_t(x):=\mathrm e^{-t\gamma(x)},
\qquad
N_t(x):=\int_0^tE_r(x)a(x)a^*(x)E_r^*(x)\,\dif r,
\qquad
m_t:=y+E_t(x)(z-y).
\]
For a symmetric positive definite matrix $A$, let $p_A$
denote the density of $\cN(0,A)$. Then, by \eqref{eq:ouexp} we have
\begin{equation}\label{eq:ou-gaussian-representation}
\cP_t^{x,y}\varphi(z)
=
\int_{\R^d}\varphi(\eta)p_{N_t(x)}(\eta-m_t)\,\dif\eta,
\qquad
\bar\varphi(x,y)
=
\int_{\R^d}\varphi(\eta)p_{N(x)}(\eta-y)\,\dif\eta.
\end{equation}
According to Lemma~\ref{lem:reg}, $N\in C_b^{2+\beta}$ is uniformly
positive definite and bounded. Since the spatial and parameter
derivatives of $p_A$ have uniformly bounded $L^1$-norms, we obtain
\begin{equation}\label{eq:ou-barphi-regularity}
\bar\varphi\in C_b^{2+\beta,2+\beta}\quad\text{with}\quad
\|\bar\varphi\|_{C_b^{2+\beta,2+\beta}}
\leq C_0\|\varphi\|_\infty.
\end{equation}
Below, we prove the derivative estimates, and we treat the small
and long time estimates separately.

\medskip
\noindent
\emph{Step 1: Estimates for $0<t\leq1$.}
By the assumptions on the coefficients and Duhamel's formula, we have
\begin{equation*}
C_1^{-1}t\mI_d\leq N_t(x)\leq C_1t\mI_d,
\qquad
|\mI_d-E_t(x)|+|\nabla_xE_t(x)|+|\nabla_xN_t(x)|
\leq C_1t.
\end{equation*}
For $k=1,2$, by standard computation we deduce that
\begin{align*}
\left|
\int_{\R^d}\varphi(\eta)
\nabla_\eta^kp_{N_t(x)}(\eta-m)\,\dif\eta
\right|
&=
\left|
\int_{\R^d}
[\varphi(m+\eta)-\varphi(m)]
\nabla_\eta^kp_{N_t(x)}(\eta)\,\dif\eta
\right|\\
&\leq
C_2
\int_{\R^d}|\eta|^\alpha
|\nabla_\eta^kp_{N_t(x)}(\eta)|\,\dif\eta\leq C_2 t^{(\alpha-k)/2}.
\end{align*}
Since $\nabla_zm_t=E_t(x)$ and
$\nabla_ym_t=\mI_d-E_t(x)$, it follows that
\begin{align*}
|\nabla_z\cP_t^{x,y}\varphi(z)|
&\leq C_3t^{-(1-\alpha)/2},
&
|\nabla_z^2\cP_t^{x,y}\varphi(z)|
&\leq C_3t^{-1+\alpha/2},\\
|\nabla_y\cP_t^{x,y}\varphi(z)|
&\leq C_3t^{(1+\alpha)/2},
&
|\nabla_y^2\cP_t^{x,y}\varphi(z)|
&\leq C_3t^{1+\alpha/2},
\end{align*}
and
\[
|\nabla_{yz}^2\cP_t^{x,y}\varphi(z)|
\leq C_3t^{\alpha/2}.
\]
For the $x$-derivative,  differentiating the mean and covariance in
\eqref{eq:ou-gaussian-representation}, we obtain
\begin{align*}
|\nabla_x\cP_t^{x,y}\varphi(z)|
&\leq C_4
\left(
|\nabla_xE_t(x)|\,|z-y|\,t^{(\alpha-1)/2}
+
|\nabla_xN_t(x)|\,t^{-1+\alpha/2}
\right)\\
&\leq C_4
\left(t^{(1+\alpha)/2}|z-y|+t^{\alpha/2}\right).
\end{align*}
Together with \eqref{eq:ou-barphi-regularity} and
$|w|\leq2\|\varphi\|_\infty$, we obtain the
required bounds for $0<t\leq1$.

\medskip
\noindent
\emph{Step 2: Estimates for $t\geq1$.}
By the coercivity of $\gamma$ and Duhamel's formula, we have
\[
|E_t(x)|\leq\mathrm e^{-\lambda_\gamma t},
\qquad
|\nabla_xE_t(x)|\leq C_5t\mathrm e^{-\lambda_\gamma t}.
\]
Note that
\[
N(x)-N_t(x)
=
\int_t^{+\infty} E_r(x)a(x)a^*(x)E_r^*(x)\,\dif r.
\]
We deduce that  for some $\kappa>0$,
\begin{align*}
|E_t(x)|+|\nabla_xE_t(x)|
+|N_t(x)-N(x)|
+|\nabla_xN_t(x)-\nabla_xN(x)|
\leq C_5\mathrm e^{-\kappa t}.
\end{align*}
The matrices $N_t(x)$, $t\geq1$, and $N(x)$ are uniformly
positive definite and bounded. Hence, for $k=0,1,2$, by
the $L^1$-bounds for Gaussian derivatives we deduce that
\begin{align*}
&\big\|
\nabla^kp_{N_t(x)}(\cdot-m_t)
-\nabla^kp_{N(x)}(\cdot-y)
\big\|_{L^1}\\
&\qquad\leq
C_6\big(|N_t(x)-N(x)|+|E_t(x)(z-y)|\big)
\leq C_6\mathrm e^{-\kappa t}(1+|z-y|).
\end{align*}
Differentiating \eqref{eq:ou-gaussian-representation}
and using $\nabla_ym_t=\mI_d-E_t(x)$, we further obtain
\[
|w|+|\nabla_yw|+|\nabla_y^2w|
\leq C_7
\mathrm e^{-\kappa t}(1+|z-y|).
\]
Similarly, differentiation in $x$-variable we have
\begin{align*}
|\nabla_xw|
&\leq C_8
\Big(
|\nabla_xE_t(x)|\,|z-y|
+|\nabla_xN_t(x)-\nabla_xN(x)|\\
&\hspace{38mm}
+|N_t(x)-N(x)|+|E_t(x)(z-y)|
\Big)\\
&\leq C_8
\mathrm e^{-\kappa t}(1+|z-y|).
\end{align*}
Finally, each derivative in $z$ contributes a factor $E_t(x)$,
whereas $\bar\varphi$ is independent of $z$. Thus, we have
\[
|\nabla_zw|+|\nabla_z^2w|+|\nabla_{yz}^2w|
\leq C_9\mathrm e^{-\kappa t}.
\]
Combining the two steps we obtain the desired estimates.
The proof is finished.
\end{proof}

We now return to \(Z_t^\eps\).
Recall that by the   decomposition in Lemma~\ref{lem:decz}, we have
\[
V_t^\eps=\gamma(X_t^\eps)\left(Z_t^\eps-Y_t^\eps+\sqrt{\eps}\gamma^{-1}(x)v\right).
\]
Substituting this identity into \eqref{eq:zeps}, we obtain
\begin{align}\label{eq:zfast}
\dif Z_t^\eps&=-\frac1\eps\gamma(X_t^\eps)\big(Z_t^\eps-Y_t^\eps\big)\dif t
+\frac1{\sqrt{\eps}}a(\bar X_t)\dif W_t\no\\
&\quad+\frac1{\sqrt{\eps}}\left[B(\bar X_t)-\gamma(X_t^\eps)\gamma^{-1}(x)v\right]\dif t.
\end{align}
Thus, on the fast time scale, \(Z_t^\varepsilon\) is close to the
frozen Ornstein--Uhlenbeck process (\ref{eq:forz1}) with parameters
\((X_t^\varepsilon,Y_t^\varepsilon)\).
The following proposition quantifies this conditional equilibration.

\begin{proposition}\label{prop:zcon}
Assume {\bf(A1)} and {\bf(A2)} hold. Then, for every \(t\in(0,T]\) and \(\varphi\in C_b^{\a}\) with $\alpha\in(0,1)$, we have
\begin{equation*}
\left|\E\varphi(Z_t^\eps)-\E\bar\varphi(X_t^\eps,Y_t^\eps)\right|
\leq C_{T,\varphi}\,\sqrt\eps+C_\varphi\,\mathrm e^{-\kappa t/\eps},
\end{equation*}
where $\bar\varphi$ is defined by \eqref{barphi}, \(C_{T,\varphi}\) is a constant independent of \(\eps\), and \(C_\varphi,\kappa>0\) are independent of \(\eps\) and \(T\).
\end{proposition}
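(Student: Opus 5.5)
We would prove Proposition~\ref{prop:zcon} by a backward-semigroup (Duhamel) interpolation along the fast OU flow, using the function $w$ of Lemma~\ref{lem:estou} as a time-dependent test function. Fix $t\in(0,T]$ and set, for $s\in[0,t]$,
\[
\Lambda(s,z,x,y):=\cP^{x,y}_{(t-s)/\eps}\varphi(z)=\bar\varphi(x,y)+w\big((t-s)/\eps,z;x,y\big).
\]
Then $\Lambda(t,Z_t^\eps,X_t^\eps,Y_t^\eps)=\varphi(Z_t^\eps)$ and $\Lambda(0,0,x,0)=\bar\varphi(x,0)+w(t/\eps,0;x,0)$. By \eqref{eq:estou} the last term is bounded by $C_\varphi\mathrm e^{-\kappa t/\eps}$; this produces the initial-layer term. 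It therefore remains to control $\E\Lambda(t,\cdot)-\E\Lambda(0,\cdot)$ by comparing it with $\E\bar\varphi(X_t^\eps,Y_t^\eps)-\bar\varphi(x,0)$.

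We apply It\^o's formula to $\Lambda(s,Z_s^\eps,X_s^\eps,Y_s^\eps)$, using \eqref{eq:zfast} for $Z^\eps$, \eqref{eq:xv} for $X^\eps$ and \eqref{eq:defY} for $Y^\eps$. The time derivative $-\eps^{-1}\partial_\tau w$ equals $-\eps^{-1}\mathcal L^{x,y}_{OU}\cP\varphi$, where $\mathcal L^{x,y}_{OU}$ is the generator of \eqref{eq:forz1}. Evaluated at $x=X_s^\eps$, it cancels exactly the $\eps^{-1}$ part of the $Z$-dynamics: the drift $-\eps^{-1}\gamma(X^\eps)(Z-Y)$ and the $\eps^{-1}\frac12 aa^*(X^\eps_s):\nabla_z^2$ term.

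The remaining terms, which should be $O(\sqrt\eps)$ after integration, are:
\begin{enumerate}[(a)]
\item the diffusion mismatch $\frac1{2\eps}\big(a(\bar X)a^*(\bar X)-a(X^\eps)a^*(X^\eps)\big):\nabla_z^2w$, which is of size $\eps^{-1/2}|Z^\eps|\,|\nabla_z^2w|$;
\item the drift $\eps^{-1/2}[B(\bar X)-\gamma(X^\eps)\gamma^{-1}(x)v]\cdot\nabla_zw$;
\item the cross-variations of $Z$ with $X$ and with $Y$ (the $Y$-noise is $\eps^{-1/2}(a(\bar X)-a(X^\eps))\,\dif W=O(|Z|)\,\dif W$, and the $X$-part enters through $V$);
\item the $x$-derivative term $\eps^{-1/2}\langle\nabla_x\Lambda,V\rangle$ and the $y$-drift term $\eps^{-1/2}\langle\nabla_y\Lambda,\cdot\rangle$, together with the corresponding second-order terms.
\end{enumerate}

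The key integrability fact is
\[
\int_0^t(\text{bound at }\tau=(t-s)/\eps)\,\dif s=\eps\int_0^{t/\eps}(\cdots)\,\dif\tau .
\]
Using \eqref{eq:estou-z} and \eqref{eq:estou-zz}, both $(\tau\wedge1)^{-(1-\alpha)/2}\mathrm e^{-\kappa\tau}$ and $(\tau\wedge1)^{-1+\alpha/2}\mathrm e^{-\kappa\tau}$ are integrable on $(0,\infty)$, and the bounds \eqref{eq:estou} decay exponentially. Consequently, term (a) contributes $\eps^{-1/2}\cdot\eps=\sqrt\eps$, term (b) contributes $\eps^{-1/2}\cdot\eps=\sqrt\eps$, and so on, with moments controlled by Lemma~\ref{lem:monest}. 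Martingale terms have zero mean.

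The difficulty lies in splitting the $x$- and $y$-derivatives of $\Lambda$ into the $\bar\varphi$-part and the $w$-part. For the $w$-part, all derivative bounds carry the factor $\mathrm e^{-\kappa(t-s)/\eps}$, so an $\eps^{-1/2}$ prefactor still yields $\sqrt\eps$ after the time integration. For the $\bar\varphi$-part, no decay is available. Instead we regroup the $\bar\varphi$-terms: they are exactly the It\^o expansion of $\bar\varphi(X_s^\eps,Y_s^\eps)$. Since $\bar\varphi$ depends on neither $z$ nor $s$, these terms sum to $\E\bar\varphi(X_t^\eps,Y_t^\eps)-\bar\varphi(x,0)$, which is precisely the quantity being compared against. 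Thus
\[
\E\varphi(Z_t^\eps)-\E\bar\varphi(X_t^\eps,Y_t^\eps)=\E w(t/\eps,0;x,0)+(\text{$w$-terms}),
\]
and it suffices to bound the $w$-terms. Because $\bar\varphi\in C_b^{2+\beta,2+\beta}$, the It\^o formula for $\bar\varphi(X^\eps,Y^\eps)$ is legitimate even though $Y^\eps$ has singular $\eps^{-1/2}h$ drift.

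The main obstacle is the $w$-part of the singular terms. In $\eps^{-1/2}\langle\nabla_xw,V\rangle$ and $\eps^{-1/2}\langle\nabla_yw,h\rangle$ the prefactor is only $\eps^{-1/2}$, so the exponential factor gives $\eps^{-1/2}\cdot\eps=\sqrt\eps$ provided the polynomial growth $(1+|Z-Y|)(1+|V|^2)$ is controlled by moments. We also need the mixed second-order terms $\nabla^2_{yz}w$ and $\nabla_y^2w$, whose covariations are $O(\eps^{-1/2})$ and $O(1)$; these are handled by \eqref{eq:estou} in the same way. The cross-variation of $X$ with the others is $O(\sqrt\eps)$ since $X^\eps$ has no martingale part, so it is harmless. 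If the $\nabla_xw$ term causes trouble because only a first-order $x$-derivative is estimated, we would instead freeze $x$ at $X^\eps_{s}$ and use the position equation $\dif X=\eps^{-1/2}V\dif s$, which requires only $\nabla_xw$, as provided by \eqref{eq:estou}. Summing all contributions gives $C_{T,\varphi}\sqrt\eps+C_\varphi\mathrm e^{-\kappa t/\eps}$.
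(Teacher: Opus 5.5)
Your proposal is correct and is essentially the paper's proof. By linearity, subtracting the It\^o expansion of $\bar\varphi(X^\eps_s,Y^\eps_s)$ from that of $\Lambda=\bar\varphi+w$ is the same as applying It\^o's formula directly to $w\big((t-s)/\eps,Z_s^\eps;X_s^\eps,Y_s^\eps\big)$, which is what the paper does; the $\eps^{-1}$ Ornstein--Uhlenbeck terms then cancel, $w(t/\eps,0;x,0)$ gives the initial layer, and the six remaining terms are bounded by Lemma~\ref{lem:estou} via $\int_0^t g((t-s)/\eps)\,\dif s=\eps\int_0^{t/\eps}g(\tau)\,\dif\tau$ (minor point: $X^\eps$ has no martingale part, so its cross-variations vanish identically rather than being $O(\sqrt\eps)$, and your fallback for the $\nabla_x w$ term is unnecessary).
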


\begin{proof}
Recall that by (\ref{ww}), we have
\begin{equation*}
w(s,z;x,y)=\cP_s^{x,y}\varphi(z)-\bar\varphi(x,y)
=\E\varphi(Z^{x,y}_s(z))-\bar\varphi(x,y).
\end{equation*}
Then \(w\) satisfies the following Cauchy problem:
\begin{equation}\label{eq:cau1}
\left\{\begin{aligned}
&\p_s w(s,z;x,y)-\sL_1(x,y)w(s,z;x,y)=0,\\
&w(0,z;x,y)=\varphi(z)-\bar\varphi(x,y),
\end{aligned} \right.
\end{equation}
where the operator $\sL_1(x,y)$ is defined by
\begin{align*}
\mathscr L_1(x,y)
:=
-\langle\gamma(x)(z-y),\nabla_z\rangle
+\frac12a(x)a^*(x):\nabla_z^2.
\end{align*}
For $s\leq T$, we define
\[
\tilde w(s,z;x,y):=w(T-s,z;x,y).
\]
It then follows that
\begin{align*}
&\tilde w(T,z;x,y)=w(0,z;x,y)=\varphi(z)-\bar\varphi(x,y),\\
&\tilde w(0,z;x,y)=w(T,z;x,y)=\E\varphi(Z^{x,y}_T(z))-\bar\varphi(x,y).
\end{align*}
Since $Z_0^\varepsilon=Y_0^\varepsilon=0$,
$X_0^\varepsilon=x$, and taking \(T=t/\eps\), we have
\begin{align}\label{eq:v1}
\cV(\eps)&:=\left|\E\varphi(Z_t^\eps)-\E\bar\varphi(X_t^\eps,Y_t^\eps)\right|\no\\
&\leq\left|\E\tilde w\left(\frac{t}{\eps},Z_t^\eps;X_t^\eps,Y_t^\eps\right)-\tilde w(0,0;x,0)\right|+\left|\tilde w(0,0;x,0)\right|\no\\
&=:\cV_1(\eps)+\cV_2(\eps).
\end{align}
For the second term, by the estimate \eqref{eq:estou}  we obtain
\begin{equation}\label{eq:v2}
\cV_2(\eps)=\left|w\left(\frac{t}{\eps},0;x,0\right)\right|\leq C_\varphi\,\mathrm e^{-\kappa t/\eps}.
\end{equation}
We next estimate \(\cV_1(\varepsilon)\).
Using It\^{o}'s formula, by (\ref{eq:zfast}) and \eqref{eq:cau1}, we deduce that
\begin{align}\label{eq:v3}
&\E\tilde w\Big(\frac{t}{\eps},Z_t^\eps;X_t^\eps,Y_t^\eps\Big)-\tilde w(0,0;x,0)=\frac{1}{\sqrt{\eps}}\E\left(\int_0^t\Big\langle \nabla_x\tilde w\Big(\frac{s}{\eps},Z_s^\eps;X_s^\eps,Y_s^\eps\Big),V_s^\eps\Big\rangle\dif s\right)\no\\
&\quad+\frac{1}{\sqrt{\eps}}\E\left(\int_0^t\Big\langle \nabla_z\tilde w\Big(\frac{s}{\eps},Z_s^\eps;X_s^\eps,Y_s^\eps\Big),B(\bar X_s)-\g(X^\eps_s)\g^{-1}(x)v\Big\rangle\dif s\right)\no\\
&\quad+\frac{1}{\sqrt{\eps}}\E\left(\int_0^t\Big\langle \nabla_y\tilde w\Big(\frac{s}{\eps},Z_s^\eps;X_s^\eps,Y_s^\eps\Big),B(\bar X_s)-B(X^\eps_s)+h(X^\eps_s,V^\eps_s)\Big\rangle\dif s\right)\no\\
&\quad+\frac{1}{2\eps}\E\left(\int_0^t\nabla^2_{z}\tilde w\Big(\frac{s}{\eps},Z_s^\eps;X_s^\eps,Y_s^\eps\Big):\big[a(\bar X_s)a^*(\bar X_s)-a(X_s^\eps)a^*(X_s^\eps)\big]\dif s\right)\no\\
&\quad+\frac{1}{2\eps}\E\left(\int_0^t\nabla^2_{y}\tilde w\Big(\frac{s}{\eps},Z_s^\eps;X_s^\eps,Y_s^\eps\Big):\left[\big(a(\bar X_s)-a(X^\eps_s)\big)\big(a(\bar X_s)-a(X^\eps_s)\big)^*\right]\dif s\right)\no\\
&\quad+\frac{1}{\eps}\E\left(\int_0^t\nabla^2_{yz}\tilde w\Big(\frac{s}{\eps},Z_s^\eps;X_s^\eps,Y_s^\eps\Big):\left[\big(a(\bar X_s)-a(X^\eps_s)\big)a^*(\bar X_s)\right]\dif s\right)=:\sum_{i=1}^6\sI_{i}(\eps).
\end{align}
Using the derivative estimates \eqref{eq:estou} and the moment estimates \eqref{eq:estv}, \eqref{eq:estz} and \eqref{eq:esty}, we have
\begin{align*}
|\sI_1(\eps)|+|\sI_3(\eps)|&\leq\frac{C_1}{\sqrt{\eps}}\int_0^t \mathrm e^{-\kappa(t-s)/\eps}\E\big[1+|V_s^\eps|^p
+|Y_s^\eps|^p+|Z_s^\eps|^p\big]\dif s\\
&\leq\frac{C_T}{\sqrt{\eps}}\int_0^t\mathrm e^{-\kappa(t-s)/\eps}\dif s
\leq C_T\sqrt{\eps}.
\end{align*}
Similarly, by \eqref{eq:estou-z} we have
\begin{align*}
|\sI_2(\varepsilon)|
\leq
\frac{C_T}
{\sqrt{\varepsilon}}
\int_0^t \left(\frac{(t-s)}{\eps}\wedge1\right)^{-(1-\a)/2}\mathrm e^{-\kappa(t-s)/\eps}\dif s
\leq
C_{T}
\sqrt{\varepsilon}.
\end{align*}
For the term involving \(\nabla_{z}^2\tilde w\), by the boundedness of $a$ and $\nabla a$, it holds
\[
\big|a(\bar X_{s})a^*(\bar X_{s})-a(X^\eps_{s})a^*(X^\eps_s)\big|\leq C_2\big|a(\bar X_{s})-a(X^\eps_{s})\big|\leq C_2\sqrt{\eps}|Z_s^\eps|.
\]
This together with \eqref{eq:estz}, \eqref{eq:esty} and \eqref{eq:estou-zz} yields that
\begin{align*}
\sI_4(\eps)&\leq\frac{C_3}{\sqrt{\eps}}\int_0^t \left(\frac{(t-s)}{\eps}\wedge1\right)^{-1+\a/2}
\mathrm e^{-\kappa(t-s)/\eps}\E\big[1
+|Y_s^\eps|^p+|Z_s^\eps|^p\big]\dif s\\
&\leq\frac{C_T}{\sqrt{\eps}}\int_0^t \left(\frac{(t-s)}{\eps}\wedge1\right)^{-1+\a/2}\mathrm e^{-\kappa(t-s)/\eps}\dif s
\leq C_T\sqrt{\eps}.
\end{align*}
Similarly, we have
\[
\norm{a(\bar X_s)-a(X_s^\eps)}\leq C_4\sqrt{\eps}\norm{Z_s^\eps},
\]
and by \eqref{eq:estou} we deduce that
\begin{align*}
\sI_5(\eps)\leq C_T\int_0^t \mathrm e^{-\kappa(t-s)/\eps}\dif s
\leq C_T\,\eps.
\end{align*}
Finally, for the last term and using the similar arguments as above, we get
\begin{align*}
\sI_6(\eps)\leq\frac{C_T}{\sqrt{\eps}}\int_0^t\mathrm e^{-\kappa(t-s)/\eps}\dif s
\leq C_T\sqrt{\eps}.
\end{align*}
Combining these estimates with \eqref{eq:v3}, we obtain
\[
\cV_1(\eps)=\Big|\E\tilde w\Big(\frac{t}{\eps},Z_t^\eps;X_t^\eps,Y_t^\eps\Big)-\tilde w(0,0;x,0)\Big|\leq C_T\sqrt{\eps},
\]
which together with \eqref{eq:v1} and \eqref{eq:v2} gives
\[
\cV(\eps)\leq C_T\sqrt{\eps}+C_0\,\mathrm e^{-\kappa t/\eps}.
\]
The proof is finished.
\end{proof}

\section{Proof of the main theorem}
\label{sec:proof-main}

We now combine the joint diffusion limit for the slow pair
\((X^\eps_t,Y^\eps_t)\) with the conditional equilibrium
estimate for the fast component to give:

\begin{proof}[{\bf Proof of Theorem~\ref{thm:main}}]
Let \(\varphi\in C_b^\a\) and $
\bar\varphi(x,y)
$ be defined by (\ref{barphi}).
It follows from Lemma~\ref{lem:estou} that
\(
\bar\varphi\in C_b^{2+\beta,2+\beta}.
\)
As a direct result of Proposition~\ref{prop:zcon}, we obtain
\begin{equation*}
\left|\E\varphi(Z_t^\eps)-\E\bar\varphi(X_t^\eps,Y_t^\eps)\right|\leq C_{T,\varphi}\,\sqrt{\eps}+C_\varphi\,\mathrm e^{-\kappa t/\eps}\leq C_{T,\varphi}\,\eps^{\beta/2}+C_\varphi\,\mathrm e^{-\kappa t/\eps}.
\end{equation*}
On the other hand, by Proposition~\ref{prop:slow} we have
\[
\left|\E\bar\varphi(X_t^\eps,Y_t^\eps)-\E\bar\varphi(\bar X_t,\bar Y_t)\right|
\leq C_{T,\varphi}\,\eps^{\beta/2}.
\]
Combining these estimates, we conclude that
\[
\left|\E\varphi(Z_t^\eps)-\E\bar\varphi(\bar X_t,\bar Y_t)\right|
\leq C_{T,\varphi}\,\eps^{\beta/2}+C_\varphi\,\mathrm e^{-\kappa t/\eps},
\]
which proves \eqref{eq:mainest}.

To identify the limiting random variable, enlarge the
probability space, if necessary, by introducing a standard Gaussian
random variable \(\xi\sim\cN(0,\Id_d)\), independent of
\((\bar X_t,\bar Y_t)\), and define
\begin{equation*}
Z_t:=\bar Y_t+N^{1/2}(\bar X_t)\xi.
\end{equation*}
Conditionally on \((\bar X_t,\bar Y_t)\), we have
\[
\Law(Z_t\mid\bar X_t,\bar Y_t)=\cN(\bar Y_t,N(\bar X_t))=\mu^{\bar X_t,\bar Y_t},
\]
where $\mu^{x,y}$ is given by (\ref{muxy}).
As a result,
\begin{equation*}
\E\varphi(Z_t)=\E\left(\int_{\R^d}\varphi(z)\,\mu^{\bar X_t,\bar Y_t}(\dif z)\right)
=\E\bar\varphi(\bar X_t,\bar Y_t),
\end{equation*}
and
\[
\left|\E\varphi(Z_t^\varepsilon)-\E\varphi(Z_t)\right|
\leq C_{T,\varphi}\,\varepsilon^{\beta/2}+C_\varphi\,\mathrm e^{-\kappa t/\varepsilon}.
\]
The convergence for arbitrary bounded continuous test functions, and
hence the weak convergence stated in \eqref{eq:jointcon}, follows by the standard
  approximation argument.
\end{proof}

\begin{remark}[Joint formulation]
The fixed-time convergence stated in Theorem~\ref{thm:main} can also be formulated jointly with the
slow position. More precisely, for every
$\psi\in C_b^{\a,2+\beta}$,
 the same
argument of the proof gives
\begin{align*}
&\left|
\E\psi(Z_t^\varepsilon,X_t^\varepsilon)
-
\E\left(
\int_{\mathbb R^d}
\psi(z,\bar X_t)\,
\mu^{\bar X_t,\bar Y_t}(\mathrm dz)
\right)
\right|\\
&\qquad\leq
C_{T,\psi}\,\varepsilon^{\beta/2}
+
C_\psi\,\mathrm e^{-\kappa t/\varepsilon},
\qquad t\in(0,T].
\end{align*}
Indeed, one applies the frozen Ornstein--Uhlenbeck semigroup (\ref{sem}) to
$z\mapsto\psi(z,x)$, treating $x$ as a parameter, and then applies
the slow pair control to
\[
\bar\psi(x,y)
:=
\int_{\mathbb R^d}\psi(z,x)\,\mu^{x,y}(\mathrm dz).
\]
The additional derivatives in $x$ are controlled by the assumed
regularity of $\psi$ and the same Gaussian-kernel estimates used
above.

Consequently, for every fixed $t>0$, we have
\[
(Z_t^\varepsilon,X_t^\varepsilon)
\Longrightarrow
\bigl(
\bar Y_t+N^{1/2}(\bar X_t)\xi,\bar X_t
\bigr),
\]
where $\xi\sim\mathcal N(0,\mI_d)$ is independent of
$(\bar X_t,\bar Y_t)$.
This formulation retains the dependence between the fluctuation
and the limiting position.
\end{remark}

\vspace{2mm}

\noindent{\bf\large Declarations}

\vspace{1mm}
\noindent{\bf Conflict of interest} The authors declared that they have no conflict of interest to this work.

\noindent{\bf Data availibility} Data sharing not applicable to this article as no datasets were generated or analysed during
the current study.

\end{document}